\documentclass{article}
\usepackage{ggw-arXiv}
\usepackage{amssymb}
\usepackage{amsmath}
\usepackage{pdflscape}
\usepackage{graphicx}
\usepackage{caption}
\usepackage{eufrak}

\theoremstyle{theorem}

\newtheorem{proposition}{Proposition}
\newtheorem{conj}{Conjecture}

\theoremstyle{definition}

\newtheorem*{remarks}{Remarks}

\begin{document}
\title{Universal Linear Mean Relationships in All Polynomials}
\markright{Universal Mean Relationships}
\author{Gregory Gerard Wojnar \\ Daniel Sz. Wojnar  \\ Leon Q. Brin}
 
\maketitle

\begin{abstract}
In any cubic polynomial, the average of the slopes at the $3$ roots is the
negation of the slope at the average of the roots.  In any quartic, the
average of the slopes at the $4$ roots is twice the negation of the slope at
the average of the roots.  We generalize such situations and present a
procedure for determining all such relationships for polynomials of any
degree.  E.g., in any septic $f$,  letting $\overline{f}_n$ denote the mean
$f$ value over all zeroes of the derivative $f^{\left(n\right)}$, it holds
that $37$ $\overline{f}_1-150$
$\overline{f}_3+200\,\overline{f}_4-135\,\overline{f}_5+48\,%
\overline{f}_6=0$; and in any quartic it holds that $5\, \overline{f}_1-6$
$\overline{f}_2+1\,\overline{f}_3=0$.  Having calculated such relationships in all dimensions up to 49, in all even dimensions there is a single relationship, in all odd dimensions there is a two-dimensional family of relationships.  We see connections to Tchebyshev, Bernoulli, \& Euler polynomials, and Stirling numbers.
\end{abstract}

This paper started with (a) the
observation that the quadratic formula essentially provides the roots to be
$x=\mu\pm\sigma\,$where $\mu$ is the mean of the two roots and where $\sigma$
is the standard deviation of the two roots considered equally likely, (b) the
observation that the slopes at the roots are
$\pm\sqrt{\textit{Discriminant}}=\pm 2\,a\sigma$, and (c) the trivial
observation that the average of the slopes at the roots equals the slope at
the average of the roots. \ Curiosity took us next to cubics, to find that
the Cardano-Tartaglia cubic formula can be perceived as providing the roots
to be
$r_{_k}=\mathbb{E}+\omega^k\overset{\rightharpoonup}{{\,T_{_{+}}}}
+\omega^{-\,k}\overset{\rightharpoonup}{{\,T_{_{-}}}}$ where
$\omega=-\frac{1}{2}+i\frac{\sqrt{3}\,}{2}$ is a primitive cube root of
unity, $k\in\left\{0,1,2\right\}$, and where
$\overset{\rightharpoonup}{{\,T_{_\pm}}}:=\sqrt[3]{\frac{%
\mathbb{W}}{2}\pm\sqrt{\left(\frac{\mathbb{W}}{2}\right)^2-\left(\frac{%
\mathbb{V}}{2}\right)^3}}\,$with $\mathbb{V}$ being the variance of the $3$
roots, $\mathbb{W}$ being the $3^{\text{rd}}$ central moment (sometimes
referred to as the \emph{unscaled skewness}), and $\mathbb{E}\equiv\mu$ being the
expectation of the roots, considered as being equally likely.  Moreover, the
coordinates of the cubic's inflection point are
$\left(\mathbb{E},\,-a\,\mathbb{W}\right)$, and the slope at the inflection
point is the negation of the mean slope at the $3$ roots.  Indeed, the
inflection point slope will be $-\frac{3}{2}\,a\mathbb{V}$. \ \ Next we saw
that the mean slope at the $4$ roots of a quartic is the negation of twice
the slope at the point $\left(\mathbb{E},\,f\left(\mathbb{E}\right)\right)$
$\ldots$ and the hunt was on to find all such relationships!  It was not
just mean slope relationships either-- e.g., one can of course note that the
mean of a cubic's function values at the roots of the first derivative equals
the `mean' of the function's (sole) value at the root of the second derivative, viz.
$-a\mathbb{W}$. \ What other relationships lie waiting to be discovered? Have we only seen the tip of an iceberg?
Perhaps the reader is starting to suspect that such relationships are really
typical.

\section{The General Question}

We establish some notations.  For any degree $D$ polynomial $f:\mathbb{C}\to\mathbb{C}$ let
$\mathcal{R}\equiv\mathcal{R}_{_f}$ denote the family ($\equiv$ multiset) of $D$ roots, and let
$\mathcal{R}^{\left(\rho\right)}\equiv\mathcal{R}^{\left(\rho\right)}_{_f}$
denote the family of $\left(D-\rho\right)\,$roots of the $\rho^{\text{th}}$
derivative $f^{\left(\rho\right)}$  (sometimes using primemarks
$\left[^{\,\,\prime}\right]$ for lower orders); we also write $|\mathcal{R}_f|=D$ (even if some root(s) have multiplicity greater than $1$).  Denote the average value of
the function on the $\rho^{\text{th}}$ derivative roots family as
$\overline{{f\left(\mathcal{R}^{\left(\rho\right)}\right)}}$, and similarly
use 
$\overline{{f^{\left(\delta\right)}\left(\mathcal{R}^{\left(\rho\right)}
\right)}}$ for the average value of the $\delta^{\text{ th}}$ derivative of
$f$ over the same roots family.  Sometimes we will emphasize the degree of
$f$ by explicitly writing $f_{_D}$, and we put $\varphi_{_{D\delta\rho}}:=$
$\overline{{f_{_D}^{\left(\delta\right)}\left(\mathcal{R}^{\left(\rho\right)}
\right)}}$ .

Our initial results mentioned above suggest that we seek to determine 
$\varphi_{_{D\delta\rho}}$ for all $D,\,\delta,\,$\& $\rho$, and that we seek
relationships among various $\varphi_{_{D\delta\rho}}$, of the form
$\sum_\rho\alpha_{_\rho}\,\varphi_{_{D\delta\rho}}$ $=0$.  (Other linear
combinations of the $\varphi_{_{D\delta\rho}}$ that sum over $D$ or over
$\delta$ are unnatural because of dimensionality considerations.) \\[1\baselineskip]
\noindent
\emph{Proof Routes}\\[-0\baselineskip]
\noindent
In cases where the cardinality of the roots family
$\mathcal{R}^{\left(\rho\right)}$ is $2$ or $3$, we can compute by brute
force since the values of the roots will be given by either the quadratic or
cubic formula.  Perhaps this method could even be pushed to the case of a
family of $4$ roots via the Ferarri quartic formula, but the level of
complexity is greatly increased.  In any case, for root families of size $5$
or greater, we need some alternative route. \\

Again we benefit from establishing notations.  We shall use what we refer to
as the quasi-binomial representation of polynomials, expressing coefficients
in terms of averaged symmetric polynomials in the roots.  For example, a
general cubic is represented as
$f\left(x\right)=a\left(x^3-3\,\overline{r}\,x^2+3\,\overline{\overline{r}}\,
x-\overline{\overline{\overline{r}}}\right)$ where it turns out that
$\overline{r}$ is the average of the $\binom{3}{1}$ roots, $\overline{\overline{r}}$ is
the average of the $\binom{3}{2}\,$products of pairs of roots, and
$\overline{\overline{\overline{r}}}\,$ is the `average' of the $\binom{3}{3}$
product of triplets of roots. \ With these, the earlier-mentioned
Cardano-Tartaglia cubic formula can be computed easily via
$\mathbb{E}=\overline{r},\,\mathbb{V}=2\left(\overline{r}^{\,2}-\overline{%
\overline{r}}\right),\,$and
$\mathbb{W}=2$~$\overline{r}^{\,3}-3\,\overline{r}\,\overline{\overline{r}}\,%
+\overline{\overline{\overline{r}}}$. In general we write 
$f_D(x)=\sum\limits_{\substack{i\,+\,j\,\,=\,D\\0\,\leq\,i,\,j\,\,\leq\,D}}\left(-1%
\right)^i \binom{D}{i\quad j} \,\overline{\,\,\,r}\!\!\!\!\!^{^i}\,\,\,\, x^j $  where
$\,\overline{\,\,\,r}\!\!\!\!\!^{^i}\,\,\,\,$ denotes that quasi-binomial
parameter with $i$-many bars, $\binom{D}{i\quad j}=\frac{D!}{i!\,j!}$, and we write
$\overset{\rightharpoonup}{\mathcal{R}}$ for the \emph{ordered} family $\left( \overline{r},\ \overline{\overline{r}}, \ \overline{\overline{\overline{r}}},\ \ldots\right)$. Notice that
$f^{\,\prime}\left(x\right)=3\cdot
a\left(x^2-2\,\overline{r}\,x+\overline{\overline{r}}\right)\,$is just $3$
times the quasi-binomial representation of a generic quadratic.  This is
typical:  with quasi-binomial representations of polynomials,
$f^{\,\prime}_{_D}=D\cdot f_{_{D\,-\,1}}$, i.e. taking the derivative coincides with truncating the highest order term from the quasi-binomial family $\overset{\rightharpoonup}{\mathcal{R}}$; in other words, for a given quasi-binomial parameter family $\overset{\rightharpoonup}{\mathcal{R}}$, we obtain a finite Appell sequence of derived functions.  Also, in terms of the
quasi-binomial parameters, the statistically-presented version of the
quadratic formula is simply
$x=\overline{r}\pm\sqrt{\overline{r}^{\,\,2}-\overline{\overline{r}}\,}$. \\

It remains instructive to establish that the slope of a cubic at its
inflection point is the negated mean of the slopes at the three roots, via the
route of brute force computation.  We want to evaluate $f'$ at the
three roots
$r_{_k}=\mathbb{E}+\omega^k\overset{\rightharpoonup}{{\,T_{_{+}}}}+
\omega^{-\,k}\overset{\rightharpoonup}{{\,T_{_{-}}
}}$(for~$k\in\left\{0,\,1,\,2\right\}$): \ \ $f'\left(r_{_k}\right)=3\cdot
a\left(r_{_k}^2-2\,\overline{r}\,r_{_k}+\overline{\overline{r}}\right)$. \
For this we will use \\[-0.75\baselineskip]
\begin{multline*}
r_{_k}^2= \left(\mathbb{E}+\omega^k\overset{\rightharpoonup}{{\,
T_{_{+}}}}\ +\omega^{-\,k}\overset{\rightharpoonup}{{\,T_{_{-}}}}\right)^2= \\ 
\overline{r}\,^2+\omega^{2\,k}\overset{\rightharpoonup}{{\,
T_{_{+}}}}^2+\omega^{-2\,\,k}\overset{\rightharpoonup}{{\,
T_{_{-}}}}^2+2\left(\overline{r}\left(\omega^k\overset{
\rightharpoonup}{{\,T_{_{+}}}}\, +\omega^{-\,k}\overset{\rightharpoonup}{{\,T_{_{-}}}}\right)+
\omega^k\overset{\rightharpoonup}{{\,T_{_{+}}}}\,\cdot\omega^{-\,k}\overset{\rightharpoonup}{{\,T_{_{-}}}}\right)\text{.}
\end{multline*}
Now averaging over $k\in\left\{0,1,2\right\}$ we encounter convenient terms such
as \linebreak $\left(\omega^{2\cdot 0}+\omega^{2\cdot 1}+ \omega^{2\cdot
2}\right)\overset{\rightharpoonup}{{\,T_{_{+}}}}^2\,=\,\left(
\omega^0+\omega^2+\omega^1\right)\overset{\rightharpoonup}{{\,
T_{_{+}}}}^2=0$, and $\left(\,\omega^0+\omega^1+\omega^2\right)\overset{\rightharpoonup}{{\,
T_{_{+}}}}=0$, and three copies of $1\cdot
\overset{\rightharpoonup}{{\,T_{_{+}}}}
\overset{\rightharpoonup}{{\,T_{_{-}}}}$.  Hence we obtain 
$\overline{r_{_k}^2}=\,\overline{r}\,^2+2\,
\overset{\rightharpoonup}{{\,T_{_{+}}}}
\overset{\rightharpoonup}{{\,T_{_{-}}}}$.  From the definitions, we
further simplify \ $\overset{\rightharpoonup}{{2\,T_{_{+}}}}
\overset{\rightharpoonup}{{\,T_{_{-}}}}=\mathbb{V}$. Of course $\,\overline{r_{_k}}=\mathbb{E}=\overline{r}$.  Together
we now have $\overline{f'\left(r_{_k}\right)}=3\cdot
a\left(\mathbb{E}^2+\mathbb{V}-2\,\overline{r}\,\overline{r}+\overline{
\overline{r}}\right)=3\cdot
a\left(-\,\overline{r}\,^2+2\left(\overline{r}^{\,\,2}-\overline{
\overline{r}}\,\right)+\overline{\overline{r}}\right)\,=$ $3\cdot
a\left(\,\overline{r}\,^2-\overline{\overline{r}}\,\right)=\frac{3}{2}\,a
\mathbb{V}$.  Whereas the inflection point occurs at $x=\overline{r}$, we
also compute $f'\left(\overline{r}\right)=3\cdot
a\left(\,\overline{r}\,^2-2\,\overline{r}\,\,\overline{r}\,+\overline{
\overline{r}}\right)=-\frac{3}{2}\,a\mathbb{V}$, thus establishing our goal. \\

Looking back, our efforts were aided here by the fact that
$\frac{\omega^0+\omega^1+\omega^2}{3}=0$, something particular to the case of
cube roots-- something that we won't have available for general cases.  What
we \emph{will} have in the general case is computing means of powers of the roots,
and here is our key to higher degree proofs:  we want to be able to express
powers of the roots in terms of the quasi-binomial parameters,  i.e. in
terms of the mean elementary symmetric polynomials in the roots. 
Exemplifying this in the case of $3$ roots, consider: \\[-1\baselineskip]
$$\left(\frac{1}{3}\sum r_{_k}\right)^2=\frac{1}{3}\left\{\frac{1}{3}\sum
r_{_k}^{\,2}+\frac{2}{3}\sum r_{_k}r_{_{k'}}\right\}\,\text{ whence}$$ 

\begin{equation}\label{eqn:rsquaredbar}
\overline{r_{_k}^{\,2}}\equiv \frac{1}{3}\sum r_{_k}^{\,2}=
3\,\overline{r}^{\,2}-2\,\overline{\overline{r}}\, \
=\mathbb{V}+\mathbb{E}^2\text{ .}
\end{equation}
\noindent
For any (multi)set $\mathcal{Z}$ of numbers, the elementary symmetric polynomials on $\mathcal{Z}$ are defined as $e_i(\mathcal{Z})\equiv e_i := \sum\limits_{\substack{A\subseteq\mathcal{Z} \\ |A|=i}}\  \prod\limits_{z \in A} z$ \quad (for $i=0\ldots|\mathcal{Z}|; e_0=1$);
 e.g. if $\mathcal{Z}=\mathcal{R}$ as above with $|\mathcal{R}|=3$, then $e_{_2}$ is the sum of all $\binom{3}{2}$
products of taking $2$ roots at a time, and $e_{_1}$ is the sum of all
$\binom{3}{1}$ `products' of taking $1$ root at a time.  Also define the power sums on $\mathcal{Z}$ per $p_i(\mathcal{Z})\equiv p_i :=\sum\limits_{z \in \mathcal{Z}} z^i$; $p_0:=|\mathcal{Z}|$.  With these definitions, equation (\ref{eqn:rsquaredbar}) above is equivalent to  $p_{_2}=\sum
r_{_k}^{\,2}=e_{_1}^2-2 e_{_2}$.  Such effort to
determine the sum of powers (or average of powers) is an established result,
the Girard (1629) and Waring (1762) formula
\cite{gould} which is very closely connected to Newton's
identities (ca.
1687) \cite{wikipolynomials}.  Both are based upon the
fact that for a (multi)set $\mathcal{Z}$ with $|\mathcal{Z}|=n$ it holds that
$\sum\limits_{\substack{i\,+\,j\,\,=\,n\\0\,\leq\,i,\,j\,\,\leq\,n}}\left(-1%
\right)^jp_{_j}(\mathcal{Z})e_{_i}(\mathcal{Z})=0$.  (Note that
$e_{_i}\left(\mathcal{R}\right)=\binom{n}{i}\,\overline{\,\,\,r}\!\!\!\!\!^{^i}\,\,\,\,$.) This relationship can either be solved for
the $e_{_i}$ or for the $p_{_j}$.  Solving for an $e_{_k}$ one obtains Newton's recursive
identities $e_{_k}=\frac{1}{k\,} \, \sum\limits_{\substack{i\,+\,j\,\,=\,k\\0\,<\,\,j\,\,\leq\,k}}\left(-1%
\right)^{j\,+\,1}p_{_j}e_{_i}$. Solving instead for $p_{_j}$, one obtains
the Girard-Waring formula.  To state the Girard-Waring formula compactly, 
Put $\boldsymbol{\mathfrak{n}}:=\{1, 2, \ldots , n\}$ 
and 
consider the $n$-tuples of natural numbers,
$\overset{\rightharpoonup}{\kappa} \equiv\left(k_{_{\,i}}\right)_{_{i\,
\in\,\boldsymbol{\mathfrak{n}}}}\in\mathbb{N}^n$ \  ($0\in\mathbb{N}$), and define
$\mathcal{K}_{_{\boldsymbol{\mathfrak{n}} j}}:=\left\{\overset{\rightharpoonup}{\kappa}\in
\mathbb{N}^n\,\Big|\,\,\left\|\overset{\rightharpoonup}{\kappa}\right
\|=j\right\}$ with
$\left\|\overset{\rightharpoonup}{\kappa}\right\|:=\sum\limits_{i\in \boldsymbol{\mathfrak{n}}}i\,k_{_{\,
i}}$, with $n$ being the number of data (or root)
values. Note that for $n\ge j, \ \mathcal{K}_{_{\boldsymbol{\mathfrak{n}} j}}$ is isomorphic to $\mathcal{K}_{_{
\large \boldsymbol{\mathfrak{j}} j}} =:\mathcal{K}_j$ which is the set of integer
partitions of $j$; the only difference (which is insignificant) is that elements of $\mathcal{K}_{_{\boldsymbol{\mathfrak{n}} j}}$ may have trailing zeroes in the partition.  E.g., $(3,0,0,1,0,0,0)\in \mathcal{K}_7$ denotes the partition $1+1+1+4$ of $7$. The Girard-Waring formula is that \ 
$p_{_j}=\,\,\sum\limits_{\overset{\rightharpoonup}{\kappa}\,\,\in\,
\mathcal{K}_{_j}}\gamma_{_{\,\overset{\rightharpoonup}{\kappa}}}\,\,\prod\limits_{i\,\in\,\boldsymbol{\mathfrak{n}}}\,\,e_{_i}\,^{k_{_i}}$, \  where \ $\gamma_{_{\,\overset{\rightharpoonup}{\kappa}}}:={j\,\left(-1\right)^j\,\frac{
\left(-1\right)^{\left|\overset{\rightharpoonup}{\kappa}\right|}\,}{
\left|\overset{\rightharpoonup}{\kappa}\right|}\binom{\left|
\overset{\rightharpoonup}{\kappa}\right|}{\overset{
\rightharpoonup}{\kappa}}}\,$, where
$\left|\overset{\rightharpoonup}{\kappa}\right|:=\sum\limits_{i\,\in\,
\boldsymbol{\mathfrak{n}}}k_{_i}$  and where
$\binom{\left|\overset{\rightharpoonup}{\kappa}\right|}{
\overset{\rightharpoonup}{\kappa}}=\frac{\,\left|\overset{
\rightharpoonup}{\kappa}\right|\,!\,}{\,\,\,\,\prod\limits_{i\,\in\,\boldsymbol{\mathfrak{n}}}k_{_i}!\,}\,
$ is the multinomial coefficient over the family of subindices $k_{_{\,i}}$
in $\overset{\rightharpoonup}{\kappa}$. (See Gould
\cite{gould}).  (The $\gamma_{_{\,\overset{\rightharpoonup}{\kappa}}}$ are given as sequence A210258
in Sloane's OEIS \cite{sloane:oeis}.) From our perspectives it
will become more natural to represent this relationship by replacing 
$p_{_j}$ and $e_{_i}$ by their normalized forms $\overline{p_{_j}}=p_{_j}/n$ and 
$\overline{\,e_{_i}}\,=\,\overline{\,\,\,r}\!\!\!\!\!^{^i}\,\,\,\,=\,e_{_i}
\big/\binom{n}{i}$, admittedly superficial changes to
Girard-Waring.  We thus obtain \quad
$\overline{p_{_j}}=\,\,\,\sum\limits_{\overset{\rightharpoonup}{
\kappa}\,\,\in\,\mathcal{K}_{_j}}c_{_{\,\overset{\rightharpoonup}{\kappa}}}\,\,\prod\limits_{i\,\in\,\boldsymbol{\mathfrak{n}}}\,\,\overline{\,\,\,r}\!\!\!\!\!^{^i}\,\,\,\,^{\,k_{_i}}$  where
\begin{equation}\label{cKappa}
 c_{_{\,\overset{\rightharpoonup}{\kappa}}}:=\frac{j\,\left(-1\right)^j\,
}{n}\frac{\left(-1\right)^{\left|\overset{
\rightharpoonup}{\kappa}\right|}\,}{\left|\overset{
\rightharpoonup}{\kappa}\right|}\binom{\left|\overset{
\rightharpoonup}{\kappa}\right|}{\overset{\rightharpoonup}{
\kappa}}\,\,\prod\limits_{i\,\in\,\boldsymbol{\mathfrak{n}}}\binom{n}{i}^{k_i}\text{ .}
\end{equation}
\noindent
We emphasize that for any partition of the power $j$, i.e. $\forall \left(k_i\right)_{i \in \boldsymbol{\mathfrak{n}}} \in \mathcal{K}_j$, the coefficient of the corresponding term $\prod\limits_{i \in \boldsymbol{\mathfrak{n}}}\,\overline{\,\,\,r}\!\!\!\!\!^{^i}\,\,\,\,^{\,k_{_i}}$ is given by this formula for $c_{_{\,\overset{\rightharpoonup}{\kappa}}}$.  By holding fixed all but one of the many parameters in equation (\ref{cKappa}), one obtains many sequences of coefficients.  E.g., the coefficients of $\overline{t}^{\,\ell}\ \overline{\overline{\overline{t}}}^{\,2}$ for a family of $n=3$ gives sequence $(1, 7, 36, 162, \ldots)$ (OEIS A080420 \ \cite{sloane:oeis}) with formula $\frac{j(j-5)}{18}\, 3^{j-5}$ (degree $j=\ell+6$).  E.g., the coefficients of $\overline{u}^{\,\ell}\, \overline{\overline{u}}$ for a family of $n=4$ gives sequence $(3, 18, 96, 480, \ldots)$ (not in OEIS) with formula $6\,j\, 4^{j-3}$ (degree $j=\ell+2$).  
Tabulated results are in Tables $\overline{\text{GW}}.\boldsymbol{\mathfrak{n}}$=2 through
$\overline{\text{GW}}.\boldsymbol{\mathfrak{n}}$=7 and in Tables $\overline{\text{GW}}.\deg$=2 through
$\overline{\text{GW}}.\deg$=7, at the end of the paper.  We particularly note that the coefficients in Table $\overline{\text{GW}}.\boldsymbol{\mathfrak{n}}$=2 are exactly the coefficients of the Tchebyshev polynomials of the first kind.  Thus our Tables $\overline{\text{GW}}.\boldsymbol{\mathfrak{n}}$ are generalizations of these Tchebyshev polynomials.

\section{Example}

An illustrative example is to consider the long-term goal of determining
relationships among \ $\varphi_{_{4,0,\rho}}\equiv \overline{{f_{_4}\left(\mathcal{R}^{\left(\,\rho\right)}\right)}}$ for
$\rho\in\left\{1,2,3\right\}$.  A better example would address
$\varphi_{_{6,0,\rho}}\equiv$
$\overline{{f_{_6}\left(\mathcal{R}^{\left(\,\rho\right)}\right)}}$ for
$\rho\in\left\{1,2,3,4,5\right\}$, since with $f'$ being a quintic we have no
hope of algebraically knowing those roots, but the larger example demands much
lengthier efforts.$\,\,$So begin with
$f_{_4}\left(x\right)=a\left(x^4-4\,\overline{r}\,x^3+6\,\overline{%
\overline{r}}\,x^2-4\,\overline{\overline{\overline{r}}}\,x+\overline{%
\overline{\overline{\overline{r}}}}\right)$. \ For $\rho=3$, we have$\,$the
cardinality $1$ family $\mathcal{R}'''=\left(\overline{r}\right)$, thus
quickly we obtain

$$\varphi_{_{4,0,3}}=f_{_4}\left(\overline{r}\right)=-a\left(3\,\overline{r}^{%
\,4}-6\,\overline{\overline{r}}\,\overline{r}^{\,2}+4\,\overline{\overline{%
\overline{r}}}\,\overline{r}\,-\overline{\overline{\overline{\overline{r}}}}%
\right).$$
\noindent
For $\rho=2$, let the family of the roots of the $2^{\text{nd}}$ derivative
be $\mathcal{R}''=\left(s_{_1},\,s_{_2}\right)$, and toward determining \ \
$\varphi_{_{4,0,2}}\equiv\overline{{f_{_4}\left(\mathcal{R}^{\left(2\right)}%
\right)}}\,$ let us consider
$f_{_4}\left(s_{_i}\right)= \linebreak a\left(s_{_i}^4-4\,\overline{r}\,s_{_i}^3+6\,%
\overline{\overline{r}}\,s_{_i}^2-4\,\overline{\overline{\overline{r}}}\,%
s_{_i}+\overline{\overline{\overline{\overline{r}}}}\right)$. \ From our mean
versions of  the Girard-Waring formula (presented as Table $\overline{\text{GW}}.\boldsymbol{\mathfrak{n}}$=2 at the end of
the paper) we have:  
\begin{center}
$\overline{s^2}\,=\,{2\,\,\overline{s}^{\,\,2}-1\,\overline{\overline{s}}}$, \ \ 
$\overline{s^3}\,=\,{4\,\,\overline{s}^{\,\,3}-3\,\overline{s}\,\,\overline{%
\overline{s}}}$, \  and \ 
$\overline{s^4}\,=\,8\,\overline{s}^{\,\,4}-8\,\overline{s}^{\,2}\,\overline{%
\overline{s}}\,+1\,\overline{\overline{s}}^{\,\,2}$. 
\end{center} 
\noindent
These give us

\begin{tabbing}
\hspace*{0.256in}\=\hspace{0.5in}\=\hspace{0.5in}\=\kill
\>{} $\overline{f_{_4}\left(s_{_i}\right)}=a\biggl(\left(8\,\overline{s}^{\,\,4}-8%
\,\overline{s}^{\,2}\,\overline{\overline{s}}\,+1\,\overline{\overline{s}}^{\,%
\,2}\right)-4\,\overline{r}\,\left({4\,\,\overline{s}^{\,\,3}-3\,\overline{s}%
\,\,\overline{\overline{s}}}\right)\,+$\\
\>{}\>{}\>{$\,6\,\overline{\overline{r}}\left(2\,\,\overline{s}^{\,\,%
2}-1\,\overline{\overline{s}}\right)-4\,\overline{\overline{\overline{r}}}\,%
\overline{s}+\overline{\overline{\overline{\overline{r}}}}\biggr)$. }
\end{tabbing}
\noindent 
We would be stuck here were it not for the fact that the $s_{_i}$ are roots
of a derivative of $f$, and thus we are blessed with the facts that
$\overline{s}=\overline{r}\,\,$and$\,\,{\overline{\overline{s}}\,=\overline{%
\overline{r}}}$. This is the key issue enabling general degree success. 
Thus we substantively extend Girard-Waring by considering 
$\overline{p_{_j}}\left(\mathcal{R}^{\left(m\right)}_{_f}\right)$ and
$\overline{\,e_{_i}}\,\left(\mathcal{R}_{_f}\right)$ where
$\mathcal{R}^{\left(m\right)}_{_f}$ is the roots family of the derivative
$f^{\left(m\right)}$.  Simplifying our current degree $4$ expression, we
now have \\[-0.75\baselineskip]
$$\varphi_{_{4,0,2}}= a\left(-8\,\,{\overline{r}}^{\,\,4}+16\,\,{\overline{r}}^{\,\,2}\,{
\overline{\overline{r}}}-4\,\,{\overline{r}}\,\,{\overline{\overline{
\overline{r}}}}-5\,\,{\overline{\overline{r}}}^{\,\,2}+1\,\,{\overline{
\overline{\overline{\overline{r}}}}}\,\right)\text{ .}$$

\noindent
For $\rho=1$, let the family of the roots of the $1^{\text{st}}$ derivative
be $\mathcal{R}'=\left(t_{_1},\,t_{_2},\,t_{_3}\right)$, \ and\linebreak toward
determining \ 
$\varphi_{_{4,0,1}}\equiv\overline{{f_{_4}\left(\mathcal{R}^{\left(1\right)}%
\right)}}\,$ \  let us consider \ 
$f_{_4}\left(t_{_i}\right)=\linebreak a\left(t_{_i}^4-4\,\overline{r}\,t_{_i}^3+6\,%
\overline{\overline{r}}\,t_{_i}^2-4\,\overline{\overline{\overline{r}}}\,%
t_{_i}+\overline{\overline{\overline{\overline{r}}}}\right)$. \ From our mean
versions of \ the Girard-Waring formula (Table $\overline{\text{GW}}.\boldsymbol{\mathfrak{n}}$=3) we have \ 
$\overline{t^2}\,=\,3\,\,\overline{t}^{\,\,2}-2\,\overline{\overline{t}}$, \ 
$\overline{t^3}\,=\,{9\,\,\overline{t}^{\,\,3}-9\,\overline{t}\,\,\overline{%
\overline{t}}\,+\overline{\overline{\overline{t}}}}$, and
$\overline{t^4}\,=\,27\,\overline{t}^{\,\,4}-36\,\overline{t}^{\,2}\,%
\overline{\overline{t}}\,+\,4\,\overline{t}\,\overline{\overline{%
\overline{t}}}\,\,+\,6\,\overline{\overline{t}}^{\,\,2}$.  These give us

\begin{tabbing}
\hspace*{0.256in}\=\hspace{0.5in}\=\kill
\>{}$\overline{f_{_4}\left(t_{_i}\right)}=a\biggl(\left(27\,\overline{t}^{\,\,%
4}-36\,\overline{t}^{\,2}\,\overline{\overline{t}}\,+\,4\,\overline{t}\,%
\overline{\overline{\overline{t}}}\,\,+\,6\,\overline{\overline{t}}^{\,\,2}%
\right)-\,$\\
\>{}\>{$4\,\overline{r}\,\left({9\,\,\overline{t}^{\,\,3}-9\,%
\overline{t}\,\,\overline{\overline{t}}\,+\overline{\overline{\overline{t}}}}%
\right)+\,6\,\overline{\overline{r}}\left(3\,\,\overline{t}^{\,\,2}-2\,%
\overline{\overline{t}}\right)-4\,\overline{\overline{\overline{r}}}\,%
\overline{t}+\overline{\overline{\overline{\overline{r}}}}\biggr)$. }
\end{tabbing}
\noindent
We would again be stuck here were it not for the fact that the $t_{_i}$ are
roots of a derivative of $f$, and thus we are blessed with the facts that
$\overline{t}=\overline{r}\,,\,\,\,{\overline{\overline{t}}\,=\overline{%
\overline{r}}},\,$and
\text{$\overline{\overline{\overline{t}}}\,=\overline{\overline{%
\overline{r}}}$}. Simplifying, we now have \\[-1.25\baselineskip]
$$\varphi_{_{4,0,1}}= a\left(-9\,\,{\overline{r}}^{\,\,4}+18\,\,{\overline{r}}^{\,\,2}\,{
\overline{\overline{r}}}-4\,\,{\overline{r}}\,\,{\overline{\overline{
\overline{r}}}}-6\,\,{\overline{\overline{r}}}^{\,\,2}+1\,\,{\overline{
\overline{\overline{\overline{r}}}}}\right)\text{ .}$$
\noindent
Henceforward we shall assume that the leading coefficient is $a=1$.  We have
summarized our
$\left(\varphi_{_{D,0,\rho}}\right)_{\rho\in\left\{1,2,\ldots,\,D-1\right\}}$
results for other degrees in Tables $\boldsymbol{\varphi.2}$ through $\boldsymbol{\varphi.7}$ at the end of the paper. \\

\noindent
\textbf{We summarize the above procedure.} \ For a given order $\rho$ of derivative,
consider the $\left(D-\rho\right)\,$roots of the derivative, and express the
normalized power sum means in terms of the derivative family's quasi-binomial
parameters, making use of our normalized variant of the Girard-Waring
formulas.  Then to evaluate the mean value of the original degree $D$
function over the derivative roots family, appropriately substitute these
mean power sum expressions in where the argument of the function occurs; this
takes advantage of the fact that averaging is a linear process, to wit, the
mean value of the polynomial function is the sum of the mean values of its
monomial components.  Next, taking advantage of the fact that the
quasi-binomial parameters of a family of derivative roots is the same (albeit
truncated) as the quasi-binomial parameters of the degree $D$ function, we
are able to simplify the expression of the $\varphi_{_{D,\delta,\rho}}$ to be
entirely in terms of the degree $D$ function's quasi-binomial (i.e.
normalized symmetric function) parameters. \\

With such procedure in hand, we are enabled to determine
general degree $D$ results \emph{ad libitum}. Tabulated results are in Tables
$\boldsymbol{\varphi.D}$ for $D=2$ through $D=7$, at the end of the paper. \\

Let us look a bit more closely at the results thus far obtained in the above
example.  We have:
\begin{tabbing}
\hspace{1.256in}\=\kill
\>{$\varphi_{_{4,0,1}}=$
$-9\,\,{\overline{r}}^{\,\,4}+18\,\,{\overline{r}}^{\,\,2}\,{\overline{%
\overline{r}}}-4\,\,{\overline{r}}\,\,{\overline{\overline{\overline{r}}}}-6\,%
\,{\overline{\overline{r}}}^{\,\,2}+1\,\,{\overline{\overline{\overline{%
\overline{r}}}}}$,}
\end{tabbing}

\begin{tabbing}
\hspace{1.256in}\=\kill
\>{$\varphi_{_{4,0,2}}=$
$-8\,\,{\overline{r}}^{\,\,4}+16\,\,{\overline{r}}^{\,\,2}\,{\overline{%
\overline{r}}}-4\,\,{\overline{r}}\,\,{\overline{\overline{\overline{r}}}}-5\,%
\,{\overline{\overline{r}}}^{\,\,2}+1\,\,{\overline{\overline{\overline{%
\overline{r}}}}}\,$,}
\end{tabbing}

\begin{tabbing}
\hspace{1.256in}\=\kill
\>{$\varphi_{_{4,0,3}}=$
$f_{_4}\left(\overline{r}\right)=-3\,\overline{r}^{\,4}+6\,\overline{%
\overline{r}}\,\overline{r}^{\,2}-4\,\overline{\overline{\overline{r}}}\,%
\overline{r}\,+1\,\,\overline{\overline{\overline{\overline{r}}}}$.}
\end{tabbing}

Observe that all of these have common terms
$-4\,\overline{\overline{\overline{r}}}\,\overline{r}\,+1\,\,\overline{
\overline{\overline{\overline{r}}}}$,  so there is more structure here than
what we have put our finger on. \ After sufficiently inspired inspection we
might realize the following unexpected relationship: \\[-0.5\baselineskip]
\begin{tabbing}
\hspace{0.256in}\=\hspace{0.5in}\=\hspace{0.5in}\=\hspace{0.5in}\=\kill
\>{}\>$5\,\,\varphi_{_{4,0,1}}\,-\,6\,\varphi_{_{4,0,2}}\,+\,1$
$\varphi_{_{4,0,3}}=0$ ,\\
i.e.\\[0.5\baselineskip]
\>{}\>{$5\,\overline{{f_{_4}\left(\mathcal{R}'\right)}}\,-6\,%
\overline{{f_{_4}\left(\mathcal{R}''\right)}}\,+1$
$\overline{{f_{_4}\left(\mathcal{R}'''\right)}}\,=0$ $\,\,$!}
\end{tabbing}
\noindent
\begin{remarks}
$\left(1\right)$ \ We desire a more systematic way, with less inspiration
required, to obtain such relationships.  In effect we are striving to solve
\text{$\alpha_{_1}\,\varphi_{_{4,0,1}}$}$+{\alpha_{_2}\,\varphi_{_{4,0,2}}}+%
\alpha_{_3}$ $\varphi_{_{4,0,3}}=0$, \ where the $\varphi_{_{4,0,\rho}}$ are
``vectors'' of linear combinations of the five ``basis elements'' \
$\left({\overline{r}}^{\,\,4},\,\,{\overline{r}}^{\,\,2}\,{\overline{%
\overline{r}}},\,\,{\overline{r}}\,\,{\overline{\overline{\overline{r}}}},\,\,%
{\overline{\overline{r}}}^{\,\,2},\,\,{\overline{\overline{\overline{%
\overline{r}}}}}\right)$. \ Thus the matter of finding all triplet
$\left(\alpha_{_1},\,\alpha_{_2},\,\alpha_{_3}\right)\,$solutions is a simple
linear algebra issue involving row reduction. \\[0.5\baselineskip]
\noindent
$\left(2\right)\,\,$At first glance, trying to solve
\text{$\alpha_{_1}\,\varphi_{_{4,0,1}}$}$+{\alpha_{_2}\,\varphi_{_{4,0,2}}}+%
\alpha_{_3}$ $\varphi_{_{4,0,3}}=0$ for the $\alpha_\rho$ seems like a
hopeless venture-- \ for this degree $4$ case, with $5$ basis elements we are
essentially trying to solve $5$ equations with only $3$ degrees of freedom
in our $\alpha$s.  But the fact that all three of the
$\varphi_{_{4,0,\rho}}$ have common terms
$-4\,\overline{\overline{\overline{r}}}\,\overline{r}\,+1\,\,\overline{%
\overline{\overline{\overline{r}}}}$ \ saves us:  if we can happen to
satisfy the other three basis elements with $\alpha$s such
that   $\sum\limits_{\rho\in\left\{1,2,3\right\}}\alpha_\rho=0$, \ then
the $\,\overline{\overline{\overline{r}}}\,\overline{r}\,$ and \
$\,\,\overline{\overline{\overline{\overline{r}}}}$ \ constraints will
automatically be satisfied.  Besides that, there is further structure within
our \ $\varphi_{_{4,0,\rho}}$ results-- observe that the coefficients of both
$\,\overline{r}^{\,4}$ and $\,\overline{\overline{r}}\,\overline{r}^{\,2}$
are in the proportion \
$\varphi_{_{4,0,1}}:\varphi_{_{4,0,2}}:\varphi_{_{4,0,3}}::9:8:3$. \ This
again increases our hope of finding at least one
$\left(\alpha_{_1},\,\alpha_{_2},\,\alpha_{_3}\right)\,$solution triplet. \\[0.5\baselineskip]
\noindent
$\left(3\right)$ \ Observe that: (a) the coefficients within the
$\varphi_{_{D,\delta,\rho}}$ are somewhat larger than the coefficients in the
eventual relationship
\text{$5\,\varphi_{_{4,0,1}}$}$-{6\,\varphi_{_{4,0,2}}}+{1\text{
}\varphi_{_{4,0,3}}}=0$; and (b) the number of basis elements inside each
$\varphi_{_{D,\delta,\rho}}$ is greater than the number of means
$\varphi_{_{D,\delta,\rho}}$ in our eventual relationship.  These are
typical situations.  Indeed, for relationships involving degrees up to $7$,
we sometimes see coefficients within some $\varphi_{_{D,\delta,\rho}}$ in the
tens of thousands, with as many as $15$ basis elements, yet the eventual
relationships remain small, with coefficients near $100$, and with $2$ to $6$
$\varphi_{_{D,\delta,\rho}}$ means involved.  The number of basis elements
for different degree polynomials is the number of integer partitions$\,$of
the degree (see OEIS  A$000041$ \cite{sloane:oeis}):
 
\begin{tabular}[t]{|@{}l|l|l|l|l|l|l|l|l|l|l|l|l|}
\hline
\hspace*{0.5cm}Degree&$2$&$3$&$4$&$5$&$6$&$7$&$8$&$9$&$10$&$11$&$12$&$13$\\
\hline
\,\# Basis Elements&$2$&$3$&$5$&$7$&$11$&$15$&$22$&$30$&$42$&$56$&$77$&$101$\\
\hline
\end{tabular}
\\[1\baselineskip]
This all strongly suggests that our proof procedure is unnecessarily
convoluted, and that some more natural and simple proof path is yet to be
found. \ Streamlined proofs still elude us. \\[0.5\baselineskip]
\noindent
$\left(4\right)$ \ Having noted in the chart above how quickly the number of
basis elements increases with increasing degree, we should be doubtful about
the prospects of finding
$\left(\alpha_{_1},\,\alpha_{_2},\ldots,\,\alpha_{_{D-1}}\right)$ solution
tuples for higher degree $D$ cases.  Our hope rests in there being ``special
circumstances'' similar to those noted in Remark $\left(2\right)\,$above. 
We shall find that such circumstances do hold. \\[0.5\baselineskip]
\end{remarks}
Before we give a resum\a'e of our complete results, we feel compelled to
present the striking degree $6$ result:  There is a unique solution
to $\sum\limits_{\rho\in\left\{1,\ldots,5\right\}}\alpha_{_\rho}\,\varphi_{_{6,0,%
\rho}}=0$, namely 
$$77\,\varphi_{_{6,0,1}} -120\,\varphi_{_{6,0,2}} +60\,
\varphi_{_{6,0,3}} -20\,\varphi_{_{6,0,4}}+3\, \varphi_{_{6,0,5}}=0\text{ .}$$
\\[-0.5\baselineskip]
\noindent
Perhaps equally surprising is that in degree $7$, even though constraints
from $15$ basis elements must be satisfied, we enjoy a $2$-dimensional
family of solution relationships.
\section{Graphical example in degree 4} See Figure \ref{fig:Quartic} at end of paper.

\section{Results}  The graphical example had us consider $\overline{{f'\left(\mathcal{R}\right)}}$ which is the average of the slopes at the roots, and this quantity is equal to the negation of twice the slope at the average of the roots.  Note that this latter quantity is invariant w.r.t. vertical translations, hence the average of the slopes at the roots does not depend upon the polynomial's constant term.  This is typical:

\begin{proposition}\label{prop:vertical_invariance}
If a horizontal line cuts a polynomial graph at as many points as the degree of the polynomial, the average of the slopes at the points of intersection is invariant w.r.t. modest vertical translations of the line.
\end{proposition}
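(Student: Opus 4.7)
The plan is to reduce the statement to the Girard--Waring machinery already developed in the paper. Let $f$ be a polynomial of degree $D$ with $f(x)=\sum_{k=0}^{D}a_{k}x^{k}$, and fix a horizontal line $y=c$ cutting the graph in $D$ points. Those points have $x$-coordinates $x_{1}(c),\ldots,x_{D}(c)$, which are precisely the roots of $g_{c}(x):=f(x)-c$. The slope at the $i$-th intersection is $f'(x_{i}(c))=g_{c}'(x_{i}(c))$, since $f$ and $g_{c}$ share a derivative. Writing out the sum and exchanging the order of summation gives
\[
\sum_{i=1}^{D}f'(x_{i}(c))\;=\;\sum_{k=0}^{D-1}(k+1)\,a_{k+1}\,p_{k}(c),
\]
where $p_{k}(c):=\sum_{i}x_{i}(c)^{k}$ is the $k$-th power sum of the intersection abscissae. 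So the proposition will follow once we show that $p_{k}(c)$ is independent of $c$ for every $k=0,1,\ldots,D-1$.

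Next I would record the elementary-symmetric picture. Because $g_{c}(x)$ and $f(x)$ differ only in their constant terms, their coefficients of $x^{j}$ for $j\ge 1$ coincide. Consequently, viewing $g_{c}(x)=a_{D}\prod_{i}(x-x_{i}(c))$ and comparing with the coefficients of $g_{c}$, the elementary symmetric polynomials $e_{j}(x_{1}(c),\ldots,x_{D}(c))=(-1)^{j}a_{D-j}/a_{D}$ for $j=1,\ldots,D-1$ are all independent of $c$; only $e_{D}=(-1)^{D}(a_{0}-c)/a_{D}$ carries any $c$-dependence. This is the key structural observation.

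Now invoke the Girard--Waring formula in the form already set up in the paper (or equivalently Newton's identities): for any index $k$, the power sum $p_{k}$ is a polynomial in $e_{1},\ldots,e_{k}$ only. In particular, for $k\le D-1$, the expression for $p_{k}(c)$ involves only $e_{1},\ldots,e_{D-1}$, none of which depend on $c$. Hence each of $p_{0}(c),p_{1}(c),\ldots,p_{D-1}(c)$ is constant in $c$, and so is the sum $\sum_{i}f'(x_{i}(c))$, proving the invariance.

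I do not expect a serious obstacle: the proof is essentially a dimensional observation that $f'$ has degree $D-1$, so summing its values across the $D$ roots of $f-c$ needs only power sums up through $p_{D-1}$, exactly the ones that miss the single elementary symmetric function ($e_{D}$) into which $c$ is absorbed. The only subtle point is the qualifier \emph{modest} in the statement: the algebraic identity in fact holds over $\mathbb{C}$ for all $c$, so the ``modest'' hypothesis is needed only to ensure that the $D$ roots of $f(x)-c$ remain real (equivalently, that the translated line continues to meet the real graph in $D$ real points counted with multiplicity), which is automatic for sufficiently small perturbations away from a configuration already having $D$ real intersections.
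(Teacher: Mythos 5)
Your proof is correct and complete, and it takes a genuinely different route from the paper's. The paper offers only a sketch: it perturbs the line by $\Delta h$, tracks the first-order change in each slope (each abscissa moves by roughly $\Delta h/f'(x_i)$, so each slope changes by roughly $f''(x_i)\,\Delta h/f'(x_i)$), and then checks by explicit algebra that these changes cancel for degrees $2$ through $7$, deferring the general case to the $k=2$ instance of its conjecture $\sum_{r\in\mathcal{R}}f^{(k)}(r)/f'(r)=0$. Your argument is instead global and uniform in the degree: translating the line alters only the constant term of $f(x)-c$, so $e_1,\ldots,e_{D-1}$ of the intersection abscissae are unchanged, and since $f'$ has degree $D-1$ the sum $\sum_i f'\bigl(x_i(c)\bigr)=\sum_{k=0}^{D-1}(k+1)a_{k+1}p_k(c)$ involves only $p_0,\ldots,p_{D-1}$, each of which Newton's identities (equivalently the paper's Girard--Waring setup, where partitions of $j\le D-1$ never contain the part $D$) express through $e_1,\ldots,e_{D-1}$ alone. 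What this buys: an actual proof in every degree rather than a degree-by-degree verification; an exact identity in $c$ rather than a first-order statement; and a rigorous justification of the paper's remark that the \emph{modest} hypothesis is dispensable once complex intersections and multiplicities are allowed, since the algebra is insensitive to reality of the roots. As a further payoff, differentiating your constancy statement $\sum_i f'\bigl(x_i(c)\bigr)=\text{const}$ with respect to $c$ at a configuration with simple roots gives $\sum_i f''(x_i)/f'(x_i)=0$, i.e. exactly the $k=2$ case of the paper's conjecture that the sketch implicitly relies on.
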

\begin{proof}
(Sketch) Given a vertical translation $\Delta h$, one can compute changes in the various slopes, ignoring terms that are higher order in $\Delta h$.  Straightforward algebra shows that the sum of all such slope changes is $0$ for degrees $2$ through $7$, and we are confident that the same method works in all degrees.
\end{proof}
The interpretation of "modest" in the proposition is that the vertical translation should not cross an extremum.  In fact when one accounts for multiplicities and computes, if necessary, complex-valued derivatives for complex-value roots, it is seen that the modesty condition is not a requirement. \\

Our attempts at an inductive proof of the general case above led to the following confident conjecture (supported by strong numerical evidence).  The general proof of proposition \ref{prop:vertical_invariance} would follow as a corollary of the $k=2$ case of the following. Note that the conjecture gives statements in $(\text{reciprocal root units})^{k-1}$:

\begin{conj}
In any polynomial of degree $\ge 2$ with roots set $\mathcal{R}$ having no repeated roots, it holds that
\\[-1.5\baselineskip]
$$\forall k \ge 2, \qquad \sum\limits_{r \in \mathcal{R}}\frac{f^{(k)}(r)}{f'(r)} = 0 \text{ .}$$
\end{conj}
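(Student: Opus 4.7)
The plan is to recognize the sum $\sum_{r \in \mathcal{R}} f^{\left(k\right)}\left(r\right)/f'\left(r\right)$ as the sum of residues of the rational function $f^{\left(k\right)}/f$, and invoke the classical fact that the total sum of residues (over the whole Riemann sphere) of any rational function is zero.  Concretely, for $f$ of degree $n$ with distinct roots $r_{_1},\ldots,r_{_n}$, the partial-fraction expansion gives, for any polynomial $P$ with $\deg P < n$,
\[
\frac{P\left(x\right)}{f\left(x\right)}\;=\;\sum\limits_{r \in \mathcal{R}}\frac{P\left(r\right)}{f'\left(r\right)\,\left(x-r\right)}\text{ ,}
\]
since at each simple pole $r$ the residue of $P/f$ equals $P\left(r\right)/f'\left(r\right)$; the no-repeated-roots hypothesis is precisely what guarantees that every pole is simple.

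Next I would expand both sides as Laurent series at $x=\infty$.  The right side is $\bigl(\sum_{r} P\left(r\right)/f'\left(r\right)\bigr)/x+O\bigl(1/x^{\,2}\bigr)$, while the left side is $O\bigl(1/x^{\,2}\bigr)$ whenever $\deg P \le n-2$.  Matching the $1/x$-coefficients then forces
\[
\sum\limits_{r \in \mathcal{R}}\frac{P\left(r\right)}{f'\left(r\right)}\;=\;0\text{ .}
\]
Equivalently, $P/f$ contributes no residue at infinity as soon as $\deg P \le n-2$, and the usual residue theorem finishes the job.

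To close the conjecture, specialize $P=f^{\left(k\right)}$, whose degree is $n-k$.  The hypothesis $k\ge 2$ is exactly what delivers $\deg P \le n-2$, so the displayed identity applies verbatim and yields the vanishing.  The main obstacle, such as it is, lies not in any delicate estimate but in recognizing the right framework: once one views the sum through the lens of partial fractions (or equivalently, Lagrange interpolation at the roots of $f$), the threshold $k\ge 2$ emerges naturally from degree bookkeeping, and the argument collapses to a single line.  If one later wished to drop the no-repeated-roots hypothesis, the simple-pole residue $P\left(r\right)/f'\left(r\right)$ would have to be replaced by a higher-order residue computation, but the sum-of-residues framework would still apply.
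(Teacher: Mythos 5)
Your argument is correct and complete, and it is worth stressing that the paper itself offers no proof of this statement: it appears there only as a conjecture ``supported by strong numerical evidence,'' reached after attempts at an inductive proof of the vertical-invariance proposition. Your route --- expanding $P/f$ in partial fractions over the simple roots, noting that the residue at a simple root $r$ is $P(r)/f'(r)$, and comparing coefficients of $x^{-1}$ in the expansion at infinity --- actually establishes the stronger classical fact (essentially the Euler--Jacobi vanishing theorem, or equivalently the observation that $\sum_{r\in\mathcal{R}}P(r)/f'(r)$ is, up to the leading coefficient of $f$, the top divided difference of $P$ at the $n$ roots, hence the coefficient of $x^{n-1}$ in its interpolant) that $\sum_{r\in\mathcal{R}}P(r)/f'(r)=0$ for every polynomial $P$ with $\deg P\le n-2$; specializing $P=f^{(k)}$ with $k\ge 2$ (the case $k>n$ being trivial since $f^{(k)}\equiv 0$) settles the conjecture. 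The bookkeeping is sound: the no-repeated-roots hypothesis makes every pole simple so the partial-fraction identity holds, the right-hand side is $\bigl(\sum_{r}P(r)/f'(r)\bigr)x^{-1}+O(x^{-2})$, and the left-hand side is $O(x^{-2})$ precisely when $\deg P\le n-2$, which is exactly where $k\ge 2$ enters. Beyond closing the conjecture, your $k=2$ case supplies the general proof of the vertical-invariance proposition that the paper only sketches for degrees $2$ through $7$, and your closing remark about higher-order residues correctly indicates how the no-repeated-roots hypothesis could be relaxed.
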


 \noindent 
Note that the above can be reinterpreted as:  $\forall \ell \ge 1$, the sum of relative rates is $ \sum\limits_{s \in \mathcal{S}_c}\frac{g^{(\ell)}(s)}{g(s)} = 0$ where $\mathcal{S}_c$ is the roots family of any antiderivative $\left(\int_0^x g \right)+c$ provided that $\mathcal{S}_c$ has no repeated roots and that $\deg(g)\ge 1$. \\

We now present some of our results. More completely see Tables $\boldsymbol{\alpha.4}$ through
$\boldsymbol{\alpha.6}$, as well as Tables $\boldsymbol{\varphi.2}$ through $\boldsymbol{\varphi.7}$, at the end of the
paper.

\begin{proposition}
The following is an exhaustive list of fundamental linear
relationships for degrees up to $8$, among means of a polynomial's values
when evaluated at roots of derivatives.

\begin{tabbing}
\hspace*{0.25in}\=\hspace{1.5in}\=\kill
\indent A. \ Degree$(f)=2$\>{}\>{N/A}
\end{tabbing}

B. \ Degree$(f)=3$
\\[-1.25\baselineskip]
\begin{tabbing}
\hspace*{0.25in}\=\hspace{0.4in}\=\hspace{2.5in}\=\kill
\>{$1 \,\, \varphi_{_{3,0,1}}-1\,\varphi_{_{3,0,2}}=0$}\>\>\textup{(a)}
\\[-0.25\baselineskip]
\end{tabbing}

C. \ Degree$(f)=4$
\\[-1.25\baselineskip]
\begin{tabbing}
\hspace*{0.25in}\=\hspace{0.4in}\=\hspace{2.5in}\=\kill
\>{ $5 \,\, \varphi_{_{4,0,1}}-6\,\varphi_{_{4,0,2}}+1\,\varphi_{_{4,0,3}}=0$}\>\>\textup{(b)}
\\[-0.25\baselineskip]
\end{tabbing}

D. \ Degree$(f)=5$
\\[-1.25\baselineskip]
\begin{tabbing}
\hspace*{0.25in}\=\hspace{0.4in}\=\hspace{2.5in}\=\kill
\>{ $1 \,\,\varphi_{_{5,0,1}}-\,\,3\,\varphi_{_{5,0,3}}+2\,\varphi_{_{5,0,4}}=0$} \>\>\textup{(c)}\\
\>{ $2 \,\,\varphi_{_{5,0,2}}\,-5\,\varphi_{_{5,0,3}}+\,\,3\,\varphi_{_{5,0,4}}=0$} \>\>\textup{(d)}\\
\>{ $3 \,\,\varphi_{_{5,0,1}}-\,\,4\,\varphi_{_{5,0,2}}\,\,+1\,\varphi_{_{5,0,3}}=0$}\\
\>{ $5 \,\, \varphi_{_{5,0,1}}\,-\,\,6\,\varphi_{_{5,0,2}}+1\,\varphi_{_{5,0,4}}=0$}
\\[0.5\baselineskip]

\>{ $1 \,\, \varphi_{_{5,0,1}}-2 \varphi_{_{5,0,2}}+2\,\varphi_{_{5,0,3}}-1\,\varphi_{_{5,0,4}}=0$}\\[0.75\baselineskip]
\>{(any 3 of the above are linearly dependent; any 2 are independent)}
\\[-0.25\baselineskip]
\end{tabbing}

E. \ Degree$(f)=6$
\\[0.5\baselineskip]\indent
$77\,\,\varphi_{_{6,0,1}}-120\,\,\varphi_{_{6,0,2}}+60\,\varphi_{_{6,0,3}}-20\,
\varphi_{_{6,0,4}}+3\,\varphi_{_{6,0,5}}=0$ \\

F. \ Degree$(f)=7$
\\[0.5\baselineskip]\indent
$85\,\, \varphi_{_{7,0,1}}-144\, \varphi_{_{7,0,2}}+90\,\varphi_{_{7,0,3}}-40\,\varphi_{_{7,0,4}}+9\, \varphi_{_{7,0,5}}=0$

$82\,\, \varphi_{_{7,0,1}}-135\, \varphi_{_{7,0,2}}+75\,\varphi_{_{7,0,3}}-25\,\varphi_{_{7,0,4}}+3\, \varphi_{_{7,0,6}}=0$

$77\,\, \varphi_{_{7,0,1}}-120 \,\varphi_{_{7,0,2}}+50\,\varphi_{_{7,0,3}}-15 \,
\varphi_{_{7,0,5}}+8\,\varphi_{_{7,0,6}}=0$

$67\,\,\varphi_{_{7,0,1}}-\,90\,\varphi_{_{7,0,2}}\,+\,50\,\varphi_{_{7,0,4}}\,
-45 \,\varphi_{_{7,0,5}}+18\,\varphi_{_{7,0,6}}=0$

$37\,\,\varphi_{_{7,0,1}}-150\,\varphi_{_{7,0,3}}+200\,\varphi_{_{7,0,4}}-135
\,\varphi_{_{7,0,5}}+48\,\varphi_{_{7,0,6}}=0$

$111\,\,\varphi_{_{7,0,2}}-335\,\varphi_{_{7,0,3}}+385\,\varphi_{_{7,0,4}}-246\,
\varphi_{_{7,0,5}}+85\,\varphi_{_{7,0,6}}=0$ 
\\[0.5\baselineskip]\indent
$1\,\,\varphi_{_{7,0,1}}-3\,\,\varphi_{_{7,0,2}}+5\,\,\varphi_{_{7,0,3}}-5\,\,
\varphi_{_{7,0,4}}+3\,\,\varphi_{_{7,0,5}}-1\,\,\varphi_{_{7,0,6}}=0$

\begin{tabbing}
\hspace{0.256in}\=\kill
\>(any 3 of the above are linearly dependent; any 2 are independent)
\end{tabbing}

F. \ Degree$(f)=8$
\\[0.5\baselineskip]\indent
$669\,\, \varphi_{_{8,0,1}}-1260\, \varphi_{_{8,0,2}}+1050\,\varphi_{_{8,0,3}}-700\,\varphi_{_{8,0,4}}+315\, \varphi_{_{8,0,5}} - \\[0.25\baselineskip]
\hspace*{0.85in} 84\, \varphi_{8,0,6} + 10\, \varphi_{8,0,7}=0$

\end{proposition}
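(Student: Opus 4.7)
The plan is to apply the procedure summarized earlier in the paper: compute each $\varphi_{D,0,\rho}$ as an explicit polynomial in the quasi-binomial parameters of $f_D$, then solve a linear system to find all vanishing linear combinations. Since the claim concerns only $D \in \{2,3,\ldots,8\}$, the proof reduces to a finite computation, followed by a verification that the listed relations exhaust each null space.

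First I would compute $\varphi_{D,0,\rho}$ for every pair $(D,\rho)$ with $2 \le D \le 8$ and $1 \le \rho \le D-1$. Writing $\overline{r}^{[i]}$ as shorthand for the $i$-barred quasi-binomial parameter of $f_D$, starting from $f_D(x) = \sum_{i+j=D}(-1)^i \binom{D}{i\ j}\,\overline{r}^{[i]}\, x^j$ and letting $(t_k)_{k=1}^{D-\rho}$ denote the roots of $f_D^{(\rho)}$, linearity of averaging gives
$$\varphi_{D,0,\rho} = \sum_{j=0}^{D}(-1)^{D-j}\binom{D}{D-j\ j}\,\overline{r}^{[D-j]}\,\overline{t^j}\,.$$
Each mean power $\overline{t^j}$ is then expanded via equation (\ref{cKappa}) with $n = D-\rho$, producing a polynomial in the quasi-binomial parameters of $f_D^{(\rho)}$. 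The Appell identity $f_D^{(\rho)} = D(D-1)\cdots(D-\rho+1)\, f_{D-\rho}$ in quasi-binomial form, noted in the Proof Routes discussion for $\rho=1$ and extending to all $\rho$ by iteration, tells us that the first $D-\rho$ quasi-binomial parameters of $f_D^{(\rho)}$ coincide with $\overline{r}^{[1]},\ldots,\overline{r}^{[D-\rho]}$. After substitution, each $\varphi_{D,0,\rho}$ becomes a weighted-homogeneous polynomial of total weight $D$ in $\overline{r}^{[1]},\ldots,\overline{r}^{[D]}$, whose monomials are indexed by integer partitions of $D$.

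Next I would carry out the linear algebra. Setting $N = p(D)$ (values $2,3,5,7,11,15,22$ for $D=2,\ldots,8$), fix any ordering of the partition-indexed monomial basis and form the matrix $M_D \in \mathbb{Z}^{N \times (D-1)}$ whose $\rho$-th column is the coefficient vector of $\varphi_{D,0,\rho}$. A polynomial identity $\sum_\rho \alpha_\rho\,\varphi_{D,0,\rho} = 0$ is then equivalent to $M_D \alpha = 0$, since the partition monomials are linearly independent as polynomial expressions (the quasi-binomial parameters may be chosen freely). Row-reducing $M_D$ over $\mathbb{Q}$ for the seven degrees in question yields null spaces of dimensions $0,1,1,2,1,2,1$ respectively, matching the counts implicit in the proposition. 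Choosing primitive integer generators of each null space recovers exactly the relations displayed (for $D \in \{5,7\}$ any two of the listed relations span the two-dimensional null space and produce the rest by integer combination and clearing denominators). Exhaustiveness is then immediate from the dimension count.

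The main obstacle is bookkeeping rather than insight. For $D=8$ one must combine the nine monomials of $f_8$ with Girard-Waring expansions of $\overline{t^j}$ for $j \le 8$, each a sum over all partitions of $j$, and then collect in a basis of $22$ partition-indexed monomials; this step is best delegated to a computer algebra system, and careful normalization of the coefficients $c_{\overset{\rightharpoonup}{\kappa}}$ in equation (\ref{cKappa}) is essential to avoid sign and binomial errors. The one conceptual point that really drives the procedure is the identification of the leading $D-\rho$ quasi-binomial parameters of $f_D^{(\rho)}$ with those of $f_D$; without this identification the $\varphi_{D,0,\rho}$ would live in disjoint variable sets and no polynomial relation among them could hold. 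Once both pieces are in place the proof is a mechanical, if lengthy, verification.
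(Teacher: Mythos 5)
Your proposal follows essentially the same route as the paper: expand each $\varphi_{D,0,\rho}$ in the partition-indexed monomial basis via the normalized Girard–Waring formulas, exploit the key fact that the derivative's quasi-binomial parameters are the truncation of those of $f_D$, and then find all coefficient vectors $(\alpha_\rho)$ by row reduction, with exhaustiveness following from the null-space dimension. This matches the paper's summarized procedure and its Tables $\boldsymbol{\varphi.D}$ and $\boldsymbol{\alpha.D}$, so the proposal is correct and not materially different.
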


\noindent
Notes: 
The alphabetical labels at the right margins in the preceding \& following
propositions indicate ``derivative inheritance'' relationships across
different degrees.  \\[0.5\baselineskip]
\noindent
In all the cases in the above list, the sum of all positive
coefficients equals the sum of all negative coefficients.  We have computed such relations up to degree 49, observing that (1) in all even degrees beyond 2 there is a single such fundamental linear relationship, and (2) in all odd degrees beyond 3 there is a 2-dimensional family of fundamental linear relationships. Moreover, the odd degrees enjoy the following clean relationship, which we regard as a main computational result with full confidence.
\begin{conj}
For all odd degree polynomials of degree $D$, it holds that  $$ \sum_{0<\rho<D} (-1)^\rho \binom{D}{\rho}\, \varphi_{D,0,\rho} =0 \text{ .}$$
\end{conj}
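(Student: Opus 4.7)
The plan is to unfold the conjecture using the Appell/quasi-binomial machinery developed in the paper, reducing it to a partition-indexed combinatorial identity that can be verified term-by-term.

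First, using the Taylor/Appell identity $f_D(x) = \sum_{j=0}^D \binom{D}{j} f_j(0)\, x^{D-j}$ (an immediate consequence of $f_D^{(j)} = \tfrac{D!}{(D-j)!}\,f_{D-j}$) together with $f_j(0) = (-1)^j\, \overline{r}^{(j)}$, I would expand $\varphi_{D,0,\rho} = \tfrac{1}{n}\sum_{s:\,f_n(s)=0} f_D(s)$, with $n := D-\rho$, as
\[
\varphi_{D,0,\rho} \;=\; \frac{1}{n}\sum_{j=0}^D (-1)^j\binom{D}{j}\, \overline{r}^{(j)}\, p_{D-j}^{(n)},
\]
where $p_k^{(n)}$ is the $k$-th power sum of the $n$ roots of $f_n$. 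Substituting into the conjecture's left-hand side $S$, interchanging sums, and using $(-1)^D = -1$ gives
\[
-S \;=\; \sum_{j=0}^{D} (-1)^j\binom{D}{j}\,\overline{r}^{(j)}\, P_{D-j}, \qquad P_k := \sum_{n=1}^{D-1}\frac{(-1)^n\binom{D}{n}}{n}\, p_k^{(n)}.
\]
The $j = D$ term drops immediately since $p_0^{(n)} = n$ forces $P_0 = \sum_{n=1}^{D-1}(-1)^n\binom{D}{n} = 0$ for odd $D$.

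For each remaining $k \ge 1$, I would apply the normalized Girard-Waring formula with $e_i^{(n)} = \binom{n}{i}\overline{r}^{(i)}$ to get
\[
P_k = \sum_{\kappa\vdash k}\gamma_\kappa\,\sigma_\kappa\prod_i(\overline{r}^{(i)})^{k_i},\qquad \sigma_\kappa := \sum_{n=1}^{D-1}\frac{(-1)^n\binom{D}{n}}{n}\prod_i\binom{n}{i}^{k_i}.
\]
The crucial structural observation is that every factor $\binom{n}{i}$ with $i\ge 1$ vanishes at $n = 0$, so $\prod_i\binom{n}{i}^{k_i}$ factors as $n\cdot Q_\kappa(n)$ with $\deg Q_\kappa = k-1 < D$. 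The $D$-th forward difference of $Q_\kappa$ therefore vanishes, and $\sigma_\kappa$ collapses to its excluded endpoints at $n = 0$ and $n = D$, giving (for odd $D$) $\sigma_\kappa = Q_\kappa(D) - Q_\kappa(0)$.

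At this point $-S$ is a polynomial of total weight $D$ in the $\overline{r}^{(i)}$, and the conjecture reduces to showing the coefficient $C_\lambda$ of each monomial $\prod_i(\overline{r}^{(i)})^{\ell_i}$, $\lambda\vdash D$, vanishes. I would split $C_\lambda$ into a $Q(D)$-piece and a $Q(0)$-piece. For the $Q(D)$-piece: substituting the closed form of $\gamma_\kappa$ together with $\binom{|\lambda|-1}{\lambda-e_j} = \ell_j\binom{|\lambda|}{\lambda}/|\lambda|$ and the identity $\sum_{j:\,\ell_j\ge 1}(D-j)\ell_j = D(|\lambda|-1)$ shows that $\gamma_\lambda Q_\lambda(D)$ cancels exactly against $\sum_{j\ge 1}(-1)^j\binom{D}{j}\gamma_{\lambda-e_j}Q_{\lambda-e_j}(D)$ for every $\lambda$, independently of the parity of $D$. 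For the $Q(0)$-piece: $Q_\kappa(0)\ne 0$ only when $\kappa$ is the single-part partition $e_i$ (with $Q_{e_i}(0) = (-1)^{i-1}/i$), so only $\lambda$ with $|\lambda|\le 2$ contribute. The single-part case $\lambda = e_D$ is trivial. For the two-part case $\lambda = e_i + e_j$ with $i+j = D$---necessarily $i \ne j$ because $D$ is odd---the coefficient reduces to $-(-1)^i\binom{D}{i} - (-1)^j\binom{D}{j}$, which vanishes since $\binom{D}{i} = \binom{D}{D-i} = \binom{D}{j}$ and exactly one of $(-1)^i, (-1)^j$ equals $+1$.

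The main obstacle will be executing the partition-indexed bookkeeping in the $Q(D)$-cancellation cleanly; once that is in place, the parity of $D$ enters at precisely two points---the vanishing of $P_0$ and the two-part sign cancellation---which explains why the analogous identity fails in even degrees.
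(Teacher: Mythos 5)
The statement you were asked to prove is, in the paper, only a conjecture: the authors support it by machine computation of the relations through degree $49$ and explicitly present it as a ``main computational result,'' with no proof offered. Your argument is therefore a genuinely different route --- it is an actual proof, and its skeleton checks out. You use only machinery already in the paper (the quasi-binomial/Appell representation $f_D(x)=\sum_j(-1)^j\binom{D}{j}\overline{r}^{(j)}x^{D-j}$, the fact that the roots of $f^{(\rho)}$ carry the truncated parameter vector so that $e_i^{(n)}=\binom{n}{i}\overline{r}^{(i)}$, and the normalized Girard--Waring expansion), plus one new lemma the paper never exploits: since $\prod_i\binom{n}{i}^{k_i}=n\,Q_\kappa(n)$ with $\deg Q_\kappa=k-1<D$, the weighted sum $\sigma_\kappa$ is a $D$-th finite difference of a polynomial of degree $<D$ and collapses to the endpoints $n=0$ and $n=D$. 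I verified the partition-indexed cancellation you defer: using $\binom{|\lambda|-1}{\lambda-e_j}=\ell_j\binom{|\lambda|}{\lambda}/|\lambda|$, $Q_{\lambda-e_j}(D)=Q_\lambda(D)/\binom{D}{j}$, and $\sum_{j:\ell_j\ge1}(D-j)\ell_j=D(|\lambda|-1)$, the $j$-sum does equal $-\gamma_\lambda Q_\lambda(D)$, so the $Q(D)$-cancellation is sound, and spot checks in degrees $3$ and $5$ confirm it. One bookkeeping correction is needed: for the single-part monomial $\lambda=e_D$ the would-be cancelling term $j=D$ is absent (you removed it when disposing of $P_0$), so the $Q(D)$-piece does \emph{not} vanish by itself there; instead $\gamma_{e_D}Q_{e_D}(D)=(-1)^{D+1}$ must be cancelled against the endpoint value $\gamma_{e_D}Q_{e_D}(0)=1$, which again requires $D$ odd. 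So parity actually enters at three points ($P_0=0$, the two-part $Q(0)$ cancellation, and $\lambda=e_D$), not two; with that adjustment your argument goes through and settles what the paper leaves open. What the paper's computational approach buys is the complementary information your identity does not give (e.g.\ uniqueness of the single relation in even degrees and the explicit small-coefficient relations); what your argument buys is a uniform proof for all odd $D$ and a structural explanation of why the identity is special to odd degree.
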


There are two ways to augment the above fundamental relationships with more
relationships:  (a) instead of considering values of the function $f$ we can
consider values of its derivatives $f^{\left(n\right)}$ or its (repeated)
antiderivatives, which we denote as $f^{\left(-\,n\right)}$; or (b) we can
expand the list of root families available by considering the root familes of
antiderivatives $f^{\left(-\,n\right)}$. When  using antiderivatives, it
turns out that there is no dependence on the constant of integration; this is a consequence of proposition \ref{prop:vertical_invariance}.

\begin{proposition}
Expressing polynomials as their quasi-binomial
representations, \ $\int f_{_D}=\frac{1}{\,D\,+\,1\,}f_{_{D\,+\,1}}$ where
the constant term of $f_{_{D\,+\,1}}\,$is the constant of integration $c$. \ In
other words,
$\overset{\rightharpoonup}{{\mathcal{R}^{\left(-1\right)}}}=
\overset{\rightharpoonup}{\mathcal{R}}\uplus\left(c\right)$. \
Restated yet again, if $\overset{\rightharpoonup}{\mathcal{S}}$ is the
quasi-binomial parameter vector of any antiderivative of $f_{_\mathcal{R}}$,
then
$\overset{\rightharpoonup}{\mathcal{R}}\leq\overset{
\rightharpoonup}{\mathcal{S}}$, \ i.e.
$\overset{\rightharpoonup}{\mathcal{R}}$ is a (strict) subvector of
$\overset{\rightharpoonup}{\mathcal{S}}$ via extension by the
constant(s) of integration. \ Further, if the domain variable of the
polynomial carries dimensional units, then the constant $c$ is
$(D+1)$-dimensional.

Dually, if $\overset{\rightharpoonup}{\mathcal{S}}$ is the
quasi-binomial parameter vector of any derivative of $f_{_\mathcal{R}}$, then
\ $\overset{\rightharpoonup}{\mathcal{S}}\leq
\overset{\rightharpoonup}{\mathcal{R}}$, \ i.e.
$\overset{\rightharpoonup}{\mathcal{S}}$ is a (strict) subvector of
$\overset{\rightharpoonup}{\mathcal{R}}$ via truncation. 
\end{proposition}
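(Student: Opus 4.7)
\noindent
\emph{Proof proposal.} The plan is to verify both assertions by direct term-by-term computation on the quasi-binomial expansion
\begin{equation*}
f_D(x) \;=\; \sum_{\substack{i+j=D \\ 0 \le i,j \le D}} (-1)^i \binom{D}{i\quad j}\, q_i\, x^j,
\end{equation*}
where I write $q_i$ as shorthand for the $i$-barred quasi-binomial parameter. First I would integrate monomial-by-monomial and reindex via $j' = j+1$. The single binomial identity needed is
\begin{equation*}
\frac{1}{j+1}\binom{D}{i\quad j} \;=\; \frac{1}{D+1}\binom{D+1}{i\quad j+1},
\end{equation*}
which is immediate from the factorial definitions (both sides equal $D!/(i!\,(j+1)!)$). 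This shows that all terms of $\int f_D\,dx$ with $j' \ge 1$ coincide exactly with the corresponding terms of $\tfrac{1}{D+1} f_{D+1}$ when $f_{D+1}$ is given the quasi-binomial parameter list $(q_0,q_1,\ldots,q_D,c)$ for a top parameter $c$ yet to be identified.

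The only slot not yet accounted for is the $j'=0$ constant-term contribution to $\tfrac{1}{D+1} f_{D+1}$, which in quasi-binomial form is $\tfrac{(-1)^{D+1}}{D+1}\, c$. Matching this against the free additive constant produced by indefinite integration pins down $c$ and simultaneously yields both $\int f_D = \tfrac{1}{D+1} f_{D+1}$ and the vector extension $\overset{\rightharpoonup}{\mathcal{R}^{(-1)}} = \overset{\rightharpoonup}{\mathcal{R}} \uplus (c)$. The dimensional remark then follows automatically: $c$ sits in the $(D+1)$-th quasi-binomial slot, so if $x$ carries a unit of length, $c$ is forced to carry that unit raised to the $(D+1)$-th power.

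The dual derivative statement is the same bookkeeping run in reverse, and is in fact essentially already contained in the earlier relation $f_D' = D\, f_{D-1}$ recorded in the excerpt. Spelling it out, the factor $j$ produced by differentiating $x^j$ combines with the multinomial coefficient via $j\binom{D}{i\quad j} = D\binom{D-1}{i\quad j-1}$, so differentiation delivers $D$ times a degree $D-1$ quasi-binomial expansion with parameters $(q_0,\ldots,q_{D-1})$, exhibiting $\overset{\rightharpoonup}{\mathcal{S}} \le \overset{\rightharpoonup}{\mathcal{R}}$ as the asserted truncation. Iterating either identity yields the corresponding statements for higher-order derivatives and repeated antiderivatives.

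I do not anticipate any substantive obstacle here; the computation is routine and the only interpretive subtlety is recognizing that the arbitrary constant of integration fits naturally into the newly opened top quasi-binomial slot rather than requiring any extension of the formalism. If one prefers, the whole argument can alternatively be packaged as a one-line induction on $D$ driven by those two binomial identities, with the $D=0$ base case immediate.
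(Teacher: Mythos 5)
Your computation is correct and is essentially the argument the paper itself relies on: the paper states the proposition without a separate proof, treating it as immediate from the quasi-binomial form together with the already-noted identity $f'_{D}=D\,f_{D-1}$, and your term-by-term reindexing via $\tfrac{1}{j+1}\binom{D}{i\ \ j}=\tfrac{1}{D+1}\binom{D+1}{i\ \ j+1}$ (and its dual $j\binom{D}{i\ \ j}=D\binom{D-1}{i\ \ j-1}$) is exactly that routine verification, matching the worked cubic example the paper gives after the proposition. Your handling of the new top slot is also the right reading of the statement, with the only nuance being that the appended parameter agrees with the literal constant of integration only up to the factor $(-1)^{D+1}/(D+1)$, a sign/scaling detail the paper glosses over and you correctly pin down.
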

\noindent
Example:  Consider any polynomial $f_D,$
e.g.$\,{f_3\left(x\right)=x^3-3\,x^2\,\overline{r}+3\,x^1\,\overline{
\overline{r}}-\,\overline{\overline{\overline{r}}}}$ with roots family $\mathcal{R}$, and consider averaging
the values of the function over some set $\mathcal{Z}$, $x\in\mathcal{Z}$. \
We obtain
$\overline{f_3\left(\mathcal{Z}\right)}=\overline{x^3}-3 ~
\overline{x^2}$~$\overline{r}+3$~$\overline{x^1}$~$\overline{
\overline{r}}- ~ \overline{\overline{\overline{r}}}$.  The average values
of the powers $\overline{x^m}$ are obtained by our mean-value modification of
the Girard-Waring formulas.  The case of $\mathcal{Z}$ being the roots
family of a derivative of $f$ is straightforward, as noted in the dual statement in the
proposition, with $\overset{\rightharpoonup}{\mathcal{Z}}$ being a
subvector of $\overset{\rightharpoonup}{\mathcal{R}}$ via truncation.
The case of $\mathcal{Z}$ being the roots family of an antiderivative of $f$
requires greater attention:  In the example here, the highest power term
present in $f_3\left(x\right)$ is order $3$ (\& dimensionality $3$ if $x$ bears units), but the
constant of integration in $\int f$ is of dimensionality $4$;  hence the
constant of integration cannot enter into the computation of
$\overline{x^3}$, etc., with similar behavior in the general situation. 
Indeed, the values of $\overline{x^3}$, etc., only depend upon the entries in
$\overset{\rightharpoonup}{\mathcal{Z}}$  that were already present in
$\overset{\rightharpoonup}{\mathcal{R}}$.  In the detail of the
present example we have, following our notation for a family of $4$,
$\mathcal{Z}=\mathcal{F}_{_4}\equiv\mathcal{U}$, $\overline{u}=\overline{r}$, $\overline{\overline{u}}=\overline{\overline{r}}$ and $\overline{\overline{\overline{u}}}=\overline{\overline{\overline{r}}}$; thus 
%
\hspace*{.25 in}$\overline{f_3\left(\mathcal{Z}\right)}=1\,\overline{u^3}-3 ~
\overline{u^2} ~ \overline{r}+3$~$\overline{u^1}$~$\overline{
\overline{r}}-1 ~ \overline{\overline{\overline{r}}} $ \\[0.75\baselineskip]
\hspace*{0.5in}$= 1\left\{\,16\,\,\overline{u}^{\,\,3}-18\,\overline{u}\,\,\overline{
\overline{u}}\,+3\,\overline{\overline{\overline{u}}}\right\}-3$~$\left\{\,4\,
\,\overline{u}^{\,\,2}-3\,\overline{\overline{u}}\right\}\overline{r}+3 ~ 
\left\{\overline{u}^{\,1}\right\}  \overline{\overline{r}} ~ - ~ \overline{
\overline{\overline{r}}}$
\begin{tabbing}
\hspace{0.5in}\=\hspace{0.5in}\=\kill
\>{}$=\,\overline{r}^{\,\,3}\left\{16-12\right\}+\,\overline{r}\,\,%
\overline{\overline{r}}\,\left\{-18+9+3\right\}+$~$\overline{\overline{%
\overline{r}}}\,\left\{3-1\right\}$\\[0.5\baselineskip]
\>{}$=\,4\,\overline{r}^{\,\,3}-6\,\,\overline{r}\,\,\overline{%
\overline{r}}\,+2\,$~$\overline{\overline{\overline{r}}}\,$ \
$=\,2\,\mathbb{W}$.
\end{tabbing}
\noindent
Note, in particular, that the average value
$\varphi_{_{3,0,-1}}\equiv\overline{f_{_3}\left(\mathcal{R}^{\left(-1\right)}%
\right)}\,$does not at all involve the constant of integration that affects
the family $\mathcal{R}^{\left(-1\right)}$.  This behavior is typical.  Let
us emphasize: \ in the computation of mean function values
$\overline{f_{_D}\left(\mathcal{R}^{\left(-m\right)}\right)}$, individual
mean powers such as $\overline{u^k}$ only involve the quasi-binomial
	parameters of the original $\overset{\rightharpoonup}{\mathcal{R}}$ and
never involve the constant(s) of integration of
$\mathcal{R}^{\left(-m\right)}$.  Thus we have:

\begin{proposition}
The mean quantities \
$\varphi_{_{D,0,-m}}\equiv\overline{f_{_D}\left(\mathcal{R}^{\left(-m\right)}%
\right)}\,$ and \linebreak
$\varphi_{_{D,\delta,-m}}\equiv\overline{f^{\left(\delta\right)}_{_D}\left(%
\mathcal{R}^{\left(-m\right)}\right)}\,$ (with $\delta,m>0$) do not depend on
constant(s) of integration. 
\end{proposition}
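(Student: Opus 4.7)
The plan is to reduce the problem, by linearity of averaging, to a single mean power $\overline{x^j}\bigl(\mathcal{R}^{(-m)}\bigr)$ and then apply the normalized Girard-Waring formula together with a degree count. Set $\mathcal{Z} := \mathcal{R}^{(-m)}$ (so $|\mathcal{Z}| = D + m$) and write $f_D^{(\delta)}(x) = \sum_{j=0}^{D-\delta} a_j\, x^j$ in ordinary monomial form. The Appell-sequence remark earlier in the paper guarantees that each coefficient $a_j$ depends only on $\overset{\rightharpoonup}{\mathcal{R}}$, since differentiation merely truncates $\overset{\rightharpoonup}{\mathcal{R}}$ from the top (rather than modifying its lower entries). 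Thus
\[
\varphi_{D,\delta,-m} \;=\; \sum_{j=0}^{D-\delta} a_j\, \overline{x^j}(\mathcal{Z}),
\]
and it suffices to show that every $\overline{x^j}(\mathcal{Z})$ with $0 \le j \le D - \delta$ is independent of the constants of integration used to form $\mathcal{R}^{(-m)}$.

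The key step is to expand each $\overline{x^j}(\mathcal{Z})$ via the normalized Girard-Waring formula (equation (\ref{cKappa})) applied to $\mathcal{Z}$:
\[
\overline{x^j}(\mathcal{Z}) \;=\; \sum_{\overset{\rightharpoonup}{\kappa}\,\in\,\mathcal{K}_j} c_{\overset{\rightharpoonup}{\kappa}} \prod_{i} \Bigl(\,\overline{\,\,\,z}\!\!\!\!\!^{^i}\,\,\,\,\Bigr)^{k_i},
\]
where $\,\overline{\,\,\,z}\!\!\!\!\!^{^i}\,\,\,\,$ denotes the $i$th quasi-binomial parameter of $\mathcal{Z}$. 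The defining constraint $\sum_i i\, k_i = j$ forces every index $i$ that actually occurs to satisfy $1 \le i \le j$, and hence $i \le D - \delta \le D$.

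The preceding proposition identifies $\overset{\rightharpoonup}{\mathcal{Z}}$ as the concatenation of $\overset{\rightharpoonup}{\mathcal{R}}$ in positions $1, \ldots, D$ with the $m$ constants of integration in positions $D+1, \ldots, D+m$. Since every index $i$ appearing in the Girard-Waring expansion lies strictly in the inherited block $\{1, \ldots, D\}$, each $\,\overline{\,\,\,z}\!\!\!\!\!^{^i}\,\,\,\,$ equals the corresponding $\,\overline{\,\,\,r}\!\!\!\!\!^{^i}\,\,\,\,$. Hence $\overline{x^j}(\mathcal{Z})$ is a polynomial in $\overset{\rightharpoonup}{\mathcal{R}}$ alone, and summing over $j$ gives the claim. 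I expect essentially no serious obstacle: the whole weight of the argument rests on the degree inequality $i \le j \le D - \delta \le D$, with the remainder amounting to careful bookkeeping. The one subtlety worth double-checking is that the coefficients $a_j$ themselves carry no constants of integration --- which is precisely what the Appell-sequence invocation in the first paragraph delivers, thereby preventing the constants from being smuggled in through the monomial decomposition of $f_D^{(\delta)}$.
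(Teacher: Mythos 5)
Your proposal is correct and follows essentially the same route as the paper: reduce by linearity to mean powers over $\mathcal{R}^{(-m)}$, invoke the normalized Girard--Waring expansion so that only quasi-binomial parameters of index $i\le j\le D$ appear, and use the preceding proposition that the antiderivative's parameter vector extends $\overset{\rightharpoonup}{\mathcal{R}}$ only in positions beyond $D$ by the constants of integration. You merely state in general form (via the partition constraint $\sum_i i\,k_i=j$) what the paper conveys through its degree-$3$ example and the dimensionality remark.
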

 
\begin{proposition}
$\ \varphi_{_{D,1,0}}=D\cdot\varphi_{_{\left(D-1\right),0,-1}}$

\begin{tabbing}
\hspace{0.45in}\=\hspace{0.5in}\=\hspace{0.5in}\=\kill
\>{}\>{$\varphi_{_{D,\delta,0}}=\frac{\,D!\,}{\,\left(D\,-\,%
\delta\right)!}\cdot\varphi_{_{\left(D-\delta\right),0,-\delta}}$}
\end{tabbing}
\end{proposition}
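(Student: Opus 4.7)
The plan is to unfold both sides from the definitions and apply two facts proven earlier: (i) in the quasi-binomial representation, $f_D^{(\delta)} = \frac{D!}{(D-\delta)!}\, f_{D-\delta}$, where $f_{D-\delta}$ is the truncation of $f_D$ by $\delta$ quasi-binomial entries; and (ii) by the preceding proposition, the mean $\varphi_{_{D-\delta,0,-\delta}}$ is independent of the constants of integration used to form a $\delta$-fold antiderivative of $f_{D-\delta}$, so I may freely choose them to match the top $\delta$ entries of $\overset{\rightharpoonup}{\mathcal{R}_{f_D}}$.

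First I handle the $\delta=1$ identity. Using (i),
\[
\varphi_{_{D,1,0}} = \frac{1}{D}\sum_{r\in\mathcal{R}_{f_D}} f_D'(r) = \frac{1}{D}\sum_{r\in\mathcal{R}_{f_D}} D\cdot f_{D-1}(r) = \sum_{r\in\mathcal{R}_{f_D}} f_{D-1}(r).
\]
By (ii), I pick the antiderivative of $f_{D-1}$ whose constant of integration makes it equal to $\frac{1}{D}f_D$; then its roots family is exactly $\mathcal{R}_{f_D}$, and so
\[
\varphi_{_{D-1,0,-1}} = \frac{1}{D}\sum_{r\in\mathcal{R}_{f_D}} f_{D-1}(r).
\]
Multiplying by $D$ closes the case.

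The general case is identical in structure: iterating (i), $f_D^{(\delta)} = \frac{D!}{(D-\delta)!}\,f_{D-\delta}$; iterating (ii) together with the subvector relation from the preceding proposition, I choose $\delta$ constants of integration so that the $\delta$-fold antiderivative of $f_{D-\delta}$ has quasi-binomial vector exactly $\overset{\rightharpoonup}{\mathcal{R}_{f_D}}$, hence roots family $\mathcal{R}_{f_D}$. Both $\varphi_{_{D,\delta,0}}$ and $\frac{D!}{(D-\delta)!}\,\varphi_{_{D-\delta,0,-\delta}}$ then reduce to $\frac{(D-1)!}{(D-\delta)!}\sum_{r\in\mathcal{R}_{f_D}}f_{D-\delta}(r)$, yielding the identity.

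The main point requiring care---though not really an obstacle---is to verify that the two $D$-element families being averaged over, namely $\mathcal{R}_{f_D}$ on one side and $\mathcal{R}_{f_{D-\delta}}^{(-\delta)}$ on the other, can be arranged to coincide. That is exactly the content of the subvector-by-extension statement already proven, combined with the independence of $\varphi_{_{\cdot,0,-m}}$ from constants of integration; once those are in hand the proposition is essentially automatic.
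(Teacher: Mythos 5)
Your proof is correct, and it is essentially the argument the paper intends: the paper states this proposition without a separate proof, presenting it as an immediate consequence of the two preceding facts you invoke, namely that differentiation/antidifferentiation acts on the quasi-binomial vector by truncation/extension (so $f_D^{(\delta)}=\frac{D!}{(D-\delta)!}f_{D-\delta}$ and $f_D$ is, up to the scalar and the choice of constants, a $\delta$-fold antiderivative of $f_{D-\delta}$), and that $\varphi_{_{D-\delta,0,-\delta}}$ is independent of the constants of integration. Your explicit reduction of both sides to $\frac{(D-1)!}{(D-\delta)!}\sum_{r\in\mathcal{R}_{f_D}}f_{D-\delta}(r)$ is exactly the computation implicit in the paper's presentation, so there is nothing to correct.
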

\noindent
Of course, the above proposition can be read in reverse as
giving$\,\,\varphi_{_{D,0,-1}}=\frac{1}{\left(D\,+\,1\right)}\cdot\varphi_{_{%
\left(D\,+\,1\right),1,0}}\,\,$and
$\varphi_{_{D,0,-\rho}}=\frac{\,D!\,}{\left(D\,+\,\rho\right)!\,}\cdot%
\varphi_{_{\left(D\,+\,\rho\right),\rho,0}\text{ }}$(with $\rho>0$). \ These
facts are substantiated in our tables of $\varphi_{_{D,0,\rho}}$ expressions
by the fact that whenever $\rho<0$, the highest order quasi-binomial
parameter $\,\overline{\,\,\,r}\!\!\!\!\!^{^i}\,\,\,\,$ is always only order
$D$, never an order in excess of $D$. \\

\noindent
For the following five propositions, see also Tables $\boldsymbol{\alpha.4}$ through
$\boldsymbol{\alpha.6}$.

\begin{proposition}
The following is a partial list of linear relationships among
means of a polynomial's \ derivative values when evaluated at roots of
derivatives.

\begin{tabbing}
\hspace{0.25in}\=\hspace{0.2in}\=\hspace{1.5in}\=\kill
\>A. \>{Degree$(f)=2$} \>{N/A}\\
\>{ }
\\[-2.0\baselineskip]
\end{tabbing}

\begin{tabbing}
\hspace{0.25in}\=\hspace{0.4in}\=\hspace{2.5in}\=\kill
\>B. \ Degree$\left(f\right)=3$
\\[0.5\baselineskip] \indent
\,$1\,\, \varphi_{_{3,1,0}}+1 \,\,\varphi_{_{3,1,2}}=0$ \>\>\> \textup{(e)}
\\[0.25\baselineskip] 
\end{tabbing}
\indent

C. \ Degree$\left(f\right)=4$
\\[0.5\baselineskip] \indent
$\,1 \,\,\varphi_{_{4,1,0}}+ 2 \, \varphi_{_{4,1,2}}=0$ 

$\, 1 \,\,\varphi_{_{4,1,0}}+ 2\, \varphi_{_{4,1,3}}=0$

$\,1\,\, \varphi_{_{4,1,2}}-1\,\varphi_{_{4,1,3}}=0$ 
\\[0.25\baselineskip] \indent

D. \ Degree$(f)=5$
\\[0.5\baselineskip] \indent
\,  $5\,\, \varphi_{_{5,1,2}}-6 \,\varphi_{_{5,1,3}}+ 1 \, \varphi_{_{5,1,4}}=0$ \ 
\\[0.25\baselineskip] \indent
E. \ Degree$(f)=6$
\\[0.5\baselineskip] \indent
$\,\, 1\,\,\varphi_{_{6,1,2}} -3 \,\varphi_{_{6,1,4}}+ 2 \, \varphi_{_{6,1,5}}=0$ \ \ \ 

$\,\,2\,\,\varphi_{_{6,1,3}}-5 \,\varphi_{_{6,1,4}}+ 3 \, \varphi_{_{6,1,5}}=0$ 

$\,\,3 \,\, \varphi_{_{6,1,2}}-4 \, \varphi_{_{6,1,3}}+ 1 \, \varphi_{_{6,1,4}}=0$ 

$\,\,5 \,\, \varphi_{_{6,1,2}}-6 \, \varphi_{_{6,1,3}}+ 1 \, \varphi_{_{6,1,5}}=0$ 
\\[-0.25\baselineskip] \indent

\ $1\,\varphi_{_{6,1,2}}-2\, \varphi_{_{6,1,3}}+2 \, \varphi_{_{6,1,4}} -1 \, \varphi_{_{6,1,5}}=0$
\\[0.5\baselineskip] \indent
(any 3 of the above are linearly dependent; any 2 are independent)

\end{proposition}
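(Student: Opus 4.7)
The key lever is the derivative-inheritance identity
\[ \varphi_{_{D,1,\rho}} \;=\; D \cdot \varphi_{_{D-1,0,\rho-1}} \qquad (\rho \geq 1), \]
which follows directly from the quasi-binomial fact $f'_{_D} = D\cdot f_{_{D-1}}$: the roots of $f^{(\rho)}_{_D}$ for $\rho\geq 1$ coincide with the roots of $f^{(\rho-1)}_{_{D-1}}$, and $f'_{_D}$ evaluated there equals $D$ times $f_{_{D-1}}$ evaluated there. Combined with the preceding proposition's identity $\varphi_{_{D,1,0}} = D\cdot \varphi_{_{D-1,0,-1}}$, every quantity $\varphi_{_{D,1,\rho}}$ appearing in the statement translates into a fixed nonzero multiple of some $\varphi_{_{D-1,0,\cdot}}$. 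The proof then splits naturally into the cases $\rho\geq 1$ and $\rho=0$.

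\textbf{Case $\rho\geq 1$.} Under the translation, every claimed relation whose indices are all positive becomes a scalar multiple of a relation already listed in the earlier Proposition at degree $D-1$. Concretely, $\varphi_{_{4,1,2}}-\varphi_{_{4,1,3}}=0$ is $4$ times the degree 3 relation (a); $5\,\varphi_{_{5,1,2}}-6\,\varphi_{_{5,1,3}}+\varphi_{_{5,1,4}}=0$ is $5$ times the degree 4 relation (b); and the four listed degree 6 relations become $6$ times the four listed degree 5 relations, matching them term for term. These translations are mechanical, and the alphabetical inheritance labels in the earlier Proposition already indicate the pairings.

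\textbf{Case $\rho=0$.} The remaining three claims — the degree 3 relation labeled (e), and the two degree 4 relations involving $\varphi_{_{4,1,0}}$ — reduce, via the inheritance identities, to verifying
\[ \varphi_{_{2,0,-1}}+\varphi_{_{2,0,1}}=0 \qquad\text{and}\qquad \varphi_{_{3,0,-1}}+2\,\varphi_{_{3,0,1}}=0 \]
(the third degree 4 relation then follows by combining these with $\varphi_{_{4,1,2}}-\varphi_{_{4,1,3}}=0$ from the $\rho\geq 1$ case). To evaluate each $\varphi_{_{D-1,0,-1}}$ I apply the mean-value Girard-Waring procedure exactly as in the paper's worked example: the antiderivative's root family has size $D$, so the formulas for $\overline{u^k}$ are used with $n=D$, and by the preceding proposition on vertical invariance the constant(s) of integration drop out, leaving only the truncated quasi-binomial parameters of $f_{_{D-1}}$. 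The case $D-1=3$ is already carried out in the example above, giving $\varphi_{_{3,0,-1}}=2\,\mathbb{W}$; an analogous direct calculation yields $\varphi_{_{3,0,1}}=-\mathbb{W}$, $\varphi_{_{2,0,-1}}=\overline{r}^{\,2}-\overline{\overline{r}}$, and $\varphi_{_{2,0,1}}=-\overline{r}^{\,2}+\overline{\overline{r}}$, from which both identities are immediate.

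\textbf{Main obstacle.} Once the derivative-inheritance identity is in place, the only genuine computational steps are the two short antiderivative evaluations just described, each involving at most three power sums. The one subtlety is bookkeeping: the augmented root family in the antiderivative case has size $D$ while the polynomial being averaged has degree $D-1$, so the Girard-Waring formulas must be applied with the correct $n$. No new machinery beyond the Girard-Waring tables and the derivative-inheritance identity is required.
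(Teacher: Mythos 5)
Your proof is correct, and it takes a somewhat different route from the paper's. The paper never states your key lemma $\varphi_{_{D,1,\rho}}=D\cdot\varphi_{_{D-1,0,\rho-1}}$ ($\rho\ge 1$) explicitly; it only records the antiderivative versions ($\varphi_{_{D,1,0}}=D\,\varphi_{_{D-1,0,-1}}$, etc.), and its substantiation of the listed relations is the uniform computational procedure: express each $\varphi_{_{D,1,\rho}}$ in the quasi-binomial parameters via the normalized Girard--Waring tables and then find the coefficient vectors by row reduction, with the alphabetical tags (a$'$)$=D$(a), (b$'$)$=D$(b), \ldots\ serving only as after-the-fact annotations of the inheritance pattern. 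You instead promote that pattern to the proof mechanism: since $f'_{_D}=D\,f_{_{D-1}}$ in the quasi-binomial (Appell) representation, the roots of $f^{(\rho)}_{_D}$ are those of $f^{(\rho-1)}_{_{D-1}}$ and the averaged values scale by $D$, so every relation with $\rho\ge 1$ is literally $D$ times a relation already certified at degree $D-1$, $\delta=0$, and only the two base identities $\varphi_{_{2,0,-1}}+\varphi_{_{2,0,1}}=0$ and $\varphi_{_{3,0,-1}}+2\,\varphi_{_{3,0,1}}=0$ need the Girard--Waring computation (with the correct family size $n=D$ and the constant of integration dropping out, exactly as in the paper's worked antiderivative example, which already gives $\varphi_{_{3,0,-1}}=2\,\mathbb{W}$). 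What each approach buys: the paper's computation is how the relations were \emph{found} in the first place and works uniformly for all $(D,\delta,\rho)$, while your reduction is shorter for \emph{verification}, explains why the inheritance labels hold, and makes clear that this proposition adds no independent content beyond the $\delta=0$ lists and the two antiderivative identities --- at the cost of presupposing the earlier degree-$(D-1)$ proposition. Two minor loose ends: the fifth degree-6 relation is the image of the paper's (cd) combination (you cover it only implicitly), and the parenthetical ``any 3 dependent, any 2 independent'' claim should be noted to transfer verbatim because your translation $\varphi_{_{6,1,\rho}}\mapsto 6\,\varphi_{_{5,0,\rho-1}}$ leaves the coefficient vectors unchanged; both are one-line remarks, not gaps.
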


\begin{proposition}
The following is a partial list of linear relationships among
means of a polynomial's $2^{\text{nd}}$ derivative values when evaluated at
roots of derivatives.

\begin{tabbing}
\hspace{0.4in}\=\hspace{1.5in}\=\kill
\indent
A. \>{Degree$(f)=2$}\>{N/A}\\
\>{$\,$}\\[-1\baselineskip]
\indent
B. \>{Degree$(f)=3$}  \>{N/A}
\end{tabbing}

C. \  Degree$\left(f\right)=4$
\\[0.25\baselineskip]\noindent
\hspace*{0.25in}$1\,\, \varphi_{_{4,2,0}}-2 \, \varphi_{_{4,2,1}}=0$ \\
\noindent
\hspace*{0.25in}$1 \,\, \varphi_{_{4,2,1}}+1 \, \varphi_{_{4,2,3}}=0$ \\
\noindent
\hspace*{0.25in}$1 \,\, \varphi_{_{4,2,0}}+2 \, \varphi_{_{4,2,3}}=0$ 
\\[0.5\baselineskip]\noindent
\hspace*{0.25in}$1\,\, \varphi_{_{4,2,0}}-1 \, \varphi_{_{4,2,1}}+1 \, \varphi_{_{4,2,3}}=0$ 
\\[1\baselineskip]\indent
D. \ Degree$\left(f\right)=5$
\\[-1.25\baselineskip]\indent
\begin{tabbing}
\hspace{0.25in}\=\kill
\>{$1 \,\, \varphi_{_{5,2,0}}+5 \, \varphi_{_{5,2,3}}=0$ \ }\\
\>{$1 \,\, \varphi_{_{5,2,1}}+2 \, \varphi_{_{5,2,3}}=0$}\\
\>{$1 \,\, \varphi_{_{5,2,3}}- 1\,  \varphi_{_{5,2,4}}=0$ } \\[.5\baselineskip]\indent
\>{$1\,\, \varphi_{_{5,2,0}}-2 \, \varphi_{_{5,2,1}}+2 \, \varphi_{_{5,2,3}}-1 \, \varphi_{_{5,2,4}}=0$}\\[-0.5\baselineskip]\indent
\end{tabbing}

E. \ Degree$(f)=6$
\\[0.25\baselineskip]\noindent
\hspace*{0.25in}$5\,\,\varphi_{_{6,2,3}}-6 \, \varphi_{_{6,2,4}}+ \, 1 \, \varphi_{_{6,2,5}}=0$ \ \ \ 
\\[1\baselineskip]\indent

\end{proposition}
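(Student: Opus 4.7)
The plan is to apply the procedure established in the Example section, adapted to the $\delta=2$ setting. For each degree $D\in\{4,5,6\}$, I would start by writing $f_D$ in quasi-binomial form and obtain $f_D^{(2)}$ by truncating $\overset{\rightharpoonup}{\mathcal{R}}$ from the top by two entries and multiplying by $D(D-1)$; thus $f_D^{(2)}$ is a quasi-binomial polynomial of degree $D-2$ whose parameter vector consists of the first $D-2$ entries of $\overset{\rightharpoonup}{\mathcal{R}}$. For each admissible $\rho$, I then compute $\varphi_{_{D,2,\rho}}=\overline{f_D^{(2)}(\mathcal{R}^{(\rho)})}$. Letting $\overset{\rightharpoonup}{\mathcal{Z}}$ be the quasi-binomial vector of $\mathcal{R}^{(\rho)}$ (of length $D-\rho$), the dual statement about derivative truncation gives $\overset{\rightharpoonup}{\mathcal{Z}}\le\overset{\rightharpoonup}{\mathcal{R}}$, so the entries of $\overset{\rightharpoonup}{\mathcal{Z}}$ literally agree with the leading entries of $\overset{\rightharpoonup}{\mathcal{R}}$. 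Then I apply the normalized Girard-Waring identity (equation \ref{cKappa}) with $n=D-\rho$ to expand each mean power $\overline{z^m}$ for $m\le D-2$ in terms of $\overset{\rightharpoonup}{\mathcal{R}}$.

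Plugging these expansions into the monomial-by-monomial decomposition of $f_D^{(2)}$ and using linearity of averaging, each $\varphi_{_{D,2,\rho}}$ becomes an explicit polynomial in $\overset{\rightharpoonup}{\mathcal{R}}$ whose basis consists of the integer partitions of $D-2$; the basis sizes are $2$, $3$, $5$ for $D=4,5,6$ respectively. Once these explicit expressions are tabulated for a fixed $D$, finding every linear relation $\sum_\rho\alpha_\rho\,\varphi_{_{D,2,\rho}}=0$ reduces to row reducing the matrix whose rows are indexed by $\rho$ and whose columns are indexed by the basis monomials. The dimension of its kernel gives the dimension of the space of relations, verifying in particular the parenthetical claim in case D that "any 3 are linearly dependent; any 2 are independent." To present the proposition it then suffices to display the kernel basis vectors in the form stated.

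The main obstacle is not conceptual but bookkeeping: one must track the multinomial-weighted constants $c_{_{\,\overset{\rightharpoonup}{\kappa}}}$ in the normalized Girard-Waring formula carefully for each value of $n=D-\rho$, since the coefficients change as $\rho$ varies (the same monomial in $\overset{\rightharpoonup}{\mathcal{R}}$ acquires different numerical weights depending on the family size $n$). The special structure that makes nontrivial relations exist, despite more basis elements than free $\alpha$s, is exactly the "common term" phenomenon highlighted in Remark (2) of the Example section: low-order quasi-binomial parameters appear with coefficients proportional across $\rho$, yielding far greater kernel dimension than naive dimension counting suggests. Verification of each stated identity then amounts to substituting the proposed $\alpha$-vector and checking termwise cancellation in each basis monomial; for $D=6$ this check is most cleanly delegated to a symbolic algebra system, as the authors seem to have done in constructing Tables $\boldsymbol{\alpha.4}$ through $\boldsymbol{\alpha.6}$.
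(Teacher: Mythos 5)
Your proposal follows essentially the same route the paper takes: express each $\varphi_{_{D,2,\rho}}$ in the quasi-binomial parameters of $f$ via the normalized Girard--Waring expansions (using that the derivative roots' parameter vector is a truncation of $\overset{\rightharpoonup}{\mathcal{R}}$), then find all $\alpha$-relations by row reduction over the partition-indexed basis of weight $D-2$, exactly the procedure the paper summarizes after its degree-4 example and implements in Tables $\boldsymbol{\varphi.D}$ and $\boldsymbol{\alpha.D}$. The only slip is cosmetic: the ``any 3 dependent, any 2 independent'' parenthetical you cite belongs to other propositions, not to this $\delta=2$ list, but that does not affect the argument.
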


\begin{proposition}
The following is a partial list of linear relationships
among means of a polynomial's $3^{\text{rd}}$ derivative values when
evaluated at roots of derivatives.

\begin{tabbing}
\hspace{0.4in}\=\hspace{1.5in}\=\kill
\indent
A. \>{Degree$(f)=2$}\>{N/A}\\
\indent
B. \>{Degree$(f)=3$} \>{N/A}\\
\indent
C. \>{Degree$(f)=4$} \>{N/A}
\end{tabbing}

D. \ Degree$\left(f\right)=5$
\\[-1.25\baselineskip] 
\begin{tabbing}
\hspace{0.25in}\=\kill
\>$1 \,\, \varphi_{_{5,3,0}}-3\, \varphi_{_{5,3,2}}=0$ 
\\[-0.5\baselineskip] 
\end{tabbing}

\indent
E. \ Degree$\left(f\right)=6$
\\[0.5\baselineskip]\indent 
$\,1 \,\, \varphi_{_{6,3,0}}+\,\,9\, \varphi_{_{6,3,4}}=0$

$\,1\,\,\varphi_{_{6,3,1}}+\,\,5\, \varphi_{_{6,3,3}}=0$

$\,1\,\,\varphi_{_{6,3,2}}+2\, \varphi_{_{6,3,4}}=0$

$\,1\,\, \varphi_{_{6,3,4}}\,-1\, \varphi_{_{6,3,5}}=0$   

\end{proposition}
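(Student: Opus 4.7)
The plan is to reduce each $\varphi_{D,3,\rho}$ appearing in the statement to an explicit polynomial in the quasi-binomial parameters of $f_D$, after which every displayed linear combination collapses to a numerical check.

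First, invoke the Appell-sequence identity $f^{(3)}_D = \frac{D!}{(D-3)!}\,f_{D-3}$ (where $f_{D-3}$ inherits the first $D-3$ quasi-binomial parameters of $f_D$) to rewrite
$$\varphi_{D,3,\rho}\;=\;\frac{D!}{(D-3)!}\,\overline{f_{D-3}\!\left(\mathcal{R}^{(\rho)}_{f_D}\right)}.$$
Because $\mathcal{R}^{(\rho)}_{f_D}$ is the root family of $f^{(\rho)}_D$, whose first $D-\rho$ quasi-binomial parameters agree with those of $f_D$, and because $f_{D-3}$ has degree $D-3$, the result depends only on $\bar{r}$, $\bar{\bar{r}}$, and (when $D=6$) $\bar{\bar{\bar{r}}}$.

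Next, expand $f_{D-3}(x)$ monomially and, setting $n := D-\rho$, substitute the normalized Girard-Waring identities from the row $\overline{\text{GW}}.\boldsymbol{\mathfrak{n}}$=$n$ for each mean $\overline{x^j}$ with $j\le D-3$. This is precisely the mechanical procedure executed in the paper's extended degree-$4$ example, now applied with $f_{D-3}$ in place of the original polynomial. For $D=5$, where $f_{D-3}=f_2$ is quadratic, each $\varphi_{5,3,\rho}$ reduces to a scalar multiple of $\mathbb{V} = 2(\bar{r}^{\,2} - \bar{\bar{r}})$; for $D=6$, where $f_{D-3}=f_3$ is cubic, each $\varphi_{6,3,\rho}$ reduces to a scalar multiple of $\mathbb{W} = 2\bar{r}^{\,3} - 3\bar{r}\,\bar{\bar{r}} + \bar{\bar{\bar{r}}}$. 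Once the resulting scalars are tabulated, every displayed linear combination becomes an immediate arithmetic identity in the corresponding one-dimensional vector space.

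The main obstacle is purely bookkeeping: one must carry out the Girard-Waring substitutions for $n$ ranging over $1$ through $6$ and track the numerical coefficients carefully. There is no conceptual subtlety, and the reason is revealing: because $f_{D-3}$ has degree at most three here, the space of possible outcomes is the one-dimensional line spanned by $\mathbb{V}$ (or $\mathbb{W}$). For the higher-$\delta$ or higher-$D$ analogues of this proposition that accidental collapse would fail, and one would instead be forced into the full linear-algebra search sketched in Remarks~(1) and~(2) following the degree-$4$ example.
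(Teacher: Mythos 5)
Your reduction is exactly the paper's own route: the Appell property $f_D^{(3)}=\frac{D!}{(D-3)!}\,f_{D-3}$, the fact that the quasi-binomial parameters of $\mathcal{R}^{(\rho)}_{f_D}$ are the truncation of those of $f_D$, and then the normalized Girard--Waring substitutions; this is precisely the mechanical procedure that generates the tabulated $\varphi_{D,\delta,\rho}$ on which this proposition rests. Your structural observation is also correct: one finds $\overline{f_2\left(\mathcal{F}_n\right)}=\frac{n-2}{2}\,\mathbb{V}$ and $\overline{f_3\left(\mathcal{F}_n\right)}=\frac{n(n-3)}{2}\,\mathbb{W}$ with $n=D-\rho$, so for $D=5$ and $D=6$ everything lands on the line spanned by $\mathbb{V}$ or $\mathbb{W}$ and each relation is decided by a scalar check.

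However, your closing claim that ``every displayed linear combination becomes an immediate arithmetic identity'' does not survive that scalar check for the statement as printed, and a proof asserting it has a gap. For $D=6$ your scalars (dropping the common factor $120\,\mathbb{W}$) are $9,\,5,\,2,\,0,\,-1,\,-1$ for $\rho=0,1,2,3,4,5$. Three of the four printed sextic relations then verify ($1\cdot 9+9\cdot(-1)=0$, $1\cdot 2+2\cdot(-1)=0$, $-1-(-1)=0$), but the second fails: $\varphi_{6,3,3}$ is identically zero (it is the mean of $f^{(3)}$ over its own roots), while $\varphi_{6,3,1}=600\,\mathbb{W}\neq 0$ for a generic sextic, so $1\,\varphi_{6,3,1}+5\,\varphi_{6,3,3}\neq 0$. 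What your computation actually proves is $1\,\varphi_{6,3,1}+5\,\varphi_{6,3,4}=0$, which is the version appearing in the paper's later sextic proposition as item (h$'$), the derivative inheritance of $1\,\varphi_{5,2,0}+5\,\varphi_{5,2,3}=0$; the index $3$ in the present statement is evidently a misprint for $4$. So your method is sound and matches the paper's, but the bookkeeping you defer is exactly the step that exposes (and lets you correct) the misprinted index rather than certifying the statement verbatim.
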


\begin{proposition}
The following is a partial list of linear relationships
among means of a polynomial's $4^{\text{th}}$ derivative values when
evaluated at roots of derivatives.

\begin{tabbing}
\hspace{0.4in}\=\hspace{1.5in}\=\kill
\indent
A. \>{Degree$(f)=2$} \>{N/A}\\ \indent
B. \>{Degree$(f)=3$} \>{N/A}\\ \indent
C. \>{Degree$(f)=4$} \>{N/A}\\ \indent 
D. \>{Degree$(f)=5$} \>{N/A}\\[-1\baselineskip]
\end{tabbing}

E. \ Degree$(f)=6$

$\,3 \,\, \varphi_{_{6,4,0}}\,-\,4 \,\varphi_{_{6,4,1}}\,=0$

$\,1\,\, \varphi_{_{6,4,1}}\,-\,3 \, \varphi_{_{6,4,3}}\,=0$

$\,1\,\, \varphi_{_{6,4,2}}\,-2 \, \varphi_{_{6,4,3}}\,=0$

$\,1\,\, \varphi_{_{6,4,3}}\,+1 \, \varphi_{_{6,4,5}}\,=0$  

\end{proposition}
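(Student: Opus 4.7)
The plan is to apply the Section~2 procedure directly, taking advantage of the fact that $f_{_6}^{(4)}$ is merely a quadratic, so every $\varphi_{_{6,4,\rho}}$ is forced into a very small subspace of quasi-binomial monomials. Writing $f_{_6}$ in quasi-binomial form and using the Appell identity $f^{\,\prime}_{_D}=D\cdot f_{_{D-1}}$ gives
$$ f_{_6}^{(4)}(x) \;=\; 360\,\bigl(x^{2} - 2\,\overline{r}\,x + \overline{\overline{r}}\bigr), $$
so that $\overline{f_{_6}^{(4)}(\mathcal{R}^{(\rho)})}=360\bigl(\overline{x^{2}}-2\,\overline{r}\,\overline{x}+\overline{\overline{r}}\bigr)$, where the averaging of $\overline{x}$ and $\overline{x^{2}}$ is taken over the family $\mathcal{R}^{(\rho)}$.

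Next, for each $\rho\in\{0,1,2,3,4,5\}$, set $n_\rho:=6-\rho$. By the dual part of the proposition on quasi-binomial parameters under differentiation, $\overset{\rightharpoonup}{\mathcal{R}^{(\rho)}}$ is the length-$n_\rho$ truncation of $\overset{\rightharpoonup}{\mathcal{R}}$; in particular, whenever $n_\rho\ge 2$, its first two entries are still $\overline{r}$ and $\overline{\overline{r}}$. Applying the normalized Girard-Waring identity (equivalently Newton's identity $p_{_2}=e_{_1}^{2}-2\,e_{_2}$) to a family of size $n_\rho$ gives $\overline{x}=\overline{r}$ and $\overline{x^{2}}=n_\rho\,\overline{r}^{\,2}-(n_\rho-1)\,\overline{\overline{r}}$. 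The degenerate case $n_\rho=1$ (the lone root $\overline{r}$ of $f_{_6}^{(5)}$) is handled by direct evaluation and agrees with the same formula.

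Substitution into the display above and simplification should yield the uniform closed form
$$ \varphi_{_{6,4,\rho}} \;=\; 360\,(4-\rho)\,\bigl(\overline{r}^{\,2}-\overline{\overline{r}}\bigr) \qquad (\rho\in\{0,1,2,3,4,5\}), $$
where $\varphi_{_{6,4,4}}=0$ is consistent with the general principle that the mean of any polynomial over its own root family vanishes. Since each nonzero $\varphi_{_{6,4,\rho}}$ is a scalar multiple of the single quantity $\overline{r}^{\,2}-\overline{\overline{r}}$, they all live in a one-dimensional subspace and any two are linearly dependent. The four stated relations then drop out by matching the integers $(4-\rho)$: namely $3\cdot 4=4\cdot 3$, $1\cdot 3=3\cdot 1$, $1\cdot 2=2\cdot 1$, and $1\cdot 1=(-1)\cdot(-1)$.

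The only step that requires bookkeeping is invoking Girard-Waring for five different family sizes $n_\rho\in\{6,5,4,3,2,1\}$, but since $f_{_6}^{(4)}$ is quadratic only the mean $\overline{x^{2}}$ enters nontrivially, and its dependence on $n_\rho$ is controlled by a single two-term identity. There is no real conceptual obstacle; the substance of the proposition is essentially the observation that $f_{_D}^{(D-2)}$ is quadratic, so its mean on any root family of a derivative of $f_{_D}$ is determined by just the first two quasi-binomial parameters of $f_{_D}$ together with a linear function of $n_\rho$. One sees from the same argument that the analogous proposition will hold for every degree $D$ at derivative order $\delta=D-2$, with all the corresponding $\varphi_{_{D,D-2,\rho}}$ proportional to $\overline{r}^{\,2}-\overline{\overline{r}}$.
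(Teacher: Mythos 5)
Your proposal is correct and follows essentially the same route as the paper: the Section 2 procedure (normalized Girard-Waring together with the inheritance of quasi-binomial parameters under differentiation), which here collapses to the single identity $\overline{x^{2}}=n\,\overline{r}^{\,2}-(n-1)\,\overline{\overline{r}}$ because $f_{6}^{(4)}$ is quadratic. The resulting closed form $\varphi_{6,4,\rho}=360\,(4-\rho)\bigl(\overline{r}^{\,2}-\overline{\overline{r}}\bigr)$ is consistent with the paper's Table $\boldsymbol{\varphi.2}$ (equivalently $\varphi_{6,4,\rho}=360\,\varphi_{2,0,\rho-4}$), and it immediately yields all four stated relations.
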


\begin{proposition}
The following is a partial list of linear relationships
among means of a cubic polynomial's function, derivatives, and/or
antiderivatives values when evaluated at roots of derivatives and 
antiderivatives. \\

\ Degree$(f)=3$: 

$2\,\,\varphi_{_{3,-2,0}}\,$ $-5\,\,\varphi_{_{3,-2,1}}\, +3\,\,\varphi_{_{3,-2,2}}\,=0$

$1\,\,\varphi_{_{3,-2,-1}}\,-3\,\,\varphi_{_{3,-2,1}}\,+2\,\,\varphi_{_{3,-2,2}}\,=0$

$5\,\,\varphi_{_{3,-1,0}}\,-6\,\,\varphi_{_{3,-1,1}}\,+1\,\varphi_{_{3,-1,2}}\,=0$

$1 \,\, \varphi_{_{3,0,-1}}\,+2\,\, \varphi_{_{3,0,1}}\,=0$

$1 \,\, \varphi_{_{3,0,-2}}\,+5\,\, \varphi_{_{3,0,1}}\,=0$

$1 \,\, \,\varphi_{_{3,0,-3}}\,+\,9\,\, \varphi_{_{3,0,1}}\,=0$

$1 \,\, \varphi_{_{3,1,-1}}\,\,-2\,\, \varphi_{_{3,1,0}}\,=0$

$1 \,\, \varphi_{_{3,1,-2}}\,\,-\,3\,\, \varphi_{_{3,1,0}}\,=0$

$3 \,\, \varphi_{_{3,1,-3}}\,-\,4\,\, \varphi_{_{3,1,-2}}\,=0$
\end{proposition}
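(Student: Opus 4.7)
The plan is to apply the computational procedure summarized earlier in the paper to each of the nine claimed identities and verify that the resulting linear combinations collapse to zero as polynomials in the quasi-binomial parameters of $f_3$ together with all constants of integration that appear. Write $f_3(x) = x^3 - 3\,\overline{r}\,x^2 + 3\,\overline{\overline{r}}\,x - \overline{\overline{\overline{r}}}$ with leading coefficient $a=1$. By the Appell property of the quasi-binomial representation, the derivatives $f_3'$ and $f_3''$ are, up to the scalars $3$ and $6$, quasi-binomial polynomials on the truncations $(\overline{r},\overline{\overline{r}})$ and $(\overline{r})$ of $\overset{\rightharpoonup}{\mathcal{R}}=(\overline{r},\overline{\overline{r}},\overline{\overline{\overline{r}}})$, while the $m$-fold antiderivatives $f_3^{(-m)}$ are, up to scalars, quasi-binomial polynomials on the extensions $(\overline{r},\overline{\overline{r}},\overline{\overline{\overline{r}}},c_1,\ldots,c_m)$ formed by adjoining the $m$ constants of integration.

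First I would tabulate the distinct $\varphi_{3,\delta,\rho}$ values appearing in the statement, with $\delta \in \{-2,-1,0,1\}$ and $\rho \in \{-3,-2,-1,0,1,2\}$. For each pair, the polynomial being evaluated is $f_3^{(\delta)}$ in its quasi-binomial form, and the root family $\mathcal{R}^{(\rho)}$ carries the quasi-binomial parameter vector obtained by the appropriate truncation or extension. Expanding $f_3^{(\delta)}$ at a generic root of $\mathcal{R}^{(\rho)}$ and averaging turns $\varphi_{3,\delta,\rho}$ into a linear combination of normalized power sums $\overline{x^k}$ over $\mathcal{R}^{(\rho)}$; each $\overline{x^k}$ is then rewritten in the quasi-binomial parameters of $\mathcal{R}^{(\rho)}$ via the mean Girard--Waring tables supplied in the paper (tables up through $\boldsymbol{\mathfrak{n}}=6$ suffice, the top case being $\varphi_{3,0,-3}$). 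The Appell property then identifies the first three entries of $\mathcal{R}^{(\rho)}$'s parameter vector with $\overline{r},\overline{\overline{r}},\overline{\overline{\overline{r}}}$, leaving at most $|\rho|$ additional constants present in the case $\rho<0$.

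Second I would verify each identity by assembling its linear combination and checking that it vanishes as a polynomial in $\overline{r},\overline{\overline{r}},\overline{\overline{\overline{r}}}$ together with all constants of integration involved. For the three identities with antiderivatives in the $\delta$ slot, the constants of integration of $f_3^{(\delta)}$ contribute to each $\varphi_{3,\delta,\rho}$ in a form that is uniform across $\rho$ (modulo the factor $\overline{r}$ supplied by the first Appell parameter), so they cancel in a linear combination precisely when the scalar coefficients sum to zero, which the proposed coefficients do ($2-5+3=0$, $1-3+2=0$, $5-6+1=0$). For the identities with antiderivatives in the $\rho$ slot and $\delta \ge 0$, the previously-stated proposition on independence of $\varphi_{D,\delta,-m}$ from the constants of integration of $f_3^{(-m)}$ removes those variables automatically; in the hybrid case $(\delta,\rho)=(-2,-1)$ the bookkeeping reveals that the residual constants cancel as well, which is essentially the content of that identity.

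The only nuisance, rather than a genuine obstacle, is the case $\varphi_{3,0,-3}$: it averages the cubic over a six-root family, producing the bulkiest intermediate expression, though only normalized power sums up to order three are needed. Every conceptual tool --- Appell truncation/extension, mean Girard--Waring, and independence of $\varphi_{D,\delta,-m}$ from constants of integration in the $\delta \ge 0$ cases --- is already developed in the paper, so the proof is essentially bookkeeping. A more structural derivation, of the kind lamented in the Remarks, still eludes us.
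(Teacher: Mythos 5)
Your proposal is correct and follows essentially the same route as the paper, which offers no separate proof for this proposition but derives such relations exactly by the summarized procedure: express each $\varphi_{3,\delta,\rho}$ via the quasi-binomial (Appell) truncation/extension and the normalized Girard--Waring tables, use the sum-to-zero of the coefficients (together with the invariance proposition for $\rho<0$, $\delta\ge 0$) to dispose of the constants of integration, and verify the remaining polynomial identity in $\overline{r},\,\overline{\overline{r}},\,\overline{\overline{\overline{r}}}$. Your handling of the hybrid case $(\delta,\rho)=(-2,-1)$ is at the same level of explicit bookkeeping as the paper itself, so nothing further is needed.
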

\noindent
The last six of the above deserve particular comment.  In the middle three
above, in the first terms (with coefficient $1$) the cubic function $f$ is
being averaged over the root families $\mathcal{R}^{\left(-1\right)},$
$\mathcal{R}^{\left(-2\right)},$ $\mathcal{R}^{\left(-3\right)}$, which are
the root families of $\int f,\iint f,\,$and$\iiint f\,$.  As noted in an
earlier proposition, these results are independent of choice of constants of
integration. \\

The following essentially restate above results with a different perspective.
 See Tables $\boldsymbol{\alpha.4}$ through $\boldsymbol{\alpha.6}$, \\[\baselineskip]
\noindent
Note:  Regarding the alphabetical labels to the right of some relationships, we have used double-letter hybrid
labels to indicate that the relationship is linearly dependent upon the
denoted pair of preceding relationships, in cases of worthy of attention since the relationship coefficients vector is highly symmetric,

\begin{proposition} In any quartic polynomial the following relations hold:

\begin{tabbing}
\hspace{0.25in}\=\hspace{0.4in}\=\hspace{2.5in}\=\kill
\>{$(0)$} \>{$5 \,\, \varphi_{_{4,0,1}}\,-6\,\varphi_{_{4,0,2}}+ 1\,\varphi_{_{4,0,3}}=0 $}\>{\textup{(b)}} \\

\>{$(1)$} \>{$1\,\varphi_{_{4,1,0}}+ 2 \, \varphi_{_{4,1,2}}=0$ }\>{\textup{(f)}}\\
\>{} \>{$1\,\, \varphi_{_{4,1,2}}-$ $1\,\,\varphi_{_{4,1,3}}=0$
\ }\>{\textup{(a')}$=D$\textup{(a)}} \\

\>{$(2)$} \>{$1\,\varphi_{_{4,2,0}}-2$ $\,\varphi_{_{4,2,1}}=0$
\ }\>{\textup{(g)}}\\
\>{} \>{$\,1\,\varphi_{_{4,2,1}}+$ $1\,\varphi_{_{4,2,3}}=0$
.}\>{\textup{(e')}$=D$\textup{(e)}}\\
\end{tabbing}
\end{proposition}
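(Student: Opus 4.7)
The plan is to verify the five stated identities by combining two tools already developed in the paper: (i) the Example-section procedure of expanding each $\varphi_{4,\delta,\rho}$ via the mean Girard--Waring tables into the basis of quartic symmetric-monomial parameters; and (ii) the derivative-inheritance observation that, in quasi-binomial form, $f_D^{\,\prime}=D\cdot f_{D-1}$ and $\mathcal{R}^{(\rho)}_{f_D}=\mathcal{R}^{(\rho-1)}_{f_{D-1}}$ for $\rho\ge 1$. Together these yield the transport law
\[
\varphi_{D,\delta,\rho}\;=\;D\cdot\varphi_{D-1,\,\delta-1,\,\rho-1}\qquad(\delta\ge 1,\;\rho\ge1),
\]
and every marginal label of the form $D(\cdot)$ corresponds to exactly one application of this law.

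Identity~$(0)$ is already fully established in the Example section as relation~(b): each of $\varphi_{4,0,1},\varphi_{4,0,2},\varphi_{4,0,3}$ was expanded via Tables $\overline{\text{GW}}.\boldsymbol{\mathfrak{n}}=2$ and $\overline{\text{GW}}.\boldsymbol{\mathfrak{n}}=3$ into the five basis elements $\overline{r}^{\,4},\,\overline{r}^{\,2}\overline{\overline{r}},\,\overline{r}\,\overline{\overline{\overline{r}}},\,\overline{\overline{r}}^{\,2},\,\overline{\overline{\overline{\overline{r}}}}$, and the coefficient vector $(5,-6,1)$ is seen to annihilate each of the five columns. For the two inherited identities the transport law delivers a one-line proof apiece: applied to relation~(a), $\varphi_{3,0,1}-\varphi_{3,0,2}=0$, it gives $\tfrac14\varphi_{4,1,2}-\tfrac14\varphi_{4,1,3}=0$, i.e.\ $(a')$; applied to relation~(e), $\varphi_{3,1,0}+\varphi_{3,1,2}=0$, it gives $\tfrac14\varphi_{4,2,1}+\tfrac14\varphi_{4,2,3}=0$, i.e.\ $(e')$. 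Both (a) and (e) are supplied by earlier propositions.

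The two remaining identities, $(f)$ and $(g)$, have a $\rho=0$ slot that is out of reach of the transport law, so I would compute them directly. For $(f)$, expanding $f_4^{(1)}(x)=4(x^3-3\overline{r}x^2+3\overline{\overline{r}}x-\overline{\overline{\overline{r}}})$ and averaging over $\mathcal{R}_{f_4}$ using the $\boldsymbol{\mathfrak{n}}=4$ power-sum means $\overline{r^2}=4\overline{r}^{\,2}-3\overline{\overline{r}}$ and $\overline{r^3}=16\overline{r}^{\,3}-18\overline{r}\,\overline{\overline{r}}+3\overline{\overline{\overline{r}}}$ yields $\varphi_{4,1,0}=16\overline{r}^{\,3}-24\overline{r}\,\overline{\overline{r}}+8\overline{\overline{\overline{r}}}$; combining with $\varphi_{4,1,2}=4\varphi_{3,0,1}=-8\overline{r}^{\,3}+12\overline{r}\,\overline{\overline{r}}-4\overline{\overline{\overline{r}}}$ (transport law plus the value $\varphi_{3,0,1}=-\mathbb{W}$ from the introduction), the sum $\varphi_{4,1,0}+2\varphi_{4,1,2}$ collapses termwise to zero. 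Identity~$(g)$ is handled by exactly the same recipe, with $f_4^{(2)}(x)=12(x^2-2\overline{r}x+\overline{\overline{r}})$ averaged first over $\mathcal{R}_{f_4}$ and then over $\mathcal{R}^{(1)}_{f_4}$.

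The main obstacle is structural rather than computational: the ambient basis space of quartic monomials is five-dimensional, so each single-parameter identity of the form $\alpha\varphi+\beta\varphi=0$ is \emph{a priori} five scalar constraints on one ratio, and the simultaneous vanishing looks miraculous term by term. As the Remarks after the Example emphasize, it is precisely this abundance of cancellation that deserves a conceptual explanation; the verification itself, once the Girard--Waring tables and the transport law are in hand, is purely mechanical.
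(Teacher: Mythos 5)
Your verification is correct and follows essentially the paper's own route: relations (0), (f), (g) come from the normalized Girard--Waring expansion into quasi-binomial parameters (relation (b) being exactly the Example-section computation), while (a') and (e') follow by the derivative-inheritance mechanism the paper encodes in its $D(\cdot)$ labels, which your transport law $\varphi_{D,\delta,\rho}=D\cdot\varphi_{D-1,\delta-1,\rho-1}$ (valid for $\delta\ge 1$, $\rho\ge 1$ since $f_D'=D\cdot f_{D-1}$ in quasi-binomial form) merely makes explicit. The quoted intermediate values ($\varphi_{4,1,0}=16\,\overline{r}^{\,3}-24\,\overline{r}\,\overline{\overline{r}}+8\,\overline{\overline{\overline{r}}}$, $\varphi_{4,1,2}=4\,\varphi_{3,0,1}=-8\,\overline{r}^{\,3}+12\,\overline{r}\,\overline{\overline{r}}-4\,\overline{\overline{\overline{r}}}$, and the column-wise annihilation by $(5,-6,1)$) all check out against the paper's Tables $\boldsymbol{\varphi.3}$, $\boldsymbol{\varphi.4}$ and $\overline{\text{GW}}.\boldsymbol{\mathfrak{n}}$=2,3,4.
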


\begin{proposition}In any quintic polynomial the following relations hold:

\begin{tabbing}
\hspace{0.25in}\=\hspace{0.4in}\=\hspace{3.0in}\=\kill
\>{$(0)$}  \>{$1\,\, \varphi_{_{5,0,1}}-3 \, \varphi_{_{5,0,3}}+2 \, \varphi_{_{5,0,4}}=0$}  \>{\textup{(c)}}\\
\>{}  \>{$2 \,\, \varphi_{_{5,0,2}}-5 \,\varphi_{_{5,0,2}}+3 \, \varphi_{_{5,0,4}}=0$}  \>{\textup{(d)}}\\[0.5\baselineskip]
\>{} \>{$1\,\, \varphi_{_{5,0,1}}-\,2 \, \varphi_{_{5,0,2}}+2 \,\varphi_{_{5,0,3}}-1\,\varphi_{_{5,0,4}}=0$}\>{\textup{(cd)}} \\[1\baselineskip]
 
\>{$(1)$} \>{$5 \,\, \varphi_{_{5,1,2}}-6 \, \varphi_{_{5,1,3}}+ 1\,\varphi_{_{5,1,4}}=0$} \>{\textup{(b')}$=D$\textup{(b)}} \\[1\baselineskip]
 
\>{$(2)$} \>{$1\,\,\varphi_{_{5,2,0}}+ 5 \, \varphi_{_{5,2,3}}=0$ \ \ } \>{\textup{(h)}}\\
\>{}  \>{$1\,\, \varphi_{_{5,2,1}}+ 2 \,\varphi_{_{5,2,3}}=0$} \>{\textup{(f')}$=D$\textup{(f)}}\\
\>{}  \>{$1\,\, \varphi_{_{5,2,3}}- 1\,\varphi_{_{5,2,4}}=0$}  \>{\textup{(a'')}$=D$\textup{(a')}$=D^2$\textup{(a)}} \\[1\baselineskip]
\>{$(3)$}  \>{$1\,\, \varphi_{_{5,3,0}}-\,3 \, \varphi_{_{5,3,2}}=0$}  \>{ \textup{(i)}} \\
\>{}  \>{$1 \,\, \varphi_{_{5,3,1}}-2 \, \varphi_{_{5,3,2}}=0$}  \>{\textup{(g')}$=D$\textup{(g)}}\\
\>{}  \>{$1 \,\, \,\varphi_{_{5,3,2}}+1\, \varphi_{_{5,3,4}}=0$.} \>{\textup{(e'')}$=D$\textup{(e')}$=D^2$\textup{(e)}}\\

\end{tabbing}
\end{proposition}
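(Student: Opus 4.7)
The plan is twofold: (I) derive the primed relations by derivative inheritance from the earlier propositions, and (II) derive the fundamental unprimed relations (c), (d), (h), (i) directly by the Girard--Waring/quasi-binomial procedure of Section~2.

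For the inheritance, if $f_5$ is any quintic and $n\ge 1$, then $g:=f_5^{(n)}$ is a polynomial of degree $5-n$, and its $\rho$-th derivative root family coincides with the $(\rho+n)$-th derivative root family of $f_5$. Hence for any $\delta\ge 0$,
\[
\varphi^{(f_5)}_{5,\,\delta+n,\,\rho+n}
= \overline{f_5^{(\delta+n)}\!\left(\mathcal{R}^{(\rho+n)}_{f_5}\right)}
= \overline{g^{(\delta)}\!\left(\mathcal{R}^{(\rho)}_g\right)}
= \varphi^{(g)}_{5-n,\,\delta,\,\rho}.
\]
Because the earlier propositions hold \emph{universally} for polynomials of degree $5-n$, they apply to $g$, so any valid relation $\sum_\rho \alpha_\rho \varphi_{5-n,\delta,\rho}=0$ transports to a relation on $\varphi^{(f_5)}_{5,\,\delta+n,\,\rho+n}$ with identical coefficients. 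Applied to the quartic identity (b) this yields (b'); applied iteratively to the cubic identity (a) it yields (a') and (a''); applied to the cubic identity (e) it yields (e') and (e''); applied to (f) it yields (f'); applied to (g) it yields (g'). In particular, the entire block~(1) of the proposition is (b) transported through $f_5'$, and block~(3) is the cubic/quadratic content transported through $f_5''$ and $f_5'''$.

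For the fundamental new relations (c), (d), (h), (i), write $f_5$ in quasi-binomial form with parameter vector $\overset{\rightharpoonup}{\mathcal{R}}=(\overline r,\overline{\overline r},\overline{\overline{\overline r}},\overline{\overline{\overline{\overline r}}},\overline{\overline{\overline{\overline{\overline r}}}})$ and execute the procedure of Section~2. For each $\rho$, the ordered family $\overset{\rightharpoonup}{\mathcal{R}^{(\rho)}}$ is the $(5-\rho)$-truncation of $\overset{\rightharpoonup}{\mathcal{R}}$; use Tables $\overline{\text{GW}}.\boldsymbol{\mathfrak{n}}{=}5-\rho$ to express each mean power $\overline{s^j}$ over $\mathcal{R}^{(\rho)}$ as a polynomial in these truncated parameters, substitute into $f_5^{(\delta)}(s)$, and average. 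This writes each $\varphi_{5,\delta,\rho}$ explicitly as a $\mathbb{Z}$-linear combination of the $p(5-\delta)$ weight-$(5-\delta)$ monomials in $\overset{\rightharpoonup}{\mathcal{R}}$. Assembling these coefficient vectors as rows of a matrix and row-reducing yields the stated null spaces; the combined relation (cd) is then checked as the literal subtraction $(c)-(d)$, which simplifies immediately to $\varphi_{5,0,1}-2\varphi_{5,0,2}+2\varphi_{5,0,3}-\varphi_{5,0,4}=0$.

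The main obstacle is the $\delta=0$ block. Here the four vectors $\varphi_{5,0,\rho}$ for $\rho\in\{1,2,3,4\}$ live in a $p(5)=7$-dimensional monomial basis, with no \emph{a priori} guarantee that their null space is non-trivial, let alone two-dimensional. As Remarks~(2) and~(4) of the quartic example anticipate, the two-dimensional dependence emerges only because of the shared-term and proportional-coefficient structure among the $\varphi_{5,0,\rho}$, which itself becomes visible only after the full computation summarized in Table $\boldsymbol{\varphi.5}$ has been executed. The calculation is large but purely mechanical, and the final dimension count ``any 3 dependent, any 2 independent'' within block~(0) reduces to row-reduction of the resulting $4\times 7$ integer matrix.
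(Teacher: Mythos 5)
Your proposal is correct and follows essentially the same route as the paper: the primed relations are exactly what the paper's labels \textup{(b')}$=D$\textup{(b)}, \textup{(a'')}$=D^2$\textup{(a)}, etc.\ indicate, namely derivative inheritance via $\varphi_{5,\delta+n,\rho+n}$ being the corresponding mean for the degree-$(5-n)$ polynomial $f_5^{(n)}$, while the new relations (c), (d), (h), (i) and the check $(\text{cd})=(\text{c})-(\text{d})$ come from the Section~2 procedure (normalized Girard--Waring expansion in the shared quasi-binomial parameters, followed by row reduction), which is precisely how the paper arrives at Tables $\boldsymbol{\varphi.5}$ and $\boldsymbol{\alpha.5}$. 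Your candid remark that the nontrivial null space in the $\delta=0$ block is only visible after the explicit computation matches the paper's own admission that no streamlined proof is known.
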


\noindent
Also note that item $(0\text{f})$ states that for any quintic $f$,
the average of the function values at the two roots of $f'''$ equals the
$2:1$ weighted average of $(i)$ the value of the function at the
sole root of $f^{(4)}$ and $(ii)$ the average of the
value of the function at the four roots of $f'$.  Item
$(0\text{g})$ states that for any quintic $f$, the average of the
function values at the two roots of $f'''$ also equals the $3:2$ weighted
average of $(i)$ the value of the function at the sole root of
$f^{(4)}$ and $(ii)$ the average of the value of the
function at the three roots of $f''$.  We leave the remaining
interpretations to the reader.
\begin{proposition}
In any sextic polynomial the following relations hold:

\begin{tabbing}
\hspace{0.25in}\=\hspace{0.4in}\=\hspace{1.5in}\=\hspace{1in}\=\hspace{1.5in}\=%
\hspace{0.5in}\=\hspace{0.5in}\=\kill
\>{$(0)$}  \>{$77\,\, \varphi_{_{6,0,1}}-120\,\varphi_{_{6,0,2}}+60 \, \varphi_{_{6,0,3}}-  20 \, \varphi_{_{6,0,4}}+3 \, \varphi_{_{6,0,5}}=0$}  
\>{} \>\>{\textup{(j)}}\\[1\baselineskip]
 
\>{$(1)$} \>{$1\,\, \varphi_{_{6,1,2}}-3 \, \varphi_{_{6,1,4}}+2 \, \varphi_{_{6,1,5}}=0$}    \>\>{\textup{(c')}$=D$\textup{(c)}}\\

\>{}  \>{$2 \,\, \varphi_{_{6,1,3}}-5 \, \varphi_{_{6,1,4}}+3 \, \varphi_{_{6,1,5}}=0$ }  \>\>{\textup{(d')}$=D$\textup{(d)}}\\[1\baselineskip]
 
\>{$(2)$} \>{$5\,\, \varphi_{_{6,2,3}}-6\,\varphi_{_{6,2,4}}+1\,\varphi_{_{6,2,5}}=0$}   \>\>{\textup{(b'')}$=D$\textup{(b')}$=D^2$\textup{(b)}}\\[1\baselineskip]
 
\>{$(3)$}  \>{$1\,\, \varphi_{_{6,3,0}}+9\varphi_{_{6,3,4}}=0$} \>\>{\textup{(k)}} \\

\>{}  \>{$1\,\,\varphi_{_{6,3,1}}+5 \, \varphi_{_{6,3,4}}=0$} \>\>{\textup{(h')}$=D$\textup{(h)}} \\

\>{} \>{$1\,\, \varphi_{_{6,3,2}}+2 \, \varphi_{_{6,3,4}}=0$} \>\>{\textup{(f'')}$=D$\textup{(f')}$=D^2$\textup{(f)}} \\

\>{} \>{$1\,\, \varphi_{_{6,3,4}}- 1\,\varphi_{_{6,3,5}}=0$} \>\>{\textup{(a''')}$=D$\textup{(a'')}$=D^2$\textup{(a')}$=D^3$\textup{(a)}}\\[1\baselineskip]
 
\>{$(4)$}  \>{$3\,\, \varphi_{_{6,4,0}}-4 \, \varphi_{_{6,4,1}}=0$ } \>\>{\textup{(l)}} \\ 

\>{}  \>{$1\,\, \varphi_{_{6,4,1}}-\,3\,\varphi_{_{6,4,3}}=0$ } \>\>{\textup{(i')}$=D$\textup{(i)}} \\

\>{}  \>{$1\,\, \varphi_{_{6,4,2}}-2 \, \varphi_{_{6,4,3}}=0$} \>\>{\textup{(g'')}$=D$\textup{(g')}$=D^2$\textup{(g)}} \\

\>{} \>{$1\,\,\varphi_{_{6,4,3}}+ 1\,\varphi_{_{6,4,5}}=0$} \>\>{\textup{(e''')}$=D$\textup{(e'')}$=D^2$\textup{(e')}$=D^3$\textup{(e)}}\\
\end{tabbing}
\end{proposition}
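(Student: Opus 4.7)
The plan is to split the eleven relations into two groups: three genuinely new identities at degree~6, those labeled $(\text{j})$, $(\text{k})$, $(\text{l})$, and eight inherited ones labeled with primes. The inherited ones come essentially for free from earlier propositions via a single differentiation lemma, while the three new ones require the direct expansion procedure from the Example section.

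First I would establish the inheritance lemma: \emph{if} $\sum_\rho \alpha_\rho\, \varphi_{D-1,\delta,\rho}(g)=0$ holds for every polynomial $g$ of degree $D-1$, \emph{then} $\sum_\rho \alpha_\rho\, \varphi_{D,\delta+1,\rho+1}(f)=0$ holds for every polynomial $f$ of degree $D$. The proof is one line via the earlier quasi-binomial identity $f'_{D}=D \cdot f_{D-1}$: taking $g := f'/D$ one has $g^{(\delta)} = f^{(\delta+1)}/D$, the roots of $g^{(\rho)}$ coincide with those of $f^{(\rho+1)}$, and the scalar factor $1/D$ cancels out of the homogeneous relation. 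Iterating this lemma $k$ times accounts for every $k$-primed label: $(\text{a}''')$ from the cubic identity $(\text{a})$ applied three times, $(\text{b}'')$ from the quartic $(\text{b})$ twice, $(\text{c}')$ and $(\text{d}')$ from the quintic identities, and similarly for $(\text{h}')$, $(\text{f}'')$, $(\text{i}')$, $(\text{g}'')$, $(\text{e}''')$.

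For each of the three new identities $(\text{j})$, $(\text{k})$, $(\text{l})$ I would run the brute-force procedure summarised after the quartic example. Expand every $\varphi_{6,\delta,\rho}$ appearing in the target relation as a polynomial in the quasi-binomial parameters $\overline{r}, \overline{\overline{r}}, \ldots$ of $f_6$, by first using the mean-normalised Girard--Waring formula to write $\overline{t^m}$ (for $t$ ranging over the relevant derivative-root family) as a polynomial in the quasi-binomial parameters of \emph{that family}, and then invoking the truncation rule to identify those parameters with the corresponding parameters of $f_6$ itself. Then verify that the prescribed linear combination of the resulting coefficient vectors vanishes identically. For $(\text{k})$ and $(\text{l})$ this is short: $f^{(3)}$ and $f^{(4)}$ are of low degree so the ambient space of monomials is small and the cancellation is essentially immediate.

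The main obstacle is identity $(\text{j})$. Here the five expansion vectors $\{\varphi_{6,0,\rho}\}_{\rho=1}^{5}$ live in the $11$-dimensional space spanned by the monomials indexed by the $p(6)=11$ integer partitions of $6$. Confirming that $(77,-120,60,-20,3)$ is, up to scalar, the unique element of the null space amounts to row-reducing the resulting $11 \times 5$ coefficient matrix. The computation is entirely mechanical but the bookkeeping across five separate Girard--Waring tables (for derivative-root families of sizes $5,4,3,2,1$) is where care is most needed, and matches the authors' observation that streamlined proofs of these fundamental identities have so far eluded them.
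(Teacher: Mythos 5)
Your proposal is correct and follows essentially the paper's own route: the nine primed relations are precisely the derivative-inherited ones the paper flags with its $D(\cdot)$ labels (your lemma via $g=f'/D$ makes that inheritance explicit, drawing on the earlier cubic, quartic and quintic propositions), while the three genuinely new relations (j), (k), (l) are verified by the paper's normalized Girard--Waring expansion plus truncation and row reduction, with (j) matching the coefficient vector $(77,-120,60,-20,3)$ recorded in Table $\boldsymbol{\alpha.6}$. The only slip is bookkeeping: the proposition contains twelve relations, nine inherited and three new, not eleven and eight, though your explicit list of labels is in fact complete.
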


\begin{proposition}
In any septic polynomial the following relations hold:

\begin{tabbing}
\hspace{0.256in}\=\hspace{0.5in}\=\hspace{0.5in}\=\hspace{0.5in}\=%
\hspace{0.5in}\=\hspace{0.5in}\=\hspace{0.5in}\=\hspace{0.5in}\=%
\hspace{0.5in}\=\hspace{0.5in}\=\hspace{0.5in}\=\hspace{0.5in}\=\kill
\hspace{0.25in}\=\hspace{0.4in}\=\hspace{2.5in}\=\hspace{1.5in}\=%
\hspace{0.5in}\=\hspace{0.5in}\=\kill
\>{ $(0)$}  \>{$85\,\, \varphi_{_{7,0,1}}-144\, \varphi_{_{7,0,2}}+90 \, \varphi_{_{7,0,3}}-     40 \, \varphi_{_{7,0,4}}+9 \, \varphi_{_{7,0,5}}=0$} 
\>{}  \>{\textup{(m)}} \\[0\baselineskip]

\>{}  \>{$82\,\, \varphi_{_{7,0,1}}-135\,\varphi_{_{7,0,2}}+75 \, \varphi_{_{7,0,3}}-25 \, \varphi_{_{7,0,4}}+3 \, \varphi_{_{7,0,6}}=0$} \>{}\>\textup{(n)}  \\[0.5\baselineskip]
\>{}  \>{$1\,\, \varphi_{_{7,0,1}}-3\, \varphi_{_{7,0,2}}+5 \, \varphi_{_{7,0,3}}- 5 \, \varphi_{_{7,0,4}}+3 \, \varphi_{_{7,0,5}}-1 \, \varphi_{_{7,0,6}}=0$} 
\>{}\>{\textup{(mn)}}
\end{tabbing}
\end{proposition}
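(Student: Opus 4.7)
The plan is to apply the procedure already established in the paper with $D=7$ and verify the two relations (m) and (n) directly; the third relation (mn) is then an immediate corollary, since a coordinatewise check of the $6$-tuples of coefficients shows $3\cdot(\mathrm{mn}) = (\mathrm{m}) - (\mathrm{n})$, namely $(85,-144,90,-40,9,0) - (82,-135,75,-25,0,3) = (3,-9,15,-15,9,-3) = 3\cdot(1,-3,5,-5,3,-1)$. This matches the parenthetical note in the degree-$7$ part of the earlier proposition that any three such relations are dependent and any two are independent.

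For each $\rho \in \{1,2,3,4,5,6\}$ I would first produce an explicit representative of $\varphi_{7,0,\rho}$ as a polynomial in the quasi-binomial parameters $\overline{r}, \overline{\overline{r}}, \ldots$ of $f_7$ by following four steps: (i) write $f_7(s) = \sum_{k=0}^{7}(-1)^{7-k}\binom{7}{k}\,\overline{r}^{(7-k)}\, s^k$ with $s$ ranging over $\mathcal{R}^{(\rho)}$, using the quasi-binomial expansion recalled at the start of the example section; (ii) average over $s \in \mathcal{R}^{(\rho)}$, pushing the mean past the sum by linearity so that only the mean powers $\overline{s^k}$ need to be resolved; (iii) substitute, for each $k$, the mean Girard--Waring expression for $\overline{s^k}$ read off from Table $\overline{\mathrm{GW}}.\boldsymbol{\mathfrak{n}}{=}7{-}\rho$, giving a polynomial in the quasi-binomial parameters $\overline{s}^{(i)}$ of the derivative $f^{(\rho)}$; and (iv) invoke the derivative-inheritance identities $\overline{s}^{(i)} = \overline{r}^{(i)}$ for $1 \le i \le 7-\rho$, which follow from the proposition asserting that the quasi-binomial vector of a derivative is a truncation of that of $f$.

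The six resulting polynomials then live in the $\mathbb{Q}$-vector space $V$ of dimension $15$ spanned by the monomials indexed by the integer partitions of $7$. Verification of (m), respectively (n), reduces to substituting the coefficient tuple $(85,-144,90,-40,9,0)$, respectively $(82,-135,75,-25,0,3)$, into $\sum_\rho \alpha_\rho\, \varphi_{7,0,\rho}$ and checking that the resulting element of $V$ is identically $0$, which is a finite and mechanical check of $15$ scalar equations per relation. Together with the reduction in the first paragraph, this proves all three asserted relations.

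The principal obstacle is the sheer bulk of symbolic arithmetic rather than anything conceptual. The worst case is $\rho = 1$, where the Girard--Waring expansion is over an $n = 6$ roots family with powers up through $k = 7$, and Remark (3) in the excerpt already warns that intermediate coefficients in $\varphi_{7,0,1}$ reach into the tens of thousands even though the final relations (m) and (n) are small. The cleanest strategy is therefore to defer every simplification until after both the Girard--Waring and the inheritance substitutions have been performed for all six values of $\rho$, so that cancellations across the $\alpha_\rho$-weighted sum are available at once, and to carry out the whole expansion in a computer algebra system; no new theoretical ingredient beyond the machinery already developed in the paper is required.
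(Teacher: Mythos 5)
Your proposal follows essentially the same route as the paper: express each $\varphi_{7,0,\rho}$ in the quasi-binomial parameters of $f_7$ via the normalized Girard--Waring tables plus the derivative-inheritance (truncation) property, then verify the coefficient tuples annihilate these expressions in the $15$-dimensional space of partition monomials, with (mn) obtained as the linear combination $3\cdot(\mathrm{mn})=(\mathrm{m})-(\mathrm{n})$, exactly as the paper's double-letter label indicates. This matches the paper's own procedure (summarized in its Example section and recorded in Table $\boldsymbol{\varphi.7}$ and the $\boldsymbol{\alpha}$ tables), so no further comment is needed.
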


We find these relationships
distinctively impressive.  There are many more identities, not recorded above, that are present when we
open up the limitless world of roots $\,\,\mathcal{R}^{\left(-n\right)}$ of
$f^{\left(-n\right)}:=\underset{n \text{copies}}{{\underbrace{{\iint\cdots%
\int}}}}f$, i.e. when we consider the limitless world of mean values 
$\varphi_{_{D,\delta,\rho}}$ where $\rho<0$.   See Tables $\boldsymbol{\alpha.4}$ through
$\boldsymbol{\alpha.6}$.   \\[-1\baselineskip]
\subsection{Ancillary Computational Results}
Consider the following observations regarding the coefficients of $\overline{\,\,\,\,\,\,r}\!\!\!\!\!\!\!\!\!^{^\text{max}}\,\,\,\,\equiv \overline{\,\,\,r}\!\!\!\!\!\!^{^{D}}\,\,\,\,$ in $\varphi_{_{D,0,\rho}}$.  Let $h_D(n)$ be the polynomial in $n$ predicting the coefficient of $\overline{\,\,\,r}\!\!\!\!\!\!^{^{D}}\,\,\,\,$ for the $\boldsymbol{\varphi\text{.D}}$ family  $\left(\varphi_{_{D,0,\rho}}\right)_{\rho< D-1}$.  This polynomial $h_D(n)$ is always of the form $h_D(n)=\frac{(-1)^{D}D}{D!}\rho\ n^\chi g_D(n)$ where $g_D(n)$ is a monic irreducible (over $\mathbb{Z}$) polynomial of degree $M:=D-(2+\chi)$ with integer coefficients, where $\chi=1$ when $D$ is odd and $\chi=0$ when $D$ is even.  
Let $g_D(n)=\sum\limits_{k=2\ldots D}t_k(D)\, n^{D-(k+\chi)}$.  Note that $t_2(D)\equiv 1$ (monic condition); also for odd $k$, it holds that $t_k(D)=0$ \& $t_{k+1}(D)=0$ if $D\le k$.  The $t_k(D)$ are of the form $\frac{1}{Q_k} u_k(D)$ where $Q_k$ is the least common denominator of the terms in $t_k(D)$, such that $u_k(D)$ is a (generally not monic) polynomial in $D$ with integer coefficients, with leading coefficient denoted as $\mathfrak{N}_k$.
Curiously, denominators $Q_k$ are the OEIS \cite{sloane:oeis} sequence A053657, described in OEIS as  ``Denominators of integer-valued polynomials on prime numbers (with degree n): 1/a(n) is a generator of the ideal formed by the leading coefficients of integer-valued polynomials on prime numbers with degree less than or equal to n'', or equivalently ``Also the least common multiple of the orders of all finite subgroups of $GL_n(\mathbb{Q})$ [Minkowski]".  Strikingly, the leading coefficients $\mathfrak{N}_k$ of the polynomials $u_k(D)$  are coefficients of N{\o}rlund's polynomials, see OEIS A260326 (which had listed only 7 values previous to our work), described there as ``Common denominator of coefficients in N{\o}rlund's polynomial D\_{2n}(x).''  [The OEIS citation refers to N{\o}rlund's 1924 book \cite{norlund} (which discusses the higher order Bernoulli \& Euler polynomials), Table 6 (p. 460), which only lists 7 values, for \emph{even} indices 0 through 12, but Table 5 (p. 459) includes both even \& odd entries as the leading coefficients of the primary components of the numerator polynomials. Our work has produced 198 terms with values eventually exceeding $5.8 \times 10^{40}$.]\\

This paper is a spin-off from, and is material to, our recent
\emph{Insights Via Representational Naturality: New Surprises Intertwining Statistics, Cardano's Cubic Formula,
Triangle Geometry, Polynomial Graphs} \cite{the authors}.  
\\[\baselineskip] \indent
Here is a summary of our tabulated results. \\[0.5\baselineskip]
\noindent
\textbf{Tables $\boldsymbol{\varphi}$.D}: \ Function value means $\varphi_{_{D,\delta,\rho}}$ in
terms of the  polynomial representation parameters $\overline{\,\,\,r}\!\!\!\!\!^{^i}\,\,\,\,$.  These tables
extend the Girard-Waring formula. \\[\baselineskip]
\noindent
\textbf{Tables $\boldsymbol{\alpha}$.D}: \ Linear Relationship Coefficients
$\alpha_{_{D,\delta,\rho}}$ such that ${\sum\limits_\rho\text{
}\alpha_{_{D,\delta,\rho}}\, \varphi_{_{D,\delta,\rho}}\,=0\ }$.  Some
$(D,\delta,\rho)$ triplets admit more than one
$\left(\alpha_{_{D,\delta,\rho}}\right)$ family. \\[1\baselineskip]
\noindent
\textbf{Tables $\overline{\text{GW}}.\boldsymbol{\mathfrak{n}}${=}$\boldsymbol{2}$ through $\overline{\text{GW}}.\boldsymbol{\mathfrak{n}}$=$\boldsymbol{7}$}: 
Normalized Girard-Waring coefficients \ $c_{_{\,\overset{\rightharpoonup}{\kappa}}}:=\frac{j\,\left(-1\right)^j\,}{n} \frac{\left(-1\right)^{\left|\overset{\rightharpoonup}{\kappa}\right|}\,}{\left|\overset{
\rightharpoonup}{\kappa}\right|}\binom{\left|\overset{
\rightharpoonup}{\kappa}\right|}{\overset{\rightharpoonup}{
\kappa}}\,\,\prod\limits_{i\,\in\,\boldsymbol{\mathfrak{n}}}\binom{n}{i}^{k_i}$ \ where $j=$ degree
and where $\,\overset{\rightharpoonup}{\kappa}\equiv\langle
k_1,k_2,\ldots,k_{_n}\rangle$ is an integer partition vector of $j$, i.e.
$\sum_i k_{_i}\cdot i=j$. \\[1\baselineskip]
\noindent
\textbf{Tables $\overline{\text{GW}}$.deg=$\boldsymbol{2}$ through $\overline{\text{GW}}$.deg=$\boldsymbol{7}$}:
\ Normalized Girard-Waring coefficients \ $c_{_{\,\overset{\rightharpoonup}{\kappa}}}:=\frac{j\,\left(-1\right)^j\,%
}{n}\frac{\left(-1\right)^{\left|\overset{
\rightharpoonup}{\kappa}\right|}\,}{\left|\overset{
\rightharpoonup}{\kappa}\right|}\binom{\left|\overset{
\rightharpoonup}{\kappa}\right|}{\overset{\rightharpoonup}{
\kappa}}\,\,\prod\limits_{i\,\in\,\boldsymbol{\mathfrak{n}}}\binom{n}{i}^{k_i}$ \ 
as above.
\newpage
\begin{landscape}
\begin{figure}[t]
\includegraphics[width=1.7\textwidth, clip=true,trim= 0cm 4cm 0cm 4cm]
{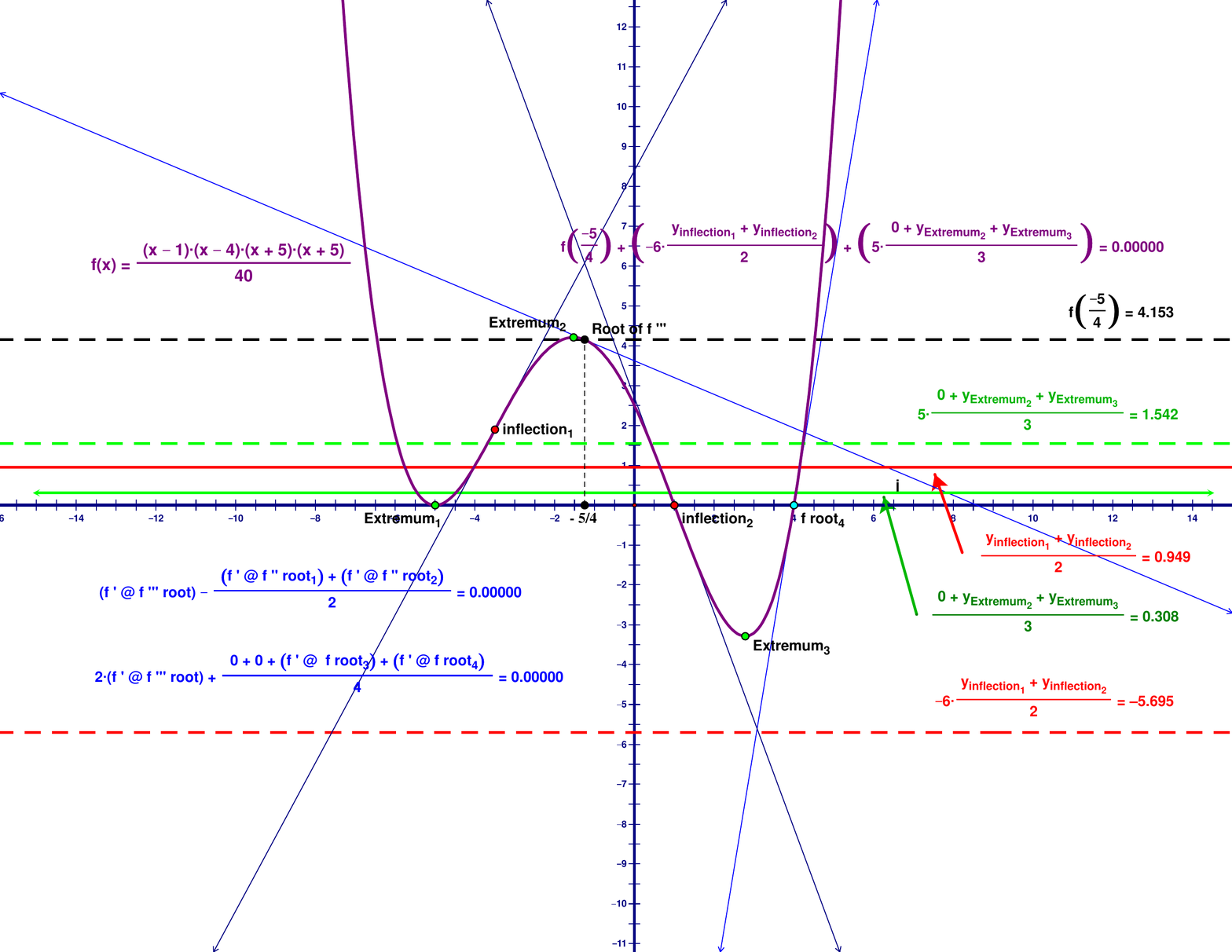}
\caption{%
Mean Relationships in all quartics.  This example manifests:  \\ 
\large
\hspace*{0.6in} (a) { $5
\overline{{f\left(\mathcal{R}'\right)}}-6\,\overline{{f\left(\mathcal{R}''%
\right)}}+1\overline{{f\left(\mathcal{R}'''\right)}}=0$} \qquad [note:  $\mathcal{R}'''=\{-5/4\}$] \\ 
\hspace*{0.6in} (b) { $1 \overline{{f'\left(\mathcal{R}'''\right)}}-1\overline{{f'\left(\mathcal{R}''\right)}}=0$} \\ \hspace*{0.6in} (c) { $2
\overline{{f'\left(\mathcal{R}'''\right)}}+1\overline{{f'\left(\mathcal{R}\right)}}=0$}
 }
\label{fig:Quartic}
\end{figure}
\end{landscape}

\newpage
\textbf{Supportive Results: \ Function Value Means \ $\varphi_{_{D\,\delta\,\rho}}$
in Terms of Polynomial Representation Parameters \
$\,\overline{\,\,\,r}\!\!\!\!\!^{^i}\,\,\,\,$ .}
\\[1\baselineskip]
\textbf{Table} $\boldsymbol{\varphi.2}$ \\[-0.5\baselineskip]
\begin{tabular}[t]{|l|l|l|l|}
\hline
$\genfrac{}{}{0pt}{}{\text{\# roots}}{n}$&$\rho$&$\varphi_{_{D\,\delta\,\rho}}=%
\varphi_{_{2\,0\,\rho}}$&$\sum_{\text{coef}>0}$\\
\hline
$1$&$1$&$-\left(\overline{r}^{\,\,2}-\overline{\overline{r}}\right)$&$1$\\
\hline
$2$&$0$&$0$&$0$\\
\hline
$3$&$-1$&$\,\,\,\,\,\,\,\,\overline{r}^{\,\,2}-\overline{\overline{r}}$&$1$\\
\hline
$4$&$-2$&$2\left(\,\overline{r}^{\,\,2}-\,\overline{\overline{r}}\right)$&$2$%
\\
\hline
$5$&$-3$&$3\,\left(\,\overline{r}^{\,\,2}-\,\overline{\overline{r}}%
\right)$&$3$\\
\hline
$6$&$-4$&$4\left(\,\overline{r}^{\,\,2}-\,\overline{\overline{r}}\right)$&$4$%
\\
\hline
$7$&$-5$&$5\,\left(\,\overline{r}^{\,\,2}-\,\overline{\overline{r}}%
\right)$&$5$\\
\hline
$8$&$-6$&$6\left(\,\overline{r}^{\,\,2}-\,\overline{\overline{r}}\right)$&$6$%
\\
\hline
$9$&$-7$&$7\,\left(\,\overline{r}^{\,\,2}-\,\overline{\overline{r}}%
\right)$&$7$\\
\hline
$10$&$-8$&$8\,\left(\,\overline{r}^{\,\,2}-\,\overline{\overline{r}}%
\right)$&$8$\\
\hline
\end{tabular}%
\hspace{0.5in}
\\[2\baselineskip]
\textbf{Table}  $\boldsymbol{\varphi.3}$\\[-0.5\baselineskip]
\begin{tabular}[t]{|l|l|l|l|}
\hline
$\genfrac{}{}{0pt}{}{\text{\# roots}}{n}$&$\rho$&$\varphi_{_{D\,\delta\,\rho}}=%
\varphi_{_{3\,0\,\rho}}$&$\sum_{\text{coef}>0}$\\
\hline
$1$&$2$&$-\left(2\,{\overline{r}}^{\,\,3}-3\,\overline{r}^{\,\,}\,\overline{%
\overline{r}}+\overline{\overline{\overline{r}}}\right)$&$3$\\
\hline
$2$&$1$&$-\left(2\,{\overline{r}}^{\,\,3}-3\,\overline{r}^{\,\,}\,\overline{%
\overline{r}}+\overline{\overline{\overline{r}}}\right)$&$3$\\
\hline
$3$&$0$&$0$&$0$\\
\hline
$4$&$-1$&$2\left(2\,{\overline{r}}^{\,\,3}-3\,\overline{r}^{\,\,}\,\overline{%
\overline{r}}+\overline{\overline{\overline{r}}}\right)$&$6$\\
\hline
$5$&$-2$&$5\left(2\,{\overline{r}}^{\,\,3}-3\,\overline{r}^{\,\,}\,\overline{%
\overline{r}}+\overline{\overline{\overline{r}}}\right)$&$15$\\
\hline
$6$&$-3$&$9\left(2\,{\overline{r}}^{\,\,3}-3\,\overline{r}^{\,\,}\,\overline{%
\overline{r}}+\overline{\overline{\overline{r}}}\right)$&$27$\\
\hline
$7$&$-4$&$14\left(2\,{\overline{r}}^{\,\,3}-3\,\overline{r}^{\,\,}\,%
\overline{\overline{r}}+\overline{\overline{\overline{r}}}\right)$&$42$\\
\hline
$8$&$-5$&$20\left(2\,{\overline{r}}^{\,\,3}-3\,\overline{r}^{\,\,}\,%
\overline{\overline{r}}+\overline{\overline{\overline{r}}}\right)$&$60$\\
\hline
$9$&$-6$&$27\left(2\,{\overline{r}}^{\,\,3}-3\,\overline{r}^{\,\,}\,%
\overline{\overline{r}}+\overline{\overline{\overline{r}}}\right)$&$81$\\
\hline
$10$&$-7$&$35\left(2\,{\overline{r}}^{\,\,3}-3\,\overline{r}^{\,\,}\,%
\overline{\overline{r}}+\overline{\overline{\overline{r}}}\right)$&$105$\\
\hline
\end{tabular}%
\\[1\baselineskip]
\begin{landscape}
\newpage
{\small
\textbf{Table} $\boldsymbol{\varphi.4}$
\hspace{4.5in}
\textbf{Tables} \ $\boldsymbol{\alpha.4}$
}\\
\begin{tabular}{|l | l|l|l|}
\hline
$\genfrac{}{}{0pt}{}{\text{\# roots}}{n}$&$\rho$&$\varphi_{_{D\,\delta\,\rho}}=%
\varphi_{_{4\,0\,\rho}}$&$\sum_{\text{coef}>0}$\\
\hline
$1$&$3$&$-3\,\,{\overline{r}}^{\,\,4}+6\,\,{\overline{r}}^{\,\,2}\,{%
\overline{\overline{r}}}-4\,\,{\overline{r}}\,\,{\overline{\overline{%
\overline{r}}}}-0\,\,{\overline{\overline{r}}}^{\,\,2}+1\,\,{\overline{%
\overline{\overline{\overline{r}}}}}$&$7$\\
\hline
$2$&$2$&$-8\,\,{\overline{r}}^{\,\,4}+16\,\,{\overline{r}}^{\,\,2}\,{%
\overline{\overline{r}}}-4\,\,{\overline{r}}\,\,{\overline{\overline{%
\overline{r}}}}-5\,\,{\overline{\overline{r}}}^{\,\,2}+1\,\,{\overline{%
\overline{\overline{\overline{r}}}}}$&$17$\\
\hline
$3$&$1$&$-9\,\,{\overline{r}}^{\,\,4}+18\,\,{\overline{r}}^{\,\,2}\,{%
\overline{\overline{r}}}-4\,\,{\overline{r}}\,\,{\overline{\overline{%
\overline{r}}}}-6\,\,{\overline{\overline{r}}}^{\,\,2}+1\,\,{\overline{%
\overline{\overline{\overline{r}}}}}$&$19$\\
\hline
$4$&$0$&$0$&$0$\\
\hline
$5$&$-1$&$25\,\,{\overline{r}}^{\,\,4}-50\,{\overline{r}}^{\,\,2}\,{%
\overline{\overline{r}}}+12\,\,{\overline{r}}\,\,{\overline{\overline{%
\overline{r}}}}+16\,\,{\overline{\overline{r}}}^{\,\,2}-3\,\,{\overline{%
\overline{\overline{\overline{r}}}}}$&$53$\\
\hline
$6$&$-2$&$72\,\,{\overline{r}}^{\,\,4}-144\,\,{\overline{r}}^{\,\,2}\,{%
\overline{\overline{r}}}+36\,\,{\overline{r}}\,\,{\overline{\overline{%
\overline{r}}}}+45\,\,{\overline{\overline{r}}}^{\,\,2}-9\,\,{\overline{%
\overline{\overline{\overline{r}}}}}$&$153$\\
\hline
$7$&$-3$&$147\,\,{\overline{r}}^{\,\,4}-294\,\,{\overline{r}}^{\,\,2}\,{%
\overline{\overline{r}}}+76\,\,{\overline{r}}\,\,{\overline{\overline{%
\overline{r}}}}+90\,\,{\overline{\overline{r}}}^{\,\,2}-19\,\,{\overline{%
\overline{\overline{\overline{r}}}}}$&$313$\\
\hline
$8$&$-4$&$256\,\,{\overline{r}}^{\,\,4}-512\,\,{\overline{r}}^{\,\,2}\,{%
\overline{\overline{r}}}+136\,\,{\overline{r}}\,\,{\overline{\overline{%
\overline{r}}}}+154\,\,{\overline{\overline{r}}}^{\,\,2}-34\,\,{\overline{%
\overline{\overline{\overline{r}}}}}$&$546$\\
\hline
$9$&$-5$&$405\,\,{\overline{r}}^{\,\,4}-810\,\,{\overline{r}}^{\,\,2}\,{%
\overline{\overline{r}}}+220\,\,{\overline{r}}\,\,{\overline{\overline{%
\overline{r}}}}+240\,\,{\overline{\overline{r}}}^{\,\,2}-55\,\,{\overline{%
\overline{\overline{\overline{r}}}}}$&$865$\\
\hline
$10$&$-6$&$600\,\,{\overline{r}}^{\,\,4}-1200\,\,{\overline{r}}^{\,\,2}\,{%
\overline{\overline{r}}}+332\,\,{\overline{r}}\,\,{\overline{\overline{%
\overline{r}}}}+351\,\,{\overline{\overline{r}}}^{\,\,2}-83\,\,{\overline{%
\overline{\overline{\overline{r}}}}}$&$1283$\\
\hline
\end{tabular}%
{\setlength{\tabcolsep}{0.063in}%
\begin{footnotesize}\quad
\begin{tabular}{|c|c|c|c|c|c|c|c@{}|}
\hline
$\rho\phantom{\bigg|}$&\multicolumn{7}{c@{}|}{$\alpha_{_{4,0,\rho}}$}\\ 
\hline
$3\phantom{\Big|}$&$615$&&&&&&\\
\hline
$2\phantom{\Big|}$&&$615$&&&&&\\
\hline
$1\phantom{\Big|}$&&&$205$&&&&\\
\hline
$0$&&&&&&&\\
\hline
--$1\phantom{\Big|}$&&&&$123$&&&\\
\hline
--$2\phantom{\Big|}$&&&&&$205$&&\\
\hline
--$3\phantom{\Big|}$&&&&&&$205$&\\
\hline
--$4\phantom{\Big|}$&&&&&&&$615$\\
\hline
--$5\phantom{\Big|}$&--$351$&$64$&$49$&--$55$&--$192$&--$267$&--$848\,$\\
\hline
--$6\phantom{\Big|}$&$240$&--$35$&--$30$&$32$&$105$&$130$&$310$\\
\hline
\end{tabular}
\end{footnotesize}
} 
\\[2.5\baselineskip]
\begin{tabular}[t]{|c|c|c|c|c|c|c|c|c|c|c|c|c|c|c|c|c|c|c|c|c|c|}
\hline
$\rho$&\multicolumn{21}{c|}{$\alpha_{_{4,0,\rho}}$}\\
\hline
$3$&$\boldsymbol{1}$&$1$&&$9$&&&$1$&&&&$67$&&&&&$155$&&&&&\\
\hline
$2$&--$\boldsymbol{6}$&&$2$&&$9$&&&$9$&&&&$134$&&&&&$310$&&&&\\
\hline
$1$&$\boldsymbol{5}$&$8$&$1$&&&$9$&&&$15$&&&&$67$&&&&&$31$&&&\\
\hline
$0$&&&&&&&&&&&&&&&&&&&&&\\
\hline
--$1$&&$3$&$1$&--$45$&&$9$&&&&$45$&&&&$67$&&&&&$155$&&\\
\hline
--$2$&&&&$16$&$1$&--$2$&--$2$&$1$&$8$&--$34$&&&&&$134$&&&&&$310$&\\
\hline
--$3$&&&&&&&$1$&&&$9$&--$77$&$16$&$25$&$41$&--$144$&&&&&&$155$\\
\hline
--$4$&&&&&&&&&&&$45$&--$5$&--$12$&$17$&$45$&--$120$&$35$&$9$&--$80$&--$315$&--%
$195$\\
\hline
--$5$&&&&&&&&&&&&&&&&$77$&--$16$&--$5$&$41$&$144$&$67$\\
\hline
--$6$&&&&&&&&&&&&&&&&&&&&&\\
\hline
\end{tabular}%
%
\newpage
\textbf{Tables} $\boldsymbol{\varphi.5}$\\
\begin{tabular}{|l|l|l|l|}
\hline
$\genfrac{}{}{0pt}{}{\text{\# roots}}{n}$&$\rho$&$\varphi_{_{D\,\delta\,\rho}}=%
\varphi_{_{5\,0\,\rho}}$&$\sum_{\text{coef}>0}$\\
\hline
$1$&$4$& \
$-4\,\,{\overline{r}}^{\,\,5}+10\,\,{\overline{r}}^{\,\,3}\,{\overline{%
\overline{r}}}-10\,\,{\overline{r}}\,^2\,{\overline{\overline{%
\overline{r}}}}-0\,\,{\overline{r}}\,\,{\overline{\overline{r}}}^{\,\,2}+5\,\,%
{\overline{r}}\,\,{\overline{\overline{\overline{\overline{r}}}}}+0\,{%
\overline{\overline{r}}\,}\,{\overline{\overline{\overline{r}}}}-1\,%
\overline{{\overline{\overline{\overline{\overline{r}}}}}}$&$15$\\
\hline
$2$&$3$&$-24\,\,{\overline{r}}^{\,\,5}+60\,\,{\overline{r}}^{\,\,3}\,{%
\overline{\overline{r}}}-20\,\,{\overline{r}}\,^2\,{\overline{\overline{%
\overline{r}}}}-30\,\,{\overline{r}}\,\,{\overline{\overline{r}}}^{\,\,2}+5\,%
\,{\overline{r}}\,\,{\overline{\overline{\overline{\overline{r}}}}}+10\,{%
\overline{\overline{r}}\,}\,{\overline{\overline{\overline{r}}}}-1\,%
\overline{{\overline{\overline{\overline{\overline{r}}}}}}$&$75$\\
\hline
$3$&$2$&$-54\,\,{\overline{r}}^{\,\,5}+135\,\,{\overline{r}}^{\,\,3}\,{%
\overline{\overline{r}}}-35\,\,{\overline{r}}\,^2\,{\overline{\overline{%
\overline{r}}}}-75\,\,\,{\overline{r}}\,\,{\overline{\overline{r}}}^{\,\,2}+5%
\,\,{\overline{r}}\,\,{\overline{\overline{\overline{\overline{r}}}}}+25\,\,{%
\overline{\overline{r}}\,}\,{\overline{\overline{\overline{r}}}}-1\,%
\overline{{\overline{\overline{\overline{\overline{r}}}}}}$&$165$\\
\hline
$4$&$1$&$-64\,\,{\overline{r}}^{\,\,5}+160\,\,{\overline{r}}^{\,\,3}\,{%
\overline{\overline{r}}}-40\,\,{\overline{r}}\,^2\,{\overline{\overline{%
\overline{r}}}}-90\,\,\,{\overline{r}}\,\,{\overline{\overline{r}}}^{\,\,2}+5%
\,\,{\overline{r}}\,\,{\overline{\overline{\overline{\overline{r}}}}}+30\,\,{%
\overline{\overline{r}}\,}\,{\overline{\overline{\overline{r}}}}-1\,%
\overline{{\overline{\overline{\overline{\overline{r}}}}}}$&$195$\\
\hline
$5$&$0$&$0$&$0$\\
\hline
$6$&$-1$&$216\,\,{\overline{r}}^{\,\,5}-540\,\,{\overline{r}}^{\,\,3}\,{%
\overline{\overline{r}}}+140\,\,{\overline{r}}\,^2\,{\overline{\overline{%
\overline{r}}}}+300\,\,\,{\overline{r}}\,\,{\overline{\overline{r}}}^{\,\,%
2}-20\,\,{\overline{r}}\,\,{\overline{\overline{\overline{%
\overline{r}}}}}-100\,{\overline{\overline{r}}\,}\,{\overline{\overline{%
\overline{r}}}}+4\,\overline{{\overline{\overline{\overline{%
\overline{r}}}}}}$&$660$\\
\hline
$7$&$-2$&$686\,\,{\overline{r}}^{\,\,5}-1715\,\,{\overline{r}}^{\,\,3}\,{%
\overline{\overline{r}}}+455\,\,{\overline{r}}\,^2\,{\overline{\overline{%
\overline{r}}}}+945\,\,\,{\overline{r}}\,\,{\overline{\overline{r}}}^{\,\,%
2}-70\,\,{\overline{r}}\,\,{\overline{\overline{\overline{%
\overline{r}}}}}-315\,{\overline{\overline{r}}\,}\,{\overline{\overline{%
\overline{r}}}}+14\,\overline{{\overline{\overline{\overline{%
\overline{r}}}}}}$&$2100$\\
\hline
$8$&$-3$&$1536\,\,{\overline{r}}^{\,\,5}-3840\,\,{\overline{r}}^{\,\,3}\,{%
\overline{\overline{r}}}+1040\,\,{\overline{r}}\,^2\,{\overline{\overline{%
\overline{r}}}}+2100\,\,\,{\overline{r}}\,\,{\overline{\overline{r}}}^{\,\,%
2}-170\,\,{\overline{r}}\,\,{\overline{\overline{\overline{%
\overline{r}}}}}-700\,{\overline{\overline{r}}\,}\,{\overline{\overline{%
\overline{r}}}}+34\,\overline{{\overline{\overline{\overline{%
\overline{r}}}}}}$&$4710$\\
\hline
$9$&$-4$&$2916\,{\overline{r}}^{\,\,5}-7290\,\,{\overline{r}}^{\,\,3}\,{%
\overline{\overline{r}}}+2010\,\,{\overline{r}}\,^2\,{\overline{\overline{%
\overline{r}}}}+3960\,\,\,{\overline{r}}\,\,{\overline{\overline{r}}}^{\,\,%
2}-345\,\,{\overline{r}}\,\,{\overline{\overline{\overline{%
\overline{r}}}}}-1320\,{\overline{\overline{r}}\,}\,{\overline{\overline{%
\overline{r}}}}+69\,\overline{{\overline{\overline{\overline{%
\overline{r}}}}}}$&$8955$\\
\hline
$10$&$-5$&$5000\,{\overline{r}}^{\,\,5}-12500\,\,{\overline{r}}^{\,\,3}\,{%
\overline{\overline{r}}}+3500\,\,{\overline{r}}\,^2\,{\overline{\overline{%
\overline{r}}}}+6750\,\,\,{\overline{r}}\,\,{\overline{\overline{r}}}^{\,\,%
2}-625\,\,{\overline{r}}\,\,{\overline{\overline{\overline{%
\overline{r}}}}}-2250\,{\overline{\overline{r}}\,}\,{\overline{\overline{%
\overline{r}}}}+125\,\overline{{\overline{\overline{\overline{%
\overline{r}}}}}}$&$15375$\\
\hline
\end{tabular}%
\newpage
\textbf{Tables} \ $\boldsymbol{\alpha.5}$ \\
\begin{tabular}[t]{|c|c|c|c|c|c|c|c|c|c|c|c|c|c|c|c|c|c|c|c|c|c|}
\hline
$\rho$&\multicolumn{21}{c|}{$\alpha_{_{5,0,\rho}}$}\\
\hline
$4$&--$\boldsymbol{1}$&$\boldsymbol{1}$&&&&$2$&&&&$28$&&&&&$26$&&&&&&$171$\\
\hline
$3$&$\boldsymbol{2}$&&$\boldsymbol{1}$&&$1$&&&&$28$&&&&&$182$&&&&&&$38$&\\
\hline
$2$&--$\boldsymbol{2}$&--$\boldsymbol{6}$&--$%
\boldsymbol{4}$&$4$&&&&$4$&&&&&$182$&&&&&&$228$&&\\
\hline
$1$&$\boldsymbol{1}$&$\boldsymbol{5}$&$%
\boldsymbol{3}$&&$3$&$10$&$28$&&&&&$182$&&&&&&$342$&&&\\
\hline
$0$&&&&&&&&&&&&&&&&&&&&&\\
\hline
--$1$&&&&$1$&$1$&$3$&$21$&$1$&--$35$&--$63$&$91$&&&&&&$57$&&&&\\
\hline
--$2$&&&&&&&--$4$&&$12$&$20$&$-60$&$64$&$30$&--$72$&--$20$&$684$&&&&&\\
\hline
--$3$&&&&&&&&&&&$14$&--$21$&--$7$&$35$&$9$&--$651$&--$27$&$75$&$27$&--$7$&--$6%
6$\\
\hline
--$4$&&&&&&&&&&&&&&&&$182$&$10$&--$32$&--$10$&$4$&$35$\\
\hline
--$5$&&&&&&&&&&&&&&&&&&&&&\\
\hline
\end{tabular}
\\[1\baselineskip]
\setlength{\tabcolsep}{0.063in}%
\begin{tabular}{|c|c|c|c|c|c|c|c|}
\hline
$\rho\phantom{\bigg|}$&\multicolumn{7}{c|}{$\alpha_{_{5,0,\rho}}$}\\
\hline
$4$&&&&&&&$325$\\
\hline
$3$&&&&&&$975$&\\
\hline
$2$&&&&&$1950$&&\\
\hline
$1$&&&&$13$&&&\\
\hline
$0$&&&&&&&\\
\hline
--$1$&&&$975$&&&&\\
\hline
--$2$&&$650$&&&&&\\
\hline
--$3$&$975$&&&&&&\\
\hline
--$4$&--$1100$&--$525$&--$350$&$2$&$175$&--$300$&--$75$\\
\hline
--$5$&$342$&$217$&$162$&--$1$&--$81$&$63$&$44$\\
\hline
\end{tabular}
%
\newpage
\textbf{Table} \ $\boldsymbol{\varphi.6}$\\
\begin{footnotesize}
\begin{tabular}[t]{|l|l|l|l|}
\hline
$\genfrac{}{}{0pt}{}{\text{\# roots}}{n}$&$\rho$&$\varphi_{_{D\,\delta\,\rho}}=%
\varphi_{_{6\,0\,\rho}}$&$\sum_{\text{coef}>0}$\\
\hline
$1$&$5$&$-5\,\,{\overline{r}}^{\,\,6}+15\,\,{\overline{r}}^{\,\,4}\,{%
\overline{\overline{r}}}-20\,\,{\overline{r}}\,^3\,{\overline{\overline{%
\overline{r}}}}+0\,\,{\overline{r}}^2\,\,{\overline{\overline{r}}}^{\,\,2}+15%
\,\,{\overline{r}}\,^2\,{\overline{\overline{\overline{\overline{r}}}}}+0\,%
\overline{r}\,{\overline{\overline{r}}\,}\,{\overline{\overline{%
\overline{r}}}}-6\,\overline{r}\,\,\overline{{\overline{\overline{\overline{%
\overline{r}}}}}}+0\,\,{\overline{\overline{r}}}\,^3+0\,\,\overline{{%
\overline{r}}}\,{\overline{\overline{\overline{\overline{r}}}}}+0\,\,{%
\overline{\overline{\overline{r}}}}^{\,\,2}+1\,\overline{\overline{{%
\overline{\overline{\overline{\overline{r}}}}}}}$&$31$\\
\hline
$2$&$4$&$-64\,\,{\overline{r}}^{\,\,6}+192\,\,{\overline{r}}^{\,\,4}\,{%
\overline{\overline{r}}}-80\,\,{\overline{r}}\,^3\,{\overline{\overline{%
\overline{r}}}}-132\,\,{\overline{r}}^2\,\,{\overline{\overline{r}}}^{\,\,%
2}+30\,\,{\overline{r}}\,^2\,{\overline{\overline{\overline{%
\overline{r}}}}}+60\,\overline{r}\,{\overline{\overline{r}}\,}\,{\overline{%
\overline{\overline{r}}}}-6\,\overline{r}\,\,\overline{{\overline{\overline{%
\overline{\overline{r}}}}}}+14\,\,{\overline{\overline{r}}}\,^3-15\,\,%
\overline{{\overline{r}}}\,{\overline{\overline{\overline{\overline{r}}}}}+0\,%
\,{\overline{\overline{\overline{r}}}}^{\,\,2}+1\,\overline{\overline{{%
\overline{\overline{\overline{\overline{r}}}}}}}$&$297$\\
\hline
$3$&$3$&$-243\,\,{\overline{r}}^{\,\,6}+729\,\,{\overline{r}}^{\,\,4}\,{%
\overline{\overline{r}}}-216\,\,{\overline{r}}\,^3\,{\overline{\overline{%
\overline{r}}}}-567\,\,{\overline{r}}^2\,\,{\overline{\overline{r}}}^{\,\,%
2}+45\,\,{\overline{r}}\,^2\,{\overline{\overline{\overline{%
\overline{r}}}}}+234\,\overline{r}\,{\overline{\overline{r}}\,}\,{\overline{%
\overline{\overline{r}}}}-6\,\overline{r}\,\,\overline{{\overline{\overline{%
\overline{\overline{r}}}}}}+72\,\,{\overline{\overline{r}}}\,^3-30\,\,%
\overline{{\overline{r}}}\,{\overline{\overline{\overline{\overline{r}}}}}-19%
\,\,{\overline{\overline{\overline{r}}}}^{\,\,2}+1\,\overline{\overline{{%
\overline{\overline{\overline{\overline{r}}}}}}}$&$1081$\\
\hline
$4$&$2$&$-512\,\,{\overline{r}}^{\,\,6}+1536\,\,{\overline{r}}^{\,\,4}\,{%
\overline{\overline{r}}}-416\,\,{\overline{r}}\,^3\,{\overline{\overline{%
\overline{r}}}}-1224\,\,{\overline{r}}^2\,\,{\overline{\overline{r}}}^{\,\,%
2}+66\,\,{\overline{r}}\,^2\,{\overline{\overline{\overline{%
\overline{r}}}}}+492\,\overline{r}\,{\overline{\overline{r}}\,}\,{\overline{%
\overline{\overline{r}}}}-6\,\overline{r}\,\,\overline{{\overline{\overline{%
\overline{\overline{r}}}}}}+162\,\,{\overline{\overline{r}}}\,^3-51\,\,%
\overline{{\overline{r}}}\,{\overline{\overline{\overline{\overline{r}}}}}-48%
\,\,{\overline{\overline{\overline{r}}}}^{\,\,2}+1\,\overline{\overline{{%
\overline{\overline{\overline{\overline{r}}}}}}}$&$2257$\\
\hline
$5$&$1$&$-625\,\,{\overline{r}}^{\,\,6}+1875\,\,{\overline{r}}^{\,\,4}\,{%
\overline{\overline{r}}}-500\,\,{\overline{r}}\,^3\,{\overline{\overline{%
\overline{r}}}}-1500\,\,{\overline{r}}^2\,\,{\overline{\overline{r}}}^{\,\,%
2}+75\,\,{\overline{r}}\,^2\,{\overline{\overline{\overline{%
\overline{r}}}}}+600\,\overline{r}\,{\overline{\overline{r}}\,}\,{\overline{%
\overline{\overline{r}}}}-6\,\overline{r}\,\,\overline{{\overline{\overline{%
\overline{\overline{r}}}}}}+200\,\,{\overline{\overline{r}}}\,^3-60\,\,%
\overline{{\overline{r}}}\,{\overline{\overline{\overline{\overline{r}}}}}-60%
\,\,{\overline{\overline{\overline{r}}}}^{\,\,2}+1\,\overline{\overline{{%
\overline{\overline{\overline{\overline{r}}}}}}}$&$2751$\\
\hline
$6$&$0$&$0$&$0$\\
\hline
$7$&$-1$&$2401\,\,{\overline{r}}^{\,\,6}-7203\,\,{\overline{r}}^{\,\,4}\,{%
\overline{\overline{r}}}+1960\,\,{\overline{r}}\,^3\,{\overline{\overline{%
\overline{r}}}}+5733\,\,{\overline{r}}^2\,\,{\overline{\overline{r}}}^{\,\,%
2}-315\,\,{\overline{r}}\,^2\,{\overline{\overline{\overline{%
\overline{r}}}}}-2310\,\overline{r}\,{\overline{\overline{r}}\,}\,{\overline{%
\overline{\overline{r}}}}+30\,\overline{r}\,\,\overline{{\overline{\overline{%
\overline{\overline{r}}}}}}-756\,\,{\overline{\overline{r}}}\,^3+240\,\,%
\overline{{\overline{r}}}\,{\overline{\overline{\overline{%
\overline{r}}}}}+225\,\,{\overline{\overline{\overline{r}}}}^{\,\,2}-5\,%
\overline{\overline{{\overline{\overline{\overline{%
\overline{r}}}}}}}$&$10589$\\
\hline
$8$&$-2$&$8192\,\,{\overline{r}}^{\,\,6}-24576\,\,{\overline{r}}^{\,\,4}\,{%
\overline{\overline{r}}}+6784\,\,{\overline{r}}\,^3\,{\overline{\overline{%
\overline{r}}}}+19488\,\,{\overline{r}}^2\,\,{\overline{\overline{r}}}^{\,\,%
2}-1140\,\,{\overline{r}}\,^2\,{\overline{\overline{\overline{%
\overline{r}}}}}-7896\,\overline{r}\,{\overline{\overline{r}}\,}\,{\overline{%
\overline{\overline{r}}}}+120\,\overline{r}\,\,\overline{{\overline{%
\overline{\overline{\overline{r}}}}}}-2548\,\,{\overline{\overline{r}}}\,%
^3+840\,\,\overline{{\overline{r}}}\,{\overline{\overline{\overline{%
\overline{r}}}}}+756\,\,{\overline{\overline{\overline{r}}}}^{\,\,2}-20\,%
\overline{\overline{{\overline{\overline{\overline{%
\overline{r}}}}}}}$&$36180$\\
\hline
$9$&$-3$&$19683\,\,{\overline{r}}^{\,\,6}-59049\,\,{\overline{r}}^{\,\,4}\,{%
\overline{\overline{r}}}+16524\,\,{\overline{r}}\,^3\,{\overline{\overline{%
\overline{r}}}}+46656\,\,{\overline{r}}^2\,\,{\overline{\overline{r}}}^{\,\,%
2}-2889\,\,{\overline{r}}\,^2\,{\overline{\overline{\overline{%
\overline{r}}}}}-19008\,\overline{r}\,{\overline{\overline{r}}\,}\,{%
\overline{\overline{\overline{r}}}}+330\,\overline{r}\,\,\overline{{%
\overline{\overline{\overline{\overline{r}}}}}}-6048\,\,{\overline{%
\overline{r}}}\,^3+2064\,\,\overline{{\overline{r}}}\,{\overline{\overline{%
\overline{\overline{r}}}}}+1792\,\,{\overline{\overline{\overline{r}}}}^{\,\,%
2}-55\,\overline{\overline{{\overline{\overline{\overline{%
\overline{r}}}}}}}$&$87049$\\
\hline
$10$&$-4$&$40000\,\,{\overline{r}}^{\,\,6}-120000\,\,{\overline{r}}^{\,\,4}\,%
{\overline{\overline{r}}}+34000\,\,{\overline{r}}\,^3\,{\overline{\overline{%
\overline{r}}}}+94500\,\,{\overline{r}}^2\,\,{\overline{\overline{r}}}^{\,\,%
2}-6150\,\,{\overline{r}}\,^2\,{\overline{\overline{\overline{%
\overline{r}}}}}-38700\,\overline{r}\,{\overline{\overline{r}}\,}\,{%
\overline{\overline{\overline{r}}}}+750\,\overline{r}\,\,\overline{{%
\overline{\overline{\overline{\overline{r}}}}}}-12150\,\,{\overline{%
\overline{r}}}\,^3+4275\,\,\overline{{\overline{r}}}\,{\overline{\overline{%
\overline{\overline{r}}}}}+3600\,\,{\overline{\overline{\overline{r}}}}^{\,\,%
2}-125\,\overline{\overline{{\overline{\overline{\overline{%
\overline{r}}}}}}}$&$177125$\\
\hline
\end{tabular}%
\end{footnotesize}
%
%
\\[1\baselineskip]
\textbf{Table} \ $\boldsymbol{\alpha.6}$\\
\begin{tabular}{|c|c|c|c|c|c|c|c|c|c|c|c|c|c|c|c|}
\hline
$\rho$&\multicolumn{15}{c|}{$\alpha_{_{6,0,\rho}}$}\\
\hline
$5$&$\boldsymbol{3}$&$1$&&$1$&&&$3$&&&&$125$&&&&\\
\hline
$4$&--$\boldsymbol{20}$&&$2$&&$1$&&&$4$&&&&$125$&&&\\
\hline
$3$&$\boldsymbol{60}$&--$30$&--$15$&&&$3$&&&$2$&&&&$125$&&\\
\hline
$2$&--$\boldsymbol{120}$&$160$&$60$&--$140$&--$45$&--$30$&&&&$12$&&&&$125$&\\
\hline
$1$&$%
\boldsymbol{77}$&--$81$&--$32$&--$161$&--$36$&--$8$&$1372$&$651$&$83$&$53$&&&&%
&$125$\\
\hline
$0$&&&&&&--$15$&&&&&&&&&\\
\hline
--$1$&&--$10$&$3$&--$140$&--$36$&$2$&$1680$&$756$&$90$&$60$&$67200$&--$25200$&%
--$6825$&--$700$&$300$\\
\hline
--$2$&&&&$20$&$5$&&--$640$&--$280$&--$32$&--$20$&$64800$&$23800$&$6300$&$675$&%
--$200$\\
\hline
--$3$&&&&&&&$105$&$45$&$5$&$3$&--$29925$&--$10800$&--$2800$&--$300$&$75$\\
\hline
--$4$&&&&&&&&&&&$5488$&$1953$&$498$&$53$&--$12$\\
\hline
\end{tabular}%
\\[1\baselineskip]
\textbf{Table} \ $\boldsymbol{\varphi.7}$\\[-1.060\baselineskip]
\begin{Large}
\begin{tabular}[t]{|l|l|l|l|}
\hline
{\normalsize$n$}&{\normalsize$\rho$}&{\normalsize$\varphi_{_{D\,\delta\,\rho}}=\varphi_{_{7\,0\,\rho}}$}&{\normalsize$\sum_{%
\text{coef}>0}$}\\
\hline
$1$&$6$&$\genfrac{}{}{0pt}{}{-6\,\,{\overline{r}}^{\,\,7}\,+\,21\,\,{\overline{r}}^{\,\,5}\,{\overline{%
\overline{r}}}\,-\,35\,\,{\overline{r}}\,^4\,{\overline{\overline{%
\overline{r}}}}\,+\,0\,\,{\overline{r}}^{\,3}\,\,{\overline{\overline{r}}}^{\,\,%
2}\,+\,35\,\,{\overline{r}}\,^3\,{\overline{\overline{\overline{%
\overline{r}}}}}\,+\,0\,\overline{r}^{\,2}\,{\overline{\overline{r}}\,}\,{%
\overline{\overline{\overline{r}}}}\,-\,21\,\overline{r}^{\,2}\,\,\overline{{%
\overline{\overline{\overline{\overline{r}}}}}}\,+\,}{0\,\overline{r}\,{\overline{%
\overline{r}}}\,^3\,+\,0\,\overline{r}\,\overline{{\overline{r}}}\,{\overline{%
\overline{\overline{\overline{r}}}}}\,+\,0\,\overline{r}\,\,{\overline{\overline{%
\overline{r}}}}^{\,\,2}\,+\,7\,\overline{r}\,\overline{\overline{{\overline{%
\overline{\overline{\overline{r}}}}}}}\,+\,0\,\,\overline{{\overline{r}}}^{\,2}\,%
{\overline{\overline{\overline{{r}}}}}\,-\,0\,\overline{\overline{r}}\,\,%
\overline{{\overline{\overline{\overline{\overline{r}}}}}}\,+\,0\,\,\overline{{%
\overline{\overline{r}}}}\,{\overline{\overline{\overline{\overline{r}}}}}\,-\,1\,%
\overline{\overline{{\overline{\overline{\overline{\overline{%
\overline{r}}}}}}}}}$&$63$\\
\hline
$2$&$5$&$\genfrac{}{}{0pt}{}{-160\,\,{\overline{r}}^{\,\,7}\,+\,560\,\,{\overline{r}}^{\,\,5}\,{%
\overline{\overline{r}}}\,-\,280\,\,{\overline{r}}\,^4\,{\overline{\overline{%
\overline{r}}}}\,-\,490\,\,{\overline{r}}^{\,3}\,\,{\overline{\overline{r}}}^{\,\,%
2}\,+\,140\,\,{\overline{r}}\,^3\,{\overline{\overline{\overline{%
\overline{r}}}}}\,+\,280\,\overline{r}^{\,2}\,{\overline{\overline{r}}\,}\,{%
\overline{\overline{\overline{r}}}}\,-\,42\,\overline{r}^{\,2}\,\,\overline{{%
\overline{\overline{\overline{\overline{r}}}}}}\,+\,}{105\,\overline{r}\,{%
\overline{\overline{r}}}\,^3\,-\,105\,\overline{r}\,\overline{{\overline{r}}}\,{%
\overline{\overline{\overline{\overline{r}}}}}\,+\,0\,\overline{r}\,\,{\overline{%
\overline{\overline{r}}}}^{\,\,2}\,+\,7\,\overline{r}\,\overline{\overline{{%
\overline{\overline{\overline{\overline{r}}}}}}}\,-\,35\,\,\overline{{%
\overline{r}}}^{\,2}\,{\overline{\overline{\overline{{r}}}}}\,+\,21\,%
\overline{\overline{r}}\,\,\overline{{\overline{\overline{\overline{%
\overline{r}}}}}}\,+\,0\,\,\overline{{\overline{\overline{r}}}}\,{\overline{%
\overline{\overline{\overline{r}}}}}\,-\,1\,\overline{\overline{{\overline{%
\overline{\overline{\overline{\overline{r}}}}}}}}}$&$1113$\\
\hline
$3$&$4$&$\genfrac{}{}{0pt}{}{-972\,\,{\overline{r}}^{\,\,7}\,+\,3402\,\,{\overline{r}}^{\,\,5}\,{%
\overline{\overline{r}}}\,-\,1134\,\,{\overline{r}}\,^4\,{\overline{\overline{%
\overline{r}}}}\,-\,3402\,\,{\overline{r}}^{\,3}\,\,{\overline{\overline{r}}}^{\,%
\,2}\,+\,315\,\,{\overline{r}}\,^3\,{\overline{\overline{\overline{%
\overline{r}}}}}\,+\,1638\,\overline{r}^{\,2}\,{\overline{\overline{r}}\,}\,{%
\overline{\overline{\overline{r}}}}\,-\,63\,\overline{r}^{\,2}\,\,\overline{{%
\overline{\overline{\overline{\overline{r}}}}}}\,+\,}{882\,\overline{r}\,{%
\overline{\overline{r}}}\,^3\,-\,315\,\overline{r}\,\overline{{\overline{r}}}\,{%
\overline{\overline{\overline{\overline{r}}}}}\,-\,140\,\overline{r}\,\,{%
\overline{\overline{\overline{r}}}}^{\,\,2}\,+\,7\,\overline{r}\,\overline{%
\overline{{\overline{\overline{\overline{\overline{r}}}}}}}\,-\,294\,\,%
\overline{{\overline{r}}}^{\,2}\,{\overline{\overline{\overline{%
{r}}}}}\,+\,42\,\overline{\overline{r}}\,\,\overline{{\overline{%
\overline{\overline{\overline{r}}}}}}\,+\,35\,\,\overline{{\overline{%
\overline{r}}}}\,{\overline{\overline{\overline{\overline{r}}}}}\,-\,1\,%
\overline{\overline{{\overline{\overline{\overline{\overline{%
\overline{r}}}}}}}}}$&$6321$\\
\hline
$4$&$3$&$\genfrac{}{}{0pt}{}{-3072\,\,{\overline{r}}^{\,\,7}\,+\,10752\,\,{\overline{r}}^{\,\,5}\,{%
\overline{\overline{r}}}\,-\,3136\,\,{\overline{r}}\,^4\,{\overline{\overline{%
\overline{r}}}}\,-\,11088\,\,{\overline{r}}^{\,3}\,\,{\overline{\overline{r}}}^{\,%
\,2}\,+\,616\,\,{\overline{r}}\,^3\,{\overline{\overline{\overline{%
\overline{r}}}}}\,+\,5040\,\overline{r}^{\,2}\,{\overline{\overline{r}}\,}\,{%
\overline{\overline{\overline{r}}}}\,-\,84\,\overline{r}^{\,2}\,\,\overline{{%
\overline{\overline{\overline{\overline{r}}}}}}\,+\,}{3024\,\overline{r}\,{%
\overline{\overline{r}}}\,^3\,-\,714\,\overline{r}\,\overline{{\overline{r}}}\,{%
\overline{\overline{\overline{\overline{r}}}}}\,-\,532\,\overline{r}\,\,{%
\overline{\overline{\overline{r}}}}^{\,\,2}\,+\,7\,\overline{r}\,\overline{%
\overline{{\overline{\overline{\overline{\overline{r}}}}}}}\,-\,1008\,\overline{{%
\overline{r}}}^{\,2}\,{\overline{\overline{\overline{{r}}}}}\,+\,63\,%
\overline{\overline{r}}\,\,\overline{{\overline{\overline{\overline{%
\overline{r}}}}}}\,+\,133\,\,\overline{{\overline{\overline{r}}}}\,{\overline{%
\overline{\overline{\overline{r}}}}}\,-\,1\,\overline{\overline{{\overline{%
\overline{\overline{\overline{\overline{r}}}}}}}}}$&$19635$\\
\hline
$5$&$2$&$\genfrac{}{}{0pt}{}{-6250\,\,{\overline{r}}^{\,\,7}\,+\,21875\,\,{\overline{r}}^{\,\,5}\,{%
\overline{\overline{r}}}\,-\,6125\,\,{\overline{r}}\,^4\,{\overline{\overline{%
\overline{r}}}}\,-\,22750\,\,{\overline{r}}^{\,3}\,\,{\overline{\overline{r}}}^{\,%
\,2}\,+\,1050\,\,{\overline{r}}\,^3\,{\overline{\overline{\overline{%
\overline{r}}}}}\,+\,10150\,\overline{r}^{\,2}\,{\overline{\overline{r}}\,}\,{%
\overline{\overline{\overline{r}}}}\,-\,112\,\overline{r}^{\,2}\,\,\overline{{%
\overline{\overline{\overline{\overline{r}}}}}}\,+\,}{6300\,\overline{r}\,{%
\overline{\overline{r}}}\,^3\,-\,1295\,\overline{r}\,\overline{{\overline{r}}}\,{%
\overline{\overline{\overline{\overline{r}}}}}\,-\,1120\,\overline{r}\,\,{%
\overline{\overline{\overline{r}}}}^{\,\,2}\,+\,7\,\overline{r}\,\overline{%
\overline{{\overline{\overline{\overline{\overline{r}}}}}}}\,-\,2100\,\,%
\overline{{\overline{r}}}^{\,2}\,{\overline{\overline{\overline{%
{r}}}}}\,+\,91\,\overline{\overline{r}}\,\,\overline{{\overline{%
\overline{\overline{\overline{r}}}}}}\,+\,280\,\,\overline{{\overline{%
\overline{r}}}}\,{\overline{\overline{\overline{\overline{r}}}}}\,-\,1\,%
\overline{\overline{{\overline{\overline{\overline{\overline{%
\overline{r}}}}}}}}}$&$39753$\\
\hline
$6$&$1$&$\genfrac{}{}{0pt}{}{-7776\,\,{\overline{r}}^{\,\,7}\,+\,27216\,\,{\overline{r}}^{\,\,5}\,{%
\overline{\overline{r}}}\,-\,7560\,\,{\overline{r}}\,^4\,{\overline{\overline{%
\overline{r}}}}\,-\,28350\,\,{\overline{r}}^{\,3}\,\,{\overline{\overline{r}}}^{\,%
\,2}\,+\,1260\,\,{\overline{r}}\,^3\,{\overline{\overline{\overline{%
\overline{r}}}}}\,+\,12600\,\overline{r}^{\,2}\,{\overline{\overline{r}}\,}\,{%
\overline{\overline{\overline{r}}}}\,-\,126\,\overline{r}^{\,2}\,\,\overline{{%
\overline{\overline{\overline{\overline{r}}}}}}\,+\,}{7875\,\overline{r}\,{%
\overline{\overline{r}}}\,^3\,-\,1575\,\overline{r}\,\overline{{\overline{r}}}\,{%
\overline{\overline{\overline{\overline{r}}}}}\,-\,1400\,\overline{r}\,\,{%
\overline{\overline{\overline{r}}}}^{\,\,2}\,+\,7\,\overline{r}\,\overline{%
\overline{{\overline{\overline{\overline{\overline{r}}}}}}}\,-\,2625\,\,%
\overline{{\overline{r}}}^{\,2}\,{\overline{\overline{\overline{%
{r}}}}}\,+\,105\,\overline{\overline{r}}\,\,\overline{{\overline{%
\overline{\overline{\overline{r}}}}}}\,+\,350\,\,\overline{{\overline{%
\overline{r}}}}\,{\overline{\overline{\overline{\overline{r}}}}}\,-\,1\,%
\overline{\overline{{\overline{\overline{\overline{\overline{%
\overline{r}}}}}}}}}$&$49413$\\
\hline
{\small $7$}&{\small $0$}&{\small $0$}&{\small $0$} \\ 
\hline
$8$&$-1$&$\genfrac{}{}{0pt}{}{32768\,\,{\overline{r}}^{\,\,7}\,-\,114688\,\,{\overline{r}}^{\,\,5}\,{%
\overline{\overline{r}}}\,+\,32256\,\,{\overline{r}}\,^4\,{\overline{\overline{%
\overline{r}}}}\,+\,11968\,\,{\overline{r}}^{\,3}\,\,{\overline{\overline{r}}}^{\,%
\,2}\,-\,5600\,\,{\overline{r}}\,^3\,{\overline{\overline{\overline{%
\overline{r}}}}}\,-\,53312\,\overline{r}^{\,2}\,{\overline{\overline{r}}\,}\,{%
\overline{\overline{\overline{r}}}}\,+\,616\,\overline{r}^{\,2}\,\,\overline{{%
\overline{\overline{\overline{\overline{r}}}}}}\,-\,}{32928\,\overline{r}\,{%
\overline{\overline{r}}}\,^3\,+\,6860\,\overline{r}\,\overline{{\overline{r}}}\,{%
\overline{\overline{\overline{\overline{r}}}}}\,+\,5880\,\overline{r}\,\,{%
\overline{\overline{\overline{r}}}}^{\,\,2}\,-\,42\,\overline{r}\,\overline{%
\overline{{\overline{\overline{\overline{\overline{r}}}}}}}\,+\,10976\,\,%
\overline{{\overline{r}}}^{\,2}\,{\overline{\overline{\overline{%
{r}}}}}\,-\,490\,\overline{\overline{r}}\,\,\overline{{\overline{%
\overline{\overline{\overline{r}}}}}}\,-\,1470\,\,\overline{{\overline{%
\overline{r}}}}\,{\overline{\overline{\overline{\overline{r}}}}}\,+\,6\,%
\overline{\overline{{\overline{\overline{\overline{\overline{%
\overline{r}}}}}}}}}$&$208530$\\
\hline
$9$&$-2$&$\genfrac{}{}{0pt}{}{118098\,\,{\overline{r}}^{\,\,7}\,-\,413343\,\,{\overline{r}}^{\,\,5}\,{%
\overline{\overline{r}}}\,+\,117369\,\,{\overline{r}}\,^4\,{\overline{\overline{%
\overline{r}}}}\,+\,428652\,\,{\overline{r}}^{\,3}\,\,{\overline{\overline{r}}}^{%
\,\,2}\,-\,20979\,\,{\overline{r}}\,^3\,{\overline{\overline{\overline{%
\overline{r}}}}}\,-\,192780\,\overline{r}^{\,2}\,{\overline{\overline{r}}\,}\,{%
\overline{\overline{\overline{r}}}}\,+\,2457\,\overline{r}^{\,2}\,\,\overline{{%
\overline{\overline{\overline{\overline{r}}}}}}\,-}{117936\,\overline{r}\,{%
\overline{\overline{r}}}\,^3\,+\,25326\,\overline{r}\,\overline{{\overline{r}}}\,%
{\overline{\overline{\overline{\overline{r}}}}}\,+\,21168\,\overline{r}\,\,{%
\overline{\overline{\overline{r}}}}^{\,\,2}\,-\,189\,\overline{r}\,\overline{%
\overline{{\overline{\overline{\overline{\overline{r}}}}}}}\,+\,39312\,%
\overline{{\overline{r}}}^{\,2}\,{\overline{\overline{\overline{%
{r}}}}}\,-\,1890\,\overline{\overline{r}}\,\,\overline{{\overline{%
\overline{\overline{\overline{r}}}}}}\,-\,5292\,\,\overline{{\overline{%
\overline{r}}}}\,{\overline{\overline{\overline{\overline{r}}}}}\,+\,27\,%
\overline{\overline{{\overline{\overline{\overline{\overline{%
\overline{r}}}}}}}}}$&$752409$\\
\hline
$10$&$-3$&$\genfrac{}{}{0pt}{}{300000\,\,{\overline{r}}^{\,\,7}\,-\,1050000\,\,{\overline{r}}^{\,\,5}\,{%
\overline{\overline{r}}}\,+\,301000\,\,{\overline{r}}\,^4\,{\overline{\overline{%
\overline{r}}}}\,+\,1086750\,\,{\overline{r}}^{\,3}\,\,{\overline{%
\overline{r}}}^{\,\,2}\,-\,55300\,\,{\overline{r}}\,^3\,{\overline{\overline{%
\overline{\overline{r}}}}}\,-\,491400\,\overline{r}^{\,2}\,{\overline{%
\overline{r}}\,}\,{\overline{\overline{\overline{r}}}}\,+\,6846\,\overline{r}^{\,%
2}\,\,\overline{{\overline{\overline{\overline{\overline{r}}}}}}\,-}{297675\,%
\overline{r}\,{\overline{\overline{r}}}\,^3\,+\,65385\,\overline{r}\,\overline{{%
\overline{r}}}\,{\overline{\overline{\overline{\overline{r}}}}}\,+\,53760\,%
\overline{r}\,\,{\overline{\overline{\overline{r}}}}^{\,\,2}\,-\,581\,%
\overline{r}\,\overline{\overline{{\overline{\overline{\overline{%
\overline{r}}}}}}}\,+\,99225\,\,\overline{{\overline{r}}}^{\,2}\,{\overline{%
\overline{\overline{{r}}}}}\,-\,5103\,\overline{\overline{r}}\,\,%
\overline{{\overline{\overline{\overline{\overline{r}}}}}}\,-\,13440\,\,%
\overline{{\overline{\overline{r}}}}\,{\overline{\overline{\overline{%
\overline{r}}}}}\,+\,83\,\overline{\overline{{\overline{\overline{\overline{%
\overline{\overline{r}}}}}}}}}$&$1913499$\\
\hline
\end{tabular}%
\end{Large}
\end{landscape}

\textbf{Sub-Supportive Normalized Girard-Waring Results}
\\ \noindent
To present our results, let
$\mathcal{S}:=\left\{s_{_1},\,s_{_2}\right\}=\mathcal{R}^{\left(D\,-\,2%
\right)}_{_f}$ where $D$ is the degree of $f$;
let$\,\,\mathcal{T}:=\left\{t_{_1},\,t_{_2},\,t_{_3}\right\}=\mathcal{R}^{%
\left(D\,-\,3\right)}_{_f};$ let \
$\mathcal{U}:=\mathcal{R}^{\left(D\,-\,4\right)}_{_f}$ etc. \ \ We have the
following results.  \qquad \\ \textbf{N.B.:}  The coefficients in Table $\overline{\text{GW}}\boldsymbol{.}\boldsymbol{\mathfrak{n}}$=2 are exactly the coefficients of Tchebyshev polynomials of the first kind.  Thus these tables generalize Tchebyshev polynomials. \\[1\baselineskip]
\textbf{Table \ $\overline{\text{GW}}\boldsymbol{.}\boldsymbol{\mathfrak{n}}$=2} \\[-0.5\baselineskip]
\begin{tabular}[t]{|l|l|}
\hline
{\setlength{\tabcolsep}{0pt}\begin{tabular}{l}
Sum of Positive\\
Coefficients
\end{tabular}}%
&Data Family \ $\mathcal{F}_{_2}\equiv\mathcal{S}=\left\{s_1,\,s_2\right\}$\\
\hline
$1:$&$\overline{s^1}\,=\,\overline{s}^{\,\,1}$\\
\hline
$2:$&$\overline{s^2}\,=\,2\,\,\overline{s}^{\,\,2}-1\,\overline{%
\overline{s}}$\\
\hline
$4:$&$\overline{s^3}\,=\,{4\,\,\overline{s}^{\,\,3}-3\,\overline{s}\,\,%
\overline{\overline{s}}}$\\
\hline
$9:$&$\overline{s^4}\,=\,8\,\overline{s}^{\,\,4}-8\,\overline{s}^{\,2}\,%
\overline{\overline{s}}\,+1\,\overline{\overline{s}}^{\,\,2}$\\
\hline
$21:$&$\overline{s^5}\,=\,16\,\overline{s}^{\,\,5}-20\,\overline{s}^{\,\,3}\,%
\overline{\overline{s}}\,+5\,\overline{s}\,\overline{\overline{s}}^{\,\,2}$\\
\hline
$50:$&$\overline{s^6}\,=\,32\,\overline{s}^{\,\,6}-48\,\overline{s}^{\,\,4}\,%
\overline{\overline{s}}\,\,+18\,\overline{s}^{\,2}\,\,\overline{%
\overline{s}}^{\,\,2}\,-\overline{\overline{s}}^{\,3}$\\
\hline
$120:$&$\overline{s^7}\,=\,64\,\overline{s}^{\,\,7}\,-\,112\,\overline{s}^{\,%
\,5}\,\overline{\overline{s}}\,+56\,\overline{s}^{\,3}\,\overline{%
\overline{s}}^{\,\,2}\,-\,7\,\overline{s}\,\overline{\overline{s}}^{\,\,3}$\\
\hline
$289:$&$\overline{s^8}\,=\,128\,\overline{s}^{\,\,8}\,-\,256\,\overline{s}^{\,%
\,6}\,\overline{\overline{s}}\,+\,160\,\overline{s}^{\,4}\,\overline{%
\overline{s}}^{\,\,2}\,-\,32\,\overline{s}^{\,\,2}\,\overline{\overline{s}}^{%
\,\,3}\,+1\,\overline{\overline{s}}^{\ 4}$\\
\hline
\end{tabular}%
\\[1\baselineskip]
\textbf{Table \ $\overline{\text{GW}}\boldsymbol{.}\boldsymbol{\mathfrak{n}}$=3} \\[-0.5\baselineskip]
\begin{tabular}[t]{|l|l|}
\hline
{\setlength{\tabcolsep}{0pt}\begin{tabular}{l}
Sum of Positive\\
Coefficients
\end{tabular}}%
&Data Family \
$\mathcal{F}_{_3}\equiv\mathcal{T}=\left\{t_1,\,t_2,\,t_3\right\}$\\
\hline
$1:$&$\overline{t^1}\,=\,\overline{t}^{\,\,1}$\\
\hline
$3:$&$\overline{t^2}\,=\,3\,\,\overline{t}^{\,\,2}-2\,\overline{%
\overline{t}}$\\
\hline
$10:$&$\overline{t^3}\,=\,9\,\,\overline{t}^{\,\,3}-9\,\overline{t}\,\,%
\overline{\overline{t}}\,+\overline{\overline{\overline{t}}}$\\
\hline
$37:$&$\overline{t^4}\,=\,27\,\overline{t}^{\,\,4}-36\,\overline{t}^{\,2}\,%
\overline{\overline{t}}\,+\,4\,\overline{t}\,\overline{\overline{%
\overline{t}}}\,\,+\,6\,\overline{\overline{t}}^{\,\,2}$\\
\hline
$141:$&$\overline{t^5}\,=\,81\,\overline{t}^{\,\,5}-135\,\overline{t}^{\,\,3}%
\,\overline{\overline{t}}\,\,+15\,\overline{t}^{\,\,2}\,\overline{\overline{%
\overline{t}}}\,\,+45\,\overline{t}\,\overline{\overline{t}}^{\,\,2}\,-5\,%
\overline{\overline{t}}\,\overline{\overline{\overline{t}}}$\\
\hline
$541:$&%
\begin{tabular}{l}
$\overline{t^6}\,=\,243\,\overline{t}^{\,\,6}-486\,\overline{t}^{\,\,4}\,%
\overline{\overline{t}}\,\,+54\,\overline{t}^{\,3}\,\,\overline{\overline{%
\overline{t}}}\,+243\,\overline{t}^{\,2}\,\,\overline{\overline{t}}^{\,\,2}\,%
-$\\
$\,\,\,\,\,\,\,\,\,\,\,\,\,\,\,\,\,\,\,\,\,\,\,\,\,\,36\,\overline{t}\,\,%
\overline{\overline{t}}\,\,\overline{\overline{\overline{t}}}\,\,-18\,%
\overline{\overline{t}}^{\,\,3}
+1\,\overline{\overline{\overline{t}}}^{\,\,2}$
\end{tabular}%
$\,\,$\\
\hline
$2080:$&%
\begin{tabular}{l}
$\overline{t^7}\,=\,729\,\overline{t}^{\,\,7}\,-\,1701\,\overline{t}^{\,\,5}%
\,\overline{\overline{t}}\,+189\,\overline{t}^{\,\,4}\,\overline{\overline{%
\overline{t}}}\,+1134\,\overline{t}^{\,\,3}\,\overline{\overline{t}}^{\,\,%
2}-$\\
$\,\,\,\,\,\,\,\,\,\,\,\,\,\,\,\,\,\,\,\,\,\,\,\,\,\,\,\,189\,\overline{t}^{\,%
\,2}\,\,\overline{\overline{t}}\,\,\overline{\overline{\overline{t}}}\,-\,189%
\,\overline{t}\,\,\overline{\overline{t}}^{\,\,3}+7\,\overline{t}\,\,%
\overline{\overline{\overline{t}}}^{\,\,2}+\,21\,\overline{\overline{t}}^{\,\,%
2}\,\,\overline{\overline{\overline{t}}}$
\end{tabular}%
$\,$\\
\hline
\end{tabular}%
\\[1\baselineskip]
\newpage
\noindent
\textbf{Table \ $\overline{\text{GW}}\boldsymbol{.}\boldsymbol{\mathfrak{n}}$=4} \\[-0.5\baselineskip]
\begin{tabular}[t]{|l|l|}
\hline
{\setlength{\tabcolsep}{0pt}\begin{tabular}{l}
Sum of Positive\\
Coefficients
\end{tabular}}%
&Data Family \
$\mathcal{F}_{_4}\equiv\mathcal{U}=\left\{u_1,\,u_2,\,u_3,\,u_4\right\}$\\
\hline
$1:$&$\overline{u^1}\,=\,\overline{u}^{\,\,1}$\\
\hline
$4:$&$\overline{u^2}\,=\,4\,\,\overline{u}^{\,\,2}-3\,\overline{%
\overline{u}}$\\
\hline
$19:$&$\overline{u^3}\,=\,16\,\,\overline{u}^{\,\,3}-18\,\overline{u}\,\,%
\overline{\overline{u}}\,+3\,\overline{\overline{\overline{u}}}$\\
\hline
$98:$&$\overline{u^4}\,=\,64\,\overline{u}^{\,\,4}-96\,\overline{u}^{\,2}\,%
\overline{\overline{u}}\,+\,16\,\overline{u}\,\overline{\overline{%
\overline{u}}}\,\,+\,18\,\overline{\overline{u}}^{\,\,2}\,-\,\overline{%
\overline{\overline{\overline{u}}}}$\\
\hline
$516:$&%
{\setlength{\tabcolsep}{0pt}\begin{tabular}{l}
$\overline{u^5}\,=\,256\,\overline{u}^{\,\,5}-480\,\overline{u}^{\,\,3}\,%
\overline{\overline{u}}\,\,+80\,\overline{u}^{\,\,2}\,\overline{\overline{%
\overline{u}}}\,\,+180\,\overline{u}\,\overline{\overline{u}}^{\,\,2}\,-$\\
$\,\,\,\,\,\,\,\,\,\,\,\,\,\,\,\,\,\,\,\,\,\,\,\,\,\,\,\,30\,\overline{%
\overline{u}}\,\,\overline{\overline{\overline{u}}}\,\,-5\,\overline{u}\,\,%
\overline{\overline{\overline{\overline{u}}}}$
\end{tabular}}%
\\
\hline
$2725:$&%
{\setlength{\tabcolsep}{0pt}\begin{tabular}{l}
$\overline{u^6}\,=\,1024\,\overline{u}^{\,\,6}-2304\,\overline{u}^{\,\,4}\,%
\overline{\overline{u}}\,\,+384\,\overline{u}^{\,3}\,\,\overline{\overline{%
\overline{u}}}\,+1296\,\overline{u}^{\,2}\,\,\overline{\overline{u}}^{\,\,2}\,%
-$\\
$\,\,\,\,\,\,\,\,\,\,\,\,\,\,\,\,\,\,\,\,\,\,\,288\,\overline{u}\,\,%
\overline{\overline{u}}\,\,\overline{\overline{\overline{u}}}\,\,-108\,%
\overline{\overline{u}}^{\,\,3}-24\,\overline{u}^{\,\,2}\,\,\overline{%
\overline{\overline{\overline{u}}}}\,\,+9\,\overline{\overline{u}}\,\,%
\overline{\overline{\overline{\overline{u}}}}\,\,+12\,\overline{\overline{%
\overline{u}}}^{\,\,2}$
\end{tabular}}%
\\
\hline
$14400:$&%
{\setlength{\tabcolsep}{0pt}\begin{tabular}{l}
$\overline{u^7}\,=\,4096\,\overline{u}^{\,\,7}\,-\,10752\,\overline{u}^{\,\,5%
\,}\,\overline{\overline{u}}\,+1792\,\overline{u}^{\,\,4}\,\overline{%
\overline{\overline{u}}}\,+$\\
$\,\,\,\,\,\,\,\,\,\,\,\,\,\,\,\,\,\,\,112\cdot
72\,\overline{u}^{\,\,3}\,\overline{\overline{u}}^{\,\,2}-\,2016\,%
\overline{u}^{\,\,2}\,\,\overline{\overline{u}}\,\,\overline{\overline{%
\overline{u}}}\,-1512\,\overline{u}\,\,\overline{\overline{u}}^{\,\,3}+\,$\\
$\,\,\,\,\,\,\,\,\,\,\,\,\,112\,\overline{u}\,\,\overline{\overline{%
\overline{u}}}^{\,\,2}+\,252\,\overline{\overline{u}}^{\,\,2}\,\,\overline{%
\overline{\overline{u}}}-112\,\overline{u}^{\,\,3}\,\overline{\overline{%
\overline{\overline{u}}}}\,\,+\,84\,\overline{u}\,\overline{\overline{u}}\,%
\overline{\overline{\overline{\overline{u}}}}\,\,-7\,\overline{\overline{\overline{u}}}\,%
\,\overline{\overline{\overline{\overline{u}}}}$
\end{tabular}}%
\\
\hline
\end{tabular}%
\\[1\baselineskip]
\textbf{Table \ $\overline{\text{GW}}\boldsymbol{.}\boldsymbol{\mathfrak{n}}$=5} \\[-0.5\baselineskip]
\begin{tabular}[t]{|l|l|}
\hline
{\setlength{\tabcolsep}{0pt}\begin{tabular}{l}
Sum of Positive\\
Coefficients
\end{tabular}}%
&Data Family \
$\mathcal{F}_{_5}\equiv\mathcal{V}=\left\{v_1,\,v_2,\,v_3,\,v_4,\,v_5\right%
\}$\\
\hline
$1:$&$\overline{v^1}\,=\,\overline{v}^{\,\,1}$\\
\hline
$5:$&$\overline{v^2}\,=\,5\,\,\overline{v}^{\,\,2}-4\,\overline{%
\overline{v}}$\\
\hline
$31:$&$\overline{v^3}\,=\,25\,\,\overline{v}^{\,\,3}-30\,\overline{v}\,\,%
\overline{\overline{v}}\,+6\,\overline{\overline{\overline{v}}}$\\
\hline
$205:$&$\overline{v^4}\,=\,125\,\overline{v}^{\,\,4}-200\,\overline{v}^{\,2}\,%
\overline{\overline{v}}\,+\,40\,\overline{v}\,\,\overline{\overline{%
\overline{v}}}\,\,+\,40\,\overline{\overline{v}}^{\,\,2}\,-4\,\overline{%
\overline{\overline{\overline{v}}}}$\\
\hline
$1376:$&%
{\setlength{\tabcolsep}{0pt}\begin{tabular}{l}
$\overline{v^5}\,=\,625\,\overline{v}^{\,\,5}-1250\,\overline{v}^{\,\,3}\,%
\overline{\overline{v}}\,\,+250\,\overline{v}^{\,\,2}\,\overline{\overline{%
\overline{v}}}\,\,+500\,\overline{v}\,\,\overline{\overline{v}}^{\,\,2}\,-$\\
$\,\,\,\,\,\,\,\,\,\,\,\,\,\,\,\,\,\,\,\,\,\,\,\,\,\,\,\,\,\,\,\,\,\,\,\,\,\,%
\,\,\,\,\,\,\,\,\,\,\,\,\,\,\,\,\,\,\,\,\,\,\,\,100\,\overline{\overline{v}}\,%
\,\overline{\overline{\overline{v}}}\,\,-25\,\overline{v}\,\,\overline{%
\overline{\overline{\overline{v}}}}\,\,+\,1\,\overline{\overline{\overline{%
\overline{\overline{v}}}}}$
\end{tabular}}%
$\,$\\
\hline
$9251:$&%
{\setlength{\tabcolsep}{0pt}\begin{tabular}{l}
$\overline{v^6}\,=\,3125\,\overline{v}^{\,\,6}-7500\,\overline{v}^{\,\,4}\,%
\overline{\overline{v}}\,\,+1500\,\overline{v}^{\,3}\,\,\overline{\overline{%
\overline{v}}}\,+$\\
$\,\,\,\,\,\,\,\,\,\,\,\,\,\,\,\,\,\,\,\,\,\,\,\,\,\,\,\,\,\,\,\,4500\,\,%
\overline{v}^{\,2}\,\,\overline{\overline{v}}^{\,\,2}\,-1200\,\overline{v}\,\,%
\overline{\overline{v}}\,\,\overline{\overline{\overline{v}}}\,\,-400\,%
\overline{\overline{v}}^{\,\,3}-$\\
$\,\,\,\,\,\,\,\,\,\,\,\,\,\,\,\,\,\,\,\,\,\,\,\,\,\,\,\,\,\,\,\,150\,%
\overline{v}^{\,\,2}\,\,\overline{\overline{\overline{\overline{v}}}}\,\,+60\,%
\overline{\overline{v}}\,\,\overline{\overline{\overline{\overline{v}}}}\,\,%
+60\,\overline{\overline{\overline{v}}}^{\,\,2}\,+6\,\overline{v}\,\overline{%
\overline{\overline{\overline{\overline{v}}}}}$
\end{tabular}}%
\\
\hline
$62210:$&%
{\setlength{\tabcolsep}{0pt}\begin{tabular}{l}
$\overline{v^7}\,=\,15625\,\overline{v}^{\,\,7}\,-\,43750\,\overline{v}^{\,\,%
5\,}\,\overline{\overline{v}}\,+8750\,\overline{v}^{\,\,4}\,\overline{%
\overline{\overline{v}}}\,+35000\,\overline{v}^{\,\,3}\,\overline{%
\overline{v}}^{\,\,2}-\,$\\
$\,\,\,\,\,\,\,\,\,\,\,\,\,\,10,500\,\overline{v}^{\,\,2}\,\,\overline{%
\overline{v}}\,\,\overline{\overline{\overline{v}}}\,-\,7000\,\overline{v}\,\,%
\overline{\overline{v}}^{\,\,3}+700\,\overline{v}\,\,\overline{\overline{%
\overline{v}}}^{\,\,2}+\,1400\,\overline{\overline{v}}^{\,\,2}\,\,\overline{%
\overline{\overline{v}}}-$\\
$\,\,\,\,\,\,\,\,\,\,\,\,\,\,\,\,\,875\,\overline{v}^{\,\,3}\,\overline{%
\overline{\overline{\overline{v}}}}\,\,+\,700\,\overline{v}\,\overline{%
\overline{v}}\,\overline{\overline{\overline{\overline{v}}}}\,\,-70\,%
\overline{\overline{\overline{v}}}\,\,\overline{\overline{\overline{%
\overline{v}}}}\,\,+35\,\overline{v}^{\,\,2}\,\overline{\overline{\overline{%
\overline{\overline{v}}}}}\,\,-\,14\,\overline{\overline{v}}\,\,\overline{%
\overline{\overline{\overline{\overline{v}}}}}$
\end{tabular}}%
\\
\hline
\end{tabular}%
\\[1\baselineskip]
\newpage
\textbf{Table \ $\overline{\text{GW}}\boldsymbol{.}\boldsymbol{\mathfrak{n}}$=6} \\[-0.5\baselineskip]
\begin{tabular}[t]{|l|l|}
\hline
{\setlength{\tabcolsep}{0pt}\begin{tabular}{l}
Sum of Positive\\
Coefficients
\end{tabular}}%
&Data Family \
$\mathcal{F}_{_6}\equiv\mathcal{W}=\left\{w_1,\,w_2,\,w_3,\,w_4,\,w_5,\,w_6%
\right\}$\\
\hline
$1:$&$\overline{w^1}\,=\,\overline{w}^{\,\,1}$\\
\hline
$6:$&$\overline{w^2}\,=\,6\,\,\overline{w}^{\,\,2}-5\,\overline{%
\overline{w}}$\\
\hline
$46:$&$\overline{w^3}\,=\,36\,\,\overline{w}^{\,\,3}-45\,\overline{w}\,\,%
\overline{\overline{w}}\,+10\,\overline{\overline{\overline{w}}}$\\
\hline
$371:$&$\overline{w^4}\,=\,216\,\overline{w}^{\,\,4}-360\,\overline{w}^{\,2}\,%
\overline{\overline{w}}\,+\,80\,\overline{w}\,\,\overline{\overline{%
\overline{w}}}\,\,+\,75\,\overline{\overline{w}}^{\,\,2}\,-10\,\overline{%
\overline{\overline{\overline{w}}}}$\\
\hline
$3026:$&%
{\setlength{\tabcolsep}{0pt}\begin{tabular}{l}
$\,\overline{w^5}\,=\,1296\,\overline{w}^{\,\,5}-2700\,\overline{w}^{\,\,3}\,%
\overline{\overline{w}}\,\,+600\,\overline{w}^{\,\,2}\,\overline{\overline{%
\overline{w}}}\,\,+1125\,\overline{w}\,\,\overline{\overline{w}}^{\,\,2}\,-$\\
$\,\,\,\,\,\,\,\,\,\,\,\,\,\,\,\,\,\,\,\,\,\,\,\,\,\,\,\,\,\,\,\,\,\,\,\,\,\,%
\,\,\,\,\,\,\,\,\,\,\,\,\,\,250\,\overline{\overline{w}}\,\,\overline{%
\overline{\overline{w}}}\,\,-75\,\overline{w}\,\,\overline{\overline{%
\overline{\overline{w}}}}\,\,+\,5\,\overline{\overline{\overline{\overline{%
\overline{w}}}}}$
\end{tabular}}%
\\
\hline
$24707:$&%
{\setlength{\tabcolsep}{0pt}\begin{tabular}{l}
$\overline{w^6}\,=\,7776\,\overline{w}^{\,\,6}-19440\,\overline{w}^{\,\,4}\,%
\overline{\overline{w}}\,\,+4320\,\overline{w}^{\,3}\,\,\overline{\overline{%
\overline{w}}}\,+$\\
$\,\,\,\,\,\,\,\,\,\,\,\,\,\,\,\,\,\,12150\,\,\overline{w}^{\,2}\,\,%
\overline{\overline{w}}^{\,\,2}\,-3600\,\overline{w}\,\,\overline{%
\overline{w}}\,\,\overline{\overline{\overline{w}}}\,\,-\,1125\,\overline{%
\overline{w}}^{\,\,3}-$\\
$\,\,\,\,\,\,\,\,\,\,\,\,540\,\overline{w}^{\,\,2}\,\,\overline{\overline{%
\overline{\overline{w}}}}\,\,+225\,\overline{\overline{w}}\,\,\overline{%
\overline{\overline{\overline{w}}}}\,\,+200\,\overline{\overline{%
\overline{w}}}^{\,\,2}+36\,\overline{w}\,\overline{\overline{\overline{%
\overline{\overline{w}}}}}\,\,-\,1\,\overline{\overline{\overline{\overline{%
\overline{\overline{w}}}}}}$
\end{tabular}}%
\\
\hline
$201748:$&%
{\setlength{\tabcolsep}{0pt}\begin{tabular}{l}
$\overline{w^7}\,=\,46656\,\overline{w}^{\,\,7}\,-\,136080\,\overline{w}^{\,\,%
5\,}\,\overline{\overline{w}}\,+30240\,\overline{w}^{\,\,4}\,\overline{%
\overline{\overline{w}}}\,+$\\
$\,\,\,\,\,\,\,\,\,\,\,\,\,\,\,\,113400\,\overline{w}^{\,\,3}\,\overline{%
\overline{w}}^{\,\,2}-\,37,800\,\overline{w}^{\,\,2}\,\,\overline{%
\overline{w}}\,\,\overline{\overline{\overline{w}}}\,-\,23625\,\overline{w}\,%
\,\overline{\overline{w}}^{\,\,3}+$\\
$\,\,\,\,\,\,\,\,\,\,\,\,\,\,2800\,\overline{w}\,\,\overline{\overline{%
\overline{w}}}^{\,\,2}+\,5250\,\overline{\overline{w}}^{\,\,2}\,\,\overline{%
\overline{\overline{w}}}-3780\,\overline{w}^{\,\,3}\,\overline{\overline{%
\overline{\overline{w}}}}\,+$\\
$\,\,\,\,\,\,\,\,\,\,\,3150\,\overline{w}\,\overline{\overline{w}}\,%
\overline{\overline{\overline{\overline{w}}}}\,\,-350\,\overline{\overline{%
\overline{w}}}\,\,\overline{\overline{\overline{\overline{w}}}}\,\,+252\,%
\overline{w}^{\,\,2}\,\overline{\overline{\overline{\overline{%
\overline{w}}}}}\,\,-\,105\,\overline{\overline{w}}\,\,\overline{\overline{%
\overline{\overline{\overline{w}}}}}\,\,-\,7\,\overline{\overline{w}}\,\,%
\overline{\overline{\overline{\overline{\overline{w}}}}}$
\end{tabular}}%
\\
\hline
\end{tabular}%
\\[3\baselineskip]
Collated, instead, along powers, the same information is:\\[1\baselineskip]
\noindent
\textbf{Table \ $\overline{\text{GW}}\boldsymbol{.}$deg=2}\\[-0.5\baselineskip]
\begin{tabular}[t]{|l|l|l|}
\hline
$\left|\mathcal{F}_{_n}\right|$&%
{\setlength{\tabcolsep}{0pt}\begin{tabular}{l}
Sum of Positive\\
Coefficients
\end{tabular}}%
&Quadratic\\
\hline
2&2&$\overline{s^2}\,=\,2\,\,\overline{s}^{\,\,2}-1\,\overline{\overline{s}}$%
\\
\hline
3&3&$\overline{t^2}\,=\,3\,\,\overline{t}^{\,\,2}-2\,\overline{\overline{t}}$%
\\
\hline
4&4&$\overline{u^2}\,=\,4\,\,\overline{u}^{\,\,2}-3\,\overline{\overline{u}}$%
\\
\hline
5&5&$\overline{v^2}\,=\,5\,\,\overline{v}^{\,\,2}-4\,\overline{\overline{v}}$%
\\
\hline
$6$&$6$&$\overline{w^2}\,=\,6\,\,\overline{w}^{\,\,2}-5\,\overline{%
\overline{w}}$\\
\hline
\end{tabular}%
\\[1\baselineskip]
\newpage
\noindent
\textbf{Table \ $\overline{\text{GW}}\boldsymbol{.}$deg=3}\\[-0.5\baselineskip]
\begin{tabular}[t]{|l|l|l|}
\hline
$\left|\mathcal{F}_{_n}\right|$&%
{\setlength{\tabcolsep}{0pt}\begin{tabular}{l}
Sum of $\left(+\right)$\\
Coefficients
\end{tabular}}%
&Cubic\\
\hline
2&$4$&$\overline{s^3}\,=\,4\,\,\overline{s}^{\,\,3}-3\,\overline{s}\,\,%
\overline{\overline{s}}$\\
\hline
3&$10$&$\overline{t^3}\,=\,9\,\,\overline{t}^{\,\,3}-9\,\overline{t}\,\,%
\overline{\overline{t}}\,+\overline{\overline{\overline{t}}}$\\
\hline
4&$19$&$\overline{u^3}\,=\,16\,\,\overline{u}^{\,\,3}-18\,\overline{u}\,\,%
\overline{\overline{u}}\,+3\,\overline{\overline{\overline{u}}}$\\
\hline
5&$31$&$\overline{v^3}\,=\,25\,\,\overline{v}^{\,\,3}-30\,\overline{v}\,\,%
\overline{\overline{v}}\,+6\,\overline{\overline{\overline{v}}}$\\
\hline
$6$&$46$&$\overline{w^3}\,=\,36\,\,\overline{w}^{\,\,3}-45\,\overline{w}\,\,%
\overline{\overline{w}}\,+10\,\overline{\overline{\overline{w}}}$\\
\hline
\end{tabular}%
\\[1\baselineskip]
\noindent
\textbf{Table \ $\overline{\text{GW}}\boldsymbol{.}$deg=4}\\[-0.5\baselineskip]
\begin{tabular}[t]{|l|l|l|}
\hline
$\left|\mathcal{F}_{_n}\right|$&%
{\setlength{\tabcolsep}{0pt}\begin{tabular}{l}
Sum of $\left(+\right)$\\
Coefficients
\end{tabular}}%
&Quartic\\
\hline
2&$9$&$\overline{s^4}\,=\,8\,\overline{s}^{\,\,4}-8\,\overline{s}^{\,2}\,%
\overline{\overline{s}}\,+1\,\overline{\overline{s}}^{\,\,2}$\\
\hline
3&$37$&$\overline{t^4}\,=\,27\,\overline{t}^{\,\,4}-36\,\overline{t}^{\,2}\,%
\overline{\overline{t}}\,+\,4\,\overline{t}\,\overline{\overline{%
\overline{t}}}\,\,+\,6\,\overline{\overline{t}}^{\,\,2}$\\
\hline
4&$98$&$\overline{u^4}\,=\,64\,\overline{u}^{\,\,4}-96\,\overline{u}^{\,2}\,%
\overline{\overline{u}}\,+\,16\,\overline{u}\,\overline{\overline{%
\overline{u}}}\,\,+\,18\,\overline{\overline{u}}^{\,\,2}\,-\,\overline{%
\overline{\overline{\overline{u}}}}$\\
\hline
5&$205$&$\overline{v^4}\,=\,125\,\overline{v}^{\,\,4}-200\,\overline{v}^{\,2}%
\,\overline{\overline{v}}\,+\,40\,\overline{v}\,\,\overline{\overline{%
\overline{v}}}\,\,+\,40\,\overline{\overline{v}}^{\,\,2}\,-4\,\overline{%
\overline{\overline{\overline{v}}}}$\\
\hline
$6$&$371$&$\overline{w^4}\,=\,216\,\overline{w}^{\,\,4}-360\,\overline{w}^{\,%
2}\,\overline{\overline{w}}\,+\,80\,\overline{w}\,\,\overline{\overline{%
\overline{w}}}\,\,+\,75\,\overline{\overline{w}}^{\,\,2}\,-10\,\overline{%
\overline{\overline{\overline{w}}}}$\\
\hline
\end{tabular}%
\\[1\baselineskip]
\noindent
\textbf{Table \ $\overline{\text{GW}}\boldsymbol{.}$deg=5}\\[-0.5\baselineskip]
\begin{tabular}[t]{|l|l|l|}
\hline
$\left|\mathcal{F}_{_n}\right|$&%
{\setlength{\tabcolsep}{0pt}\begin{tabular}{l}
Sum of $\left(+\right)$\\
Coefficients
\end{tabular}}%
&Quintic\\
\hline
2&$21$&$\overline{s^5}\,=\,16\,\overline{s}^{\,\,5}-20\,\overline{s}^{\,\,3}\,%
\overline{\overline{s}}\,+5\,\overline{s}\,\overline{\overline{s}}^{\,\,2}$\\
\hline
3&$141$&$\overline{t^5}\,=\,81\,\overline{t}^{\,\,5}-135\,\overline{t}^{\,\,%
3}\,\overline{\overline{t}}\,\,+15\,\overline{t}^{\,\,2}\,\overline{%
\overline{\overline{t}}}\,\,+45\,\overline{t}\,\overline{\overline{t}}^{\,\,%
2}\,-5\,\overline{\overline{t}}\,\overline{\overline{\overline{t}}}$\\
\hline
4&$516$&%
{\setlength{\tabcolsep}{0pt}\begin{tabular}{l}
$\overline{u^5}\,=\,256\,\overline{u}^{\,\,5}-480\,\overline{u}^{\,\,3}\,%
\overline{\overline{u}}\,\,+80\,\overline{u}^{\,\,2}\,\overline{\overline{%
\overline{u}}}\,\,+$\\
$\,\,\,\,\,\,\,\,\,\,\,\,\,\,\,\,\,\,\,\,\,\,\,\,\,\,180\,\overline{u}\,%
\overline{\overline{u}}^{\,\,2}-30\,\overline{\overline{u}}\,\,\overline{%
\overline{\overline{u}}}\,\,-5\,\overline{u}\,\,\overline{\overline{%
\overline{\overline{u}}}}$
\end{tabular}}%
\\
\hline
5&$1376$&%
{\setlength{\tabcolsep}{0pt}\begin{tabular}{l}
$\overline{v^5}\,=\,625\,\overline{v}^{\,\,5}-1250\,\overline{v}^{\,\,3}\,%
\overline{\overline{v}}\,\,+250\,\overline{v}^{\,\,2}\,\overline{\overline{%
\overline{v}}}\,\,+$\\
$\,\,\,\,\,\,\,\,\,\,\,\,\,\,\,\,\,500\,\overline{v}\,\,\overline{%
\overline{v}}^{\,\,2}-100\,\overline{\overline{v}}\,\,\overline{\overline{%
\overline{v}}}\,\,-25\,\overline{v}\,\,\overline{\overline{\overline{%
\overline{v}}}}\,\,+\,1\,\overline{\overline{\overline{\overline{%
\overline{v}}}}}$
\end{tabular}}%
\\
\hline
$6$&$3026$&%
{\setlength{\tabcolsep}{0pt}\begin{tabular}{l}
$\,\overline{w^5}\,=\,1296\,\overline{w}^{\,\,5}-2700\,\overline{w}^{\,\,3}\,%
\overline{\overline{w}}\,\,+600\,\overline{w}^{\,\,2}\,\overline{\overline{%
\overline{w}}}\,\,+$\\
$\,\,\,\,\,\,\,\,\,\,\,\,\,\,\,1125\,\overline{w}\,\,\overline{%
\overline{w}}^{\,\,2}-250\,\overline{\overline{w}}\,\,\overline{\overline{%
\overline{w}}}\,\,-75\,\overline{w}\,\,\overline{\overline{\overline{%
\overline{w}}}}\,\,+\,5\,\overline{\overline{\overline{\overline{%
\overline{w}}}}}$
\end{tabular}}%
\\
\hline
\end{tabular}%
\\[1\baselineskip]
\newpage
\noindent
\textbf{Table \ $\overline{\text{GW}}\boldsymbol{.}$deg=6}\\[-0.5\baselineskip]
\begin{tabular}[t]{|l|l|l|}
\hline
$\left|\mathcal{F}_{_n}\right|$&%
{\setlength{\tabcolsep}{0pt}\begin{tabular}{l}
Sum of $\left(+\right)$\\
Coefficients
\end{tabular}}%
&Sextic\\
\hline
2&$50$&$\overline{s^6}\,=\,32\,\overline{s}^{\,\,6}-48\,\overline{s}^{\,\,4}\,%
\overline{\overline{s}}\,\,+18\,\overline{s}^{\,2}\,\,\overline{%
\overline{s}}^{\,\,2}\,-\overline{\overline{s}}^{\,3}$\\
\hline
3&$541$&%
{\setlength{\tabcolsep}{0pt}\begin{tabular}{l}
$\overline{t^6}\,=\,243\,\overline{t}^{\,\,6}-486\,\overline{t}^{\,\,4}\,%
\overline{\overline{t}}\,\,+54\,\overline{t}^{\,3}\,\,\overline{\overline{%
\overline{t}}}\,-$\\
$\,\,\,\,\,\,\,\,\,\,\,\,\,\,\,\,243\,\overline{t}^{\,2}\,\,\overline{%
\overline{t}}^{\,\,2}-36\,\overline{t}\,\,\overline{\overline{t}}\,\,%
\overline{\overline{\overline{t}}}\,\,-18\,\overline{\overline{t}}^{\,\,3}$
\end{tabular}}%
$\,\,$\\
\hline
4&$2725$&%
{\setlength{\tabcolsep}{0pt}\begin{tabular}{l}
$\overline{u^6}\,=\,1024\,\overline{u}^{\,\,6}-2304\,\overline{u}^{\,\,4}\,%
\overline{\overline{u}}\,\,+384\,\overline{u}^{\,3}\,\,\overline{\overline{%
\overline{u}}}\,+$\\
$\,\,\,\,\,\,\,\,\,\,\,\,\,\,\,\,\,\,\,\,\,\,1296\,\overline{u}^{\,2}\,\,%
\overline{\overline{u}}^{\,\,2}\,-288\,\overline{u}\,\,\overline{%
\overline{u}}\,\,\overline{\overline{\overline{u}}}\,\,-$\\
$\,\,\,\,\,\,\,\,\,\,\,\,108\,\overline{\overline{u}}^{\,\,3}-24\,%
\overline{u}^{\,\,2}\,\,\overline{\overline{\overline{\overline{u}}}}\,\,+9\,%
\overline{\overline{u}}\,\,\overline{\overline{\overline{\overline{u}}}}\,\,%
+12\,\overline{\overline{\overline{u}}}^{\,\,2}$
\end{tabular}}%
\\
\hline
5&9251&%
{\setlength{\tabcolsep}{0pt}\begin{tabular}{l}
$\overline{v^6}\,=\,3125\,\overline{v}^{\,\,6}-7500\,\overline{v}^{\,\,4}\,%
\overline{\overline{v}}\,\,+1500\,\overline{v}^{\,3}\,\,\overline{\overline{%
\overline{v}}}\,+$\\
$\,\,\,\,\,\,\,\,\,\,\,\,\,\,4500\,\,\overline{v}^{\,2}\,\,\overline{%
\overline{v}}^{\,\,2}\,-1200\,\overline{v}\,\,\overline{\overline{v}}\,\,%
\overline{\overline{\overline{v}}}\,\,-400\,\overline{\overline{v}}^{\,\,3}-$%
\\
$\,\,\,\,\,\,\,\,\,\,\,\,\,\,\,150\,\overline{v}^{\,\,2}\,\,\overline{%
\overline{\overline{\overline{v}}}}\,\,+60\,\overline{\overline{v}}\,\,%
\overline{\overline{\overline{\overline{v}}}}\,\,+60\,\overline{\overline{%
\overline{v}}}^{\,\,2}\,+6\,\overline{v}\,\overline{\overline{\overline{%
\overline{\overline{v}}}}}$
\end{tabular}}%
\\
\hline
$6$&$24707$&%
{\setlength{\tabcolsep}{0pt}\begin{tabular}{l}
$\overline{w^6}\,=\,7776\,\overline{w}^{\,\,6}-19440\,\overline{w}^{\,\,4}\,%
\overline{\overline{w}}\,\,+4320\,\overline{w}^{\,3}\,\,\overline{\overline{%
\overline{w}}}\,+$\\
$\,\,\,\,\,\,\,\,\,12150\,\,\overline{w}^{\,2}\,\,\overline{\overline{w}}^{\,%
\,2}\,-3600\,\overline{w}\,\,\overline{\overline{w}}\,\,\overline{\overline{%
\overline{w}}}\,\,-\,1125\,\overline{\overline{w}}^{\,\,3}-$\\
$\,\,540\,\overline{w}^{\,\,2}\,\,\overline{\overline{\overline{%
\overline{w}}}}\,\,+225\,\overline{\overline{w}}\,\,\overline{\overline{%
\overline{\overline{w}}}}\,\,+200\,\overline{\overline{\overline{w}}}^{\,\,%
2}+36\,\overline{w}\,\overline{\overline{\overline{\overline{\overline{w}}}}}%
\,\,-\,1\,\overline{\overline{\overline{\overline{\overline{\overline{w}}}}}}$
\end{tabular}}%
\\
\hline
\end{tabular}%
\\[1\baselineskip]
\noindent
\textbf{Table \ $\overline{\text{GW}}\boldsymbol{.}$deg=7}\\[-0.5\baselineskip]
\begin{small}
\begin{tabular}[t]{|l|l|l|}
\hline
$\left|\mathcal{F}_{_n}\right|$&%
{\setlength{\tabcolsep}{0pt}\begin{tabular}{l}
Sum of $\left(+\right)$\\
Coefficients
\end{tabular}}%
&Septic\\
\hline
2&$120$&$\overline{s^7}\,=\,64\,\overline{s}^{\,\,7}\,-\,112\,\overline{s}^{\,%
\,5}\,\overline{\overline{s}}\,+56\,\overline{s}^{\,3}\,\overline{%
\overline{s}}^{\,\,2}\,-\,7\,\overline{s}\,\overline{\overline{s}}^{\,\,3}$\\
\hline
3&$2080$&%
{\setlength{\tabcolsep}{0pt}\begin{tabular}{l}
$\overline{t^7}\,=\,7294\,\overline{t}^{\,\,7}\,-\,1701\,\overline{t}^{\,\,5}%
\,\overline{\overline{t}}\,+189\,\overline{t}^{\,\,4}\,\overline{\overline{%
\overline{t}}}\,+1134\,\overline{t}^{\,\,3}\,\overline{\overline{t}}^{\,\,%
2}-$\\
$\,\,\,\,\,\,\,\,\,\,\,\,\,\,\,\,\,\,\,\,\,\,\,\,\,\,\,\,189\,\overline{t}^{\,%
\,2}\,\,\overline{\overline{t}}\,\,\overline{\overline{\overline{t}}}\,-\,189%
\,\overline{t}\,\,\overline{\overline{t}}^{\,\,3}+7\,\overline{t}\,\,%
\overline{\overline{\overline{t}}}^{\,\,2}+\,21\,\overline{\overline{t}}^{\,\,%
2}\,\,\overline{\overline{\overline{t}}}$
\end{tabular}}%
\\
\hline
4&$14400$&%
{\setlength{\tabcolsep}{0pt}\begin{tabular}{l}
$\overline{u^7}\,=\,4096\,\overline{u}^{\,\,7}\,-\,10752\,\overline{u}^{\,\,5%
\,}\,\overline{\overline{u}}\,+1792\,\overline{u}^{\,\,4}\,\overline{%
\overline{\overline{u}}}\,+$\\
$\,\,\,\,\,\,\,\,\,\,\,\,\,\,\,\,\,\,\,112\cdot
72\,\overline{u}^{\,\,3}\,\overline{\overline{u}}^{\,\,2}-\,2016\,%
\overline{u}^{\,\,2}\,\,\overline{\overline{u}}\,\,\overline{\overline{%
\overline{u}}}\,-1512\,\overline{u}\,\,\overline{\overline{u}}^{\,\,3}+\,$\\
$\,\,\,\,\,\,\,\,\,\,\,\,\,112\,\overline{u}\,\,\overline{\overline{%
\overline{u}}}^{\,\,2}+\,252\,\overline{\overline{u}}^{\,\,2}\,\,\overline{%
\overline{\overline{u}}}-112\,\overline{u}^{\,\,3}\,\overline{\overline{%
\overline{\overline{u}}}}\,\,+\,84\,\overline{u}\,\overline{\overline{u}}\,%
\overline{\overline{\overline{u}}}\,\,-7\,\overline{\overline{\overline{u}}}\,%
\,\overline{\overline{\overline{\overline{u}}}}$
\end{tabular}}%
\\
\hline
5&$62210$&%
{\setlength{\tabcolsep}{0pt}\begin{tabular}{l}
$\overline{v^7}\,=\,15625\,\overline{v}^{\,\,7}\,-\,43750\,\overline{v}^{\,\,%
5\,}\,\overline{\overline{v}}\,+8750\,\overline{v}^{\,\,4}\,\overline{%
\overline{\overline{v}}}\,+35000\,\overline{v}^{\,\,3}\,\overline{%
\overline{v}}^{\,\,2}-\,$\\
$\,\,\,\,\,\,\,\,\,\,\,\,\,\,10,500\,\overline{v}^{\,\,2}\,\,\overline{%
\overline{v}}\,\,\overline{\overline{\overline{v}}}\,-\,7000\,\overline{v}\,\,%
\overline{\overline{v}}^{\,\,3}+700\,\overline{v}\,\,\overline{\overline{%
\overline{v}}}^{\,\,2}+\,1400\,\overline{\overline{v}}^{\,\,2}\,\,\overline{%
\overline{\overline{v}}}-$\\
$\,\,\,\,\,\,\,\,\,\,\,\,\,\,\,\,\,875\,\overline{v}^{\,\,3}\,\overline{%
\overline{\overline{\overline{v}}}}\,\,+\,700\,\overline{v}\,\overline{%
\overline{v}}\,\overline{\overline{\overline{\overline{v}}}}\,\,-70\,%
\overline{\overline{\overline{v}}}\,\,\overline{\overline{\overline{%
\overline{v}}}}\,\,+35\,\overline{v}^{\,\,2}\,\overline{\overline{\overline{%
\overline{\overline{v}}}}}\,\,-\,14\,\overline{\overline{v}}\,\,\overline{%
\overline{\overline{\overline{\overline{v}}}}}$
\end{tabular}}%
\\
\hline
$6$&$201748$&%
{\setlength{\tabcolsep}{0pt}\begin{tabular}{l}
$\overline{w^7}\,=\,46656\,\overline{w}^{\,\,7}\,-\,136080\,\overline{w}^{\,\,%
5\,}\,\overline{\overline{w}}\,+30240\,\overline{w}^{\,\,4}\,\overline{%
\overline{\overline{w}}}\,+$\\
$\,\,\,\,\,\,\,\,\,\,\,\,\,\,\,\,113400\,\overline{w}^{\,\,3}\,\overline{%
\overline{w}}^{\,\,2}-\,37,800\,\overline{w}^{\,\,2}\,\,\overline{%
\overline{w}}\,\,\overline{\overline{\overline{w}}}\,-\,23625\,\overline{w}\,%
\,\overline{\overline{w}}^{\,\,3}+$\\
$\,\,\,\,\,\,\,\,\,\,\,\,\,\,2800\,\overline{w}\,\,\overline{\overline{%
\overline{w}}}^{\,\,2}+\,5250\,\overline{\overline{w}}^{\,\,2}\,\,\overline{%
\overline{\overline{w}}}-3780\,\overline{w}^{\,\,3}\,\overline{\overline{%
\overline{\overline{w}}}}\,+$\\
$\,\,\,\,\,\,\,\,\,\,\,3150\,\overline{w}\,\overline{\overline{w}}\,%
\overline{\overline{\overline{\overline{w}}}}\,\,-350\,\overline{\overline{%
\overline{w}}}\,\,\overline{\overline{\overline{\overline{w}}}}\,\,+252\,%
\overline{w}^{\,\,2}\,\overline{\overline{\overline{\overline{%
\overline{w}}}}}\,\,-\,105\,\overline{\overline{w}}\,\,\overline{\overline{%
\overline{\overline{\overline{w}}}}}\,\,-\,7\,\overline{\overline{w}}\,\,%
\overline{\overline{\overline{\overline{\overline{w}}}}}$
\end{tabular}}%
\\
\hline
\end{tabular}%
\end{small}

\begin{acknowledgment}{Acknowledgments}
thanks to all...
\end{acknowledgment}

\begin{biog}
\item[first author]first author
\begin{affil}
\end{affil}

\item[2nd author]second author
\begin{affil}
\end{affil}

\item[3rd author]third author
\begin{affil}
\end{affil}
\end{biog}

\end{document}